\numberwithin{equation}{section}
\newtheoremstyle{thm}
  {9pt}{9pt}{\itshape}{}{\bfseries}{}{.5em}{}
\theoremstyle{thm}
\newtheorem{thm}{Theorem}[section]
\newtheorem{cor}[thm]{Corollary}
\newtheorem{lemma}[thm]{Lemma}
\newtheoremstyle{defin}
  {9pt}{9pt}{}{}{\bfseries}{}{.5em}{}
\theoremstyle{defin}
\newtheoremstyle{exm}
  {9pt}{9pt}{}{}{\scshape}{}{.5em}{}
\theoremstyle{exm}
\newtheorem{exm}[thm]{Example}
\newtheorem{rmk}[thm]{Remark}
\newtheoremstyle{proof}
  {}{}{}{}{\itshape}{:}{.5em}{}
\theoremstyle{proof}
\newtheorem*{skt}{Sketch of proof}
\newcommand{\set}[1]{\{#1\}}
\newcommand{\C}{{\mathbb C}}
\newcommand{\p}[1]{\mathcal{#1}}
\newcommand{\f}[1]{\mathfrak{#1}}
\newcommand{\un}[1]{\underline{#1}}
\newcommand{\wT}{{\widetilde T}}
\newcommand{\wP}{{\widetilde P}}
\DeclareMathOperator{\tr}{tr}
\DeclareMathOperator{\rw}{rw}
\DeclareMathOperator{\sign}{sign}
\DeclareMathOperator{\weight}{weight}
\DeclareMathOperator{\hght}{ht}
\DeclareMathOperator{\wdth}{wt}
\DeclareMathOperator{\strp}{st}
\DeclareMathOperator{\End}{End}
\author{Matja\v z Konvalinka}
\title[Characters of the Hecke algebra]{On combinatorial formulas for the characters of Hecke algebras}
\begin{document}

\begin{abstract}
 Hecke algebras are beautiful $q$-extensions of Coxeter groups. In this paper, we prove several results on their characters, with an emphasis on characters induced from trivial and sign representations of parabolic subalgebras. While most of the results in type A are known, our proofs are of a combinatorial nature, and generalize to (partial) results in types B and C. We also present complete descriptions of such characters for type I.
\end{abstract}

\maketitle

\section{Introduction} \label{intro}

Let us start with a description of three families of characters of the symmetric group $\f S_n$. For a partition $\lambda \vdash n$, denote the character induced from the trivial character of the Young subgroups $\f S_{\lambda_1} \times \cdots \times \f S_{\lambda_p}$ of $\f S_n$ by $\eta_\lambda$. It is given by
\begin{equation} \label{intro1}
 \eta_\lambda(\pi) = R_{\mu \lambda},
\end{equation}
where
\begin{itemize}
 \item $\mu = (\mu_1,\ldots,\mu_r)$ is the type of the permutation $\pi$ (sequence of lengths of cycles of $\pi$), and
 \item $R_{\mu \lambda}$ is the number of ordered partitions $(B_1,\ldots,B_p)$ of the set $\set{1,\ldots,r}$ such that
 $$\lambda_j = \sum_{i \in B_j} \mu_i \quad \mbox{for} \quad 1 \leq j \leq p.$$
\end{itemize}

\medskip

The sign characters of Young subgroups induce the characters $\set{ \epsilon_\lambda : \lambda \vdash n}$, which are given by
\begin{equation} \label{intro3}
 \epsilon_\lambda(\pi) = \sigma_\mu R_{\mu \lambda},
\end{equation}
where $\sigma_\mu = \sign \pi = (-1)^{j_2+j_4+\ldots}$ for $\mu=\langle 1^{j_1}2^{j_2}\cdots \rangle$. See \cite[\S 7]{StanEC}.

\medskip

Finally, the irreducible characters of the symmetric group have the following combinatorial interpretation. A \emph{border strip} is a connected skew shape with no $2 \times 2$ square. Equivalently, a skew shape $\lambda/\mu$ is a border strip if and only if $\lambda_i = \mu_{i-1} + 1$ for $i \geq 2$. The height $\hght T$ of a border strip $T$ is one less than the number of rows, and the width $\wdth T$ is one less than the number of columns. A \emph{border strip tableau} of shape $\lambda/\mu$ and type $\alpha=(\alpha_1,\ldots,\alpha_p)$ is an assignment of positive integers to the squares of $\lambda/\mu$ such that:
\begin{itemize}
 \item every row and column is weakly increasing,
 \item the integer $i$ appears $\alpha_i$ times, and
 \item the set of squares occupied by $i$ forms a border strip or is empty.
\end{itemize}
The height $\hght$ of a border strip tableau $\p T$ is the sum of the heights of non-empty border strips that appear in $\p T$, and the width $\wdth$ is the sum of the widths of non-empty border strips that appear in $\p T$.

\medskip

For a partition $\lambda$, the irreducible character $\chi_\lambda$ is given by
\begin{equation} \label{intro2}
 \chi_\lambda(\pi) = \sum_{\p T} (-1)^{\hght \p T},
\end{equation}
where $\pi$ is a permutation in $S_n$ of type $\mu$, and $\p T$ runs over all border strip tableaux of shape $\lambda$ and type $\mu$.

\begin{exm} \label{intro4}
 Take $\lambda=(3,2,1)$ and $\pi = 214356$. Then $\eta_\lambda(\pi)=4$, corresponding to ordered partitions
 $$(\set{1,3},\set{2},\set{4}), \qquad (\set{1,4},\set{2},\set{3}),$$
 $$(\set{2,3},\set{1},\set{4}), \qquad (\set{2,4},\set{1},\set{3}).$$
 Since $\sign \pi = 1$, we have $\epsilon_\lambda(\pi) = 4$. Finally, the following are the border strip tableaux of shape $\lambda$ and type $\mu = (2,2,1,1)$.
 $$\begin{array}{ccc} 1 & 1 & 3 \\ 2 & 2 & \\ 4 & & \end{array} \qquad \begin{array}{ccc} 1 & 1 & 4 \\ 2 & 2 & \\ 3 & & \end{array} \qquad \begin{array}{ccc} 1 & 1 & 3 \\ 2 & 4 & \\ 2 & & \end{array} \qquad \begin{array}{ccc} 1 & 1 & 4 \\ 2 & 3 & \\ 2 & & \end{array}$$
 $$\begin{array}{ccc} 1 & 2 & 2 \\ 1 & 3 & \\ 4 & & \end{array} \qquad \begin{array}{ccc} 1 & 2 & 2 \\ 1 & 4 & \\ 3 & & \end{array} \qquad \begin{array}{ccc} 1 & 2 & 3 \\ 1 & 2 & \\ 4 & & \end{array} \qquad \begin{array}{ccc} 1 & 2 & 4 \\ 1 & 2 & \\ 3 & & \end{array}$$
 The first two and the last two tableaux have even height, and the rest have height $1$. This means that $\chi_\lambda(\pi) = 4 - 4 = 0$.
\end{exm}

Beautiful quantizations of the symmetric group and other Coxeter groups are the Hecke algebras, whose characters exhibit an even richer combinatorial structure. The formulas generalizing \eqref{intro2} and (implicitly) \eqref{intro1} were given by Ram \cite{RamFrob} (see also \cite{RamRemmel}).

\medskip

In this paper, we use bijective methods to rederive these combinatorial interpretations for type A, and to find analogous results for other types.

\medskip

The symmetric group $\f S_n$ is generated by transpositions $s_i = (i,i+1)$, $1 \leq i \leq n-1$, which satisfy the relations

\begin{alignat*}{2}
s_i^2 &= 1 &\qquad &\text{for } i=1,\ldots,n-1, \\
s_i s_j s_i &= s_j s_i s_j &\qquad &\text{if } |i-j|=1,\\
s_i s_j &= s_j s_i &\qquad &\text{if } |i-j| \geq 2.
\end{alignat*}

More generally, we can define a Coxeter group as follows. A symmetric matrix $M$ with rows and entries indexed by a finite set $S$ and with entries in $\set{1,2,\ldots,\infty}$ is called Coxeter if the diagonal elements are $1$ and the off-diagonal elements are strictly greater than $1$. The Coxeter group $(W,S)$ is the group generated by $S$ with relations
$$(ss')^{m(s,s')} = 1 \qquad \text{for all } s,s'.$$

\medskip

An expression $w = s_1 s_2\cdots s_k$, $s_k \in S$, is \emph{reduced} if it is the shortest such expression for $w$, and $k$ is called the length $\ell(w)$ of $w$. All reduced expressions contain the same generators, see \cite[Corollary 1.4.8 (ii)]{bb}. Denote by $\rw(w)$ the set of the generators contained in a reduced expression for $w$.

\medskip

Each Coxeter group has a corresponding Coxeter graph; we take one vertex for each generator, draw an edge between $s$ and $s'$ if $m(s,s') \geq 3$, and write $m(s,s')$ above the edge if $m(s,s') \geq 4$. We call a Coxeter group irreducible if its graph is connected. A Coxeter group is a product of irreducible Coxeter groups, and all possible finite irreducible Coxeter groups are well known: apart from a finite number of exceptional cases (which are not interesting for our purposes), there are four infinite families. They are shown in the following figure (the first three graphs have $n$ vertices).

\begin{figure}[ht]
  \begin{center}
   \input{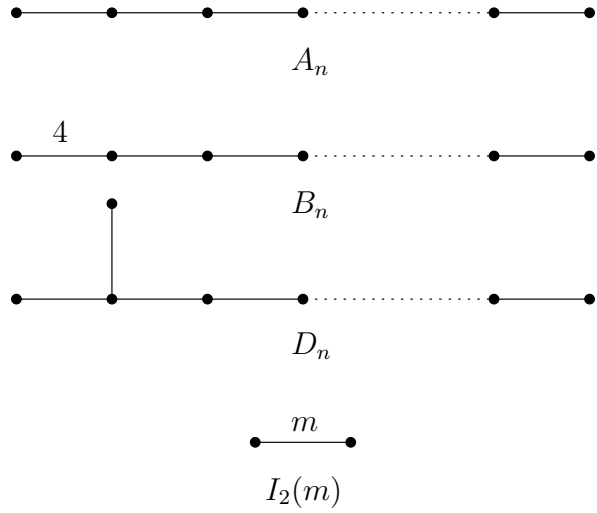}
  \end{center}
  \caption{Infinite families of irreducible Coxeter groups.}
  \label{fig1}
 \end{figure}

See \cite[\S A1]{bb}.

\medskip

For a Coxeter group $(W,S)$, we can define the corresponding Hecke algebra $H_{W,S}$ as follows. It is the $\mathbb{C}[q^{1/2}, q^{-1/2}]$-algebra generated by the set of {\em natural generators}, elements $\set{ T_{s} \colon s \in S}$ or, equivalently, by {\em modified natural generators} $\set{\wT_{s} \colon s \in S}$ with $\widetilde{T}_{s} = q^{-1/2} T_{s}$, subject to the relations
\begin{alignat*}{3}
T_{s}^2 &= (q-1) T_{s} + q   
&\qquad \wT_{s}^2 &= (q^{1/2}-q^{-1/2}) \wT_{s} + 1,    
&\qquad &\text{for } s \in S, \\
\underbrace{T_s T_{s'} T_s \cdots}_{m(s,s')} & = \underbrace{T_{s'} T_s T_{s'} \cdots}_{m(s,s')}
&\qquad \underbrace{\wT_s \wT_{s'} \wT_s \cdots}_{m(s,s')} & = \underbrace{\wT_{s'} \wT_s \wT_{s'} \cdots}_{m(s,s')}
&\qquad &\text{for }  s,s' \in S, s \neq s'.
\end{alignat*}

If $s_{i_1} \cdots s_{i_\ell}$ is a reduced expression for $w$ of length $\ell(w) = \ell$, we define
$$T_w = T_{s_{i_1}} \cdots T_{s_{i_\ell}}, \qquad \widetilde T_w = q^{-\ell/2}T_w = \widetilde T_{s_{i_1}} \cdots \widetilde T_{s_{i_\ell}}.$$

This is well defined (say, by Matsumoto's theorem, see \cite[Theorem 1.2.2]{gp}).

\medskip

For a Coxeter group $(W,S)$ and a subset $J$ of $S$, call the subgroup $W_J$ generated by $J$ a \emph{parabolic} subgroup (also a \emph{Young} subgroup when $W$ is the symmetric group). The subalgebra generated by $\set{T_s \colon s \in J}$ is called a parabolic (or Young) subalgebra.

\medskip

Let $\p A$ be an algebra over $\C$. A left $\p A$-module $\p V$ is called a \emph{representation} of $\p A$. We can also think of an algebra representation as an algebra homomorphism $\varphi = \varphi_{\p V} \colon \p A \to \End(\p V)$, where $\End(\p V)$ is the algebra of endomorphisms of $\p V$.

\begin{exm}
 Let us prove that the linear map $\eta \colon H_{(W,S)} \to \C$ defined by $\eta(\wT_w) = q^{\ell(w)/2}$ is a representation, i.e.\hspace{-0.07cm} that we have $\eta(\wT_w \wT_v) = q^{(\ell(w) + \ell(v))/2}$ for all $w,v$. This is obviously true if $v = e$, assume that it holds for all $w,v$ with $\ell(v) = k-1$, and assume $\ell(v) = k$. We have $v = s v'$ for some $s \in S$, $\ell(v') = k-1$. If $\ell (w s) = \ell(w) + 1$, then
 $$\eta(\wT_w \wT_v) = \eta(\wT_w \wT_{s} \wT_{v'}) = \eta(\wT_{ws} \wT_{v'}) = q^{(\ell(w s) + \ell(v'))/2} = q^{(\ell(w) + \ell(v))/2},$$
 and if $\ell (w s) = \ell(w) - 1$, then
 $$\eta(\wT_w \wT_v) = \eta(\wT_w \wT_{s} \wT_{v'}) = \eta((\wT_{ws}+(q^{1/2}-q^{-1/2})\wT_w) \wT_{v'}) = $$
 $$= \eta(\wT_{ws} \wT_{v'}) + (q^{1/2}-q^{-1/2}) \eta(\wT_w \wT_{v'}) = q^{(\ell(ws) + \ell(v'))/2} + (q^{1/2}-q^{-1/2}) q^{(\ell(w) + \ell(v'))/2} = $$
 $$= q^{(\ell(w) + \ell(v))/2 - 1} + (q^{1/2}-q^{-1/2}) q^{(\ell(w) + \ell(v) - 1)/2} = q^{(\ell(w) + \ell(v))/2}.$$
 This representation is called \emph{trivial}. We can similarly prove that $\epsilon \colon H_m(q) \to \C$, defined by $\epsilon(\wT_w) = (-q^{-1/2})^{\ell(w)}$, is a representation, we call it the \emph{sign representation}.
\end{exm}

For an $\p A$-module $\p V$ which is finitely generated and free over $\C$, the \emph{character} of $\p V$ is the linear map $\chi_{\p V} \colon \p A \longrightarrow \C$, $a \mapsto \tr(\varphi_{\p V}(a))$.

\medskip

First note the following. For a representation $\varphi$ and the corresponding character $\varphi$, we have
\begin{equation} \label{intro6}
 \chi(ab) = \tr(\varphi(ab))=\tr(\varphi(a)\varphi(b)) = \tr(\varphi(b)\varphi(a))=\tr(\varphi(ba))=\chi(ba).
\end{equation}
for every $a,b \in \p A$.

\medskip

The algebra $H_{(W,S)}$ is finitely generated and free (see \cite[\S 4.4]{gp}). Therefore it makes sense to talk about characters of its representations.

\medskip

Equation \eqref{intro6} implies the following relation for characters of Hecke algebras.

\begin{thm} \label{intro7}
 Take $w \in W$, $s \in S$, and a character $\chi$ of $H_{(W,S)}$. Then:
 \begin{itemize}
  \item if $\ell(sws) = \ell(w)$, then $\chi(\wT_{sws}) = \chi(\wT_w)$;
  \item if $\ell(sws) = \ell(w)+2$, then $\chi(\wT_{sws}) = \chi(\wT_w) + (q^{1/2}-q^{-1/2}) \chi(\wT_{sw}) = \chi(\wT_w) + (q^{1/2}-q^{-1/2}) \chi(\wT_{ws})$;
  \item if $\ell(sws) = \ell(w)-2$, then $\chi(\wT_{sws}) = \chi(\wT_w) - (q^{1/2}-q^{-1/2}) \chi(\wT_{sw}) = \chi(\wT_w) - (q^{1/2}-q^{-1/2}) \chi(\wT_{ws})$;
 \end{itemize}
\end{thm}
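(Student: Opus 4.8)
The plan is to reduce everything to the trace identity \eqref{intro6}, $\chi(ab)=\chi(ba)$, together with the quadratic relation $\wT_s^2=(q^{1/2}-q^{-1/2})\wT_s+1$ (which in particular makes $\wT_s$ invertible, with $\wT_s^{-1}=\wT_s-(q^{1/2}-q^{-1/2})$) and the elementary multiplication rules: $\wT_w\wT_s=\wT_{ws}$ if $\ell(ws)=\ell(w)+1$, and $\wT_w\wT_s=\wT_{ws}+(q^{1/2}-q^{-1/2})\wT_w$ if $\ell(ws)=\ell(w)-1$, with the left-sided analogues; these follow at once from the definition of $\wT_w$ on reduced words and the quadratic relation. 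I will also use the standard fact that $\ell(sv)$ and $\ell(vs)$ differ from $\ell(v)$ by exactly $1$ for $s\in S$, so that $\ell(sws)-\ell(w)\in\{-2,0,2\}$ and these are all the cases.

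First I would treat $\ell(sws)=\ell(w)+2$. A short length computation shows this forces $\ell(sw)=\ell(ws)=\ell(w)+1$ and $\ell(sws)=\ell(sw)+1=\ell(ws)+1$, so $\wT_{sws}=\wT_s\wT_w\wT_s$ with $\wT_s\wT_w=\wT_{sw}$ and $\wT_w\wT_s=\wT_{ws}$. Cycling one factor $\wT_s$ via \eqref{intro6} and expanding with the quadratic relation gives
\[
\chi(\wT_{sws})=\chi(\wT_w\wT_s^2)=(q^{1/2}-q^{-1/2})\chi(\wT_w\wT_s)+\chi(\wT_w)=\chi(\wT_w)+(q^{1/2}-q^{-1/2})\chi(\wT_{ws}),
\]
and cycling the other factor produces the $\chi(\wT_{sw})$ version. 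The case $\ell(sws)=\ell(w)-2$ then follows by applying the previous case to $w'=sws$ (so $sw's=w$ and $\ell(sw's)=\ell(w')+2$) and solving for $\chi(\wT_{sws})$.

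The case $\ell(sws)=\ell(w)$ is where a little care is needed: as one already sees in the dihedral group, $sws$ need not equal $w$, and there is no uniform formula expressing $\wT_{sws}$ through $\wT_w$, so I would split on whether $s$ is a left descent of $w$. If $\ell(sw)=\ell(w)-1$, then $\wT_w=\wT_s\wT_{sw}$, while $\ell(sws)=\ell(sw)+1$ forces $\wT_{sws}=\wT_{sw}\wT_s$, hence $\wT_{sws}=\wT_s^{-1}\wT_w\wT_s$. If instead $\ell(sw)=\ell(w)+1$, then $\wT_{sw}=\wT_s\wT_w$, while $\ell(sws)=\ell(sw)-1$ forces $\wT_{sw}=\wT_{sws}\wT_s$, hence $\wT_{sws}=\wT_s\wT_w\wT_s^{-1}$. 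In either branch $\wT_{sws}$ is a conjugate of $\wT_w$, and \eqref{intro6} gives $\chi(\wT_s^{\pm1}\wT_w\wT_s^{\mp1})=\chi(\wT_w)$, which finishes the proof.

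The main obstacle --- really the only non-mechanical point --- is the $\ell(sws)=\ell(w)$ case: recognizing that it must be handled through the left-descent dichotomy rather than a single manipulation, and verifying that in each branch the identity $\ell(sws)=\ell(sw)\pm1$ (matched against the assumed value) forces exactly the multiplication rule, and hence the conjugation identity, that one needs.
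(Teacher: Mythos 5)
Your proof is correct and follows essentially the same route as the paper: the same case analysis, with the quadratic relation and the trace identity \eqref{intro6} doing all the work. The only cosmetic difference is in the case $\ell(sws)=\ell(w)$, where you package the argument as conjugation by the invertible element $\wT_s$ (using $\wT_s^{-1}=\wT_s-(q^{1/2}-q^{-1/2})$), whereas the paper expands $\chi(\wT_s\wT_w\wT_s)$ in two ways and cancels the common term $(q^{1/2}-q^{-1/2})\chi(\wT_w\wT_s)$ --- these are the same calculation.
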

\begin{proof}
 Assume that $\ell(sw)=\ell(w) - 1$ and $\ell(sws) = \ell(w)$. Then
 $$\chi(\wT_s \wT_w \wT_s) = \chi((\wT_{sw} + (q^{1/2}-q^{-1/2})\wT_w) \wT_s) =\chi(\wT_{sws}) + (q^{1/2}-q^{-1/2}) \chi(\wT_w \wT_s)$$
 and, by \eqref{intro6},
 $$\chi(\wT_s \wT_w \wT_s) = \chi(\wT_w \wT_s \wT_s) = \chi(\wT_w (1 + (q^{1/2}-q^{-1/2}) \wT_s)) = \chi(\wT_w) + (q^{1/2}-q^{-1/2}) \chi(\wT_w \wT_s),$$
 so $\chi(\wT_{sws}) = \chi(\wT_w)$. If $\ell(sw)=\ell(w) + 1$ and $\ell(sws) = \ell(w)$, then
 $$\chi(\wT_s \wT_w \wT_s) = \chi(\wT_{sw} \wT_s) =\chi(\wT_{sws}) + (q^{1/2}-q^{-1/2}) \chi(\wT_{sw})$$
 and, by \eqref{intro6},
 $$\chi(\wT_s \wT_w \wT_s) = \chi(\wT_s \wT_s \wT_w) = \chi((1 + (q^{1/2}-q^{-1/2}) \wT_s) \wT_w) = \chi(\wT_w) + (q^{1/2}-q^{-1/2}) \chi(\wT_{sw}).$$
 This proves (a). Let us prove (b). If $\ell(sws) = \ell(w)+2$, then $\ell(sw) = \ell(ws) = \ell(w)+1$, and so
 $$\chi(\wT_s \wT_w \wT_s) = \chi(\wT_{sws}) = \chi(\wT_w \wT_s \wT_s) = $$
 $$= \chi(\wT_w (1 + (q^{1/2}-q^{-1/2}) \wT_s)) = \chi(\wT_w) + (q^{1/2}-q^{-1/2}) \chi(\wT_{ws}),$$
 and since $\chi(\wT_s \wT_w) = \chi(\wT_w \wT_s)$, we have $\chi(\wT_{sws}) = \chi(\wT_w) + (q^{1/2}-q^{-1/2}) \chi(\wT_{sw})$. Swapping the roles of $w$ and $sws$, we get (c) from (b).
\end{proof}

Choose $J \subseteq S$, and say that we are given a representation of $H_J$, i.e.\hspace{-0.07cm} a $H_J$-module $\p V$. The tensor product $H_{(W,S)} \otimes_{H_J} \p V$ is naturally an $H_{(W,S)}$-module with the action $h'(h \otimes v) = h'h \otimes v$. This is the \emph{induced representation}. See \cite[page 287]{gp}.

\medskip

Define
$$X_J = \set{x \in W \colon \ell(xs) > \ell(x) \mbox{ for all } s \in J}.$$
Every $x \in X_J$ is the unique element of minimal length in the coset $x W_J$ of $W$. Furthermore, for every $w \in W$, there exist unique $x \in X_J$ and $v \in W_J$ satisfying $w = xv$; we also have $\ell(xv) = \ell(vx^{-1}) = \ell(x) + \ell(v)$. See \cite[Proposition 2.1.1]{gp}.

\medskip

We will find interpretations of characters induced from characters of parabolic subalgebras of Hecke algebras of all infinite families of irreducible Coxeter groups, evaluated on certain elements of Hecke algebras. The proofs we give are completely bijective. We will also give a bijective proof of formulas for the irreducible characters of the Hecke algebra of type A. There, we need a quantization of the Murnaghan-Nakayama rule, Lemma \ref{a5} -- for which we find a proof with a natural bijection and a natural involution -- and the following fact.

\medskip

It is well known that the inverse Kostka numbers $K_{\mu,\lambda}^{-1}$, defined as the coefficient of the complete symmetric function $h_\mu$ in Schur function $s_\lambda$, describe the expansions of irreducible $\f S_n$ characters in terms of induced sign and trivial characters of $\f S_n$, i.e.
$$\chi_\lambda = \sum_{\mu} K_{\mu,\lambda}^{-1} \eta_\mu = \sum_{\mu} K_{\mu,\lambda'}^{-1} \epsilon_\mu.$$
Somewhat surprisingly, these numbers also describe the expansions of irreducible $H_n^A$ characters in terms of induced sign and trivial characters of $H_n^A$ in terms of irreducible $H_n^A$ characters; no ``quantum analogue'' of inverse Kostka numbers is needed for this purpose. See \cite[Sec.\,9.1.9]{gp}. In other words, we have
\begin{equation} \label{intro5}
 \chi_\lambda = \sum_{\mu} K_{\mu,\lambda}^{-1} \eta_\mu = \sum_{\mu} K_{\mu,\lambda'}^{-1} \epsilon_\mu,
\end{equation}
where $\chi_\lambda$ is the irreducible character of $H_n^A$ corresponding to $\lambda$, $\eta_\mu$ is the character induced from the trivial character on the Young subalgebra $H_\mu$, and $\epsilon_\mu$ is the character induced from the sign character on the Young subalgebra $H_\mu$.

\medskip

Throughout the paper, we will write
$$R = q^{1/2} - q^{-1/2}.$$

\section{Main results and examples} \label{theorems}

The basis for our computations is the following lemma.

\begin{lemma} \label{thm6}
 Take a Hecke algebra $H_{(W,S)}$, subset $J \subseteq S$, character $\chi_0$ on the subalgebra $H_J$, and an element $w \in W$. Furthermore, denote by $\chi$ the character on $H_{(W,S)}$ induced from $\chi_0$. Then
 $$\chi(T_w) = \sum_{x \in X_J} \sum_{u \in W_J} \chi_0(T_u) [T_{xu}] T_w T_x.$$
\end{lemma}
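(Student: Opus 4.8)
The plan is to compute the trace of $\varphi(T_w)$ on the induced module $\p V = H_{(W,S)} \otimes_{H_J} \p V_0$ by choosing a convenient basis and reading off the diagonal entries. By the coset decomposition $W = \bigsqcup_{x \in X_J} xW_J$ recalled above, and since $\ell(xv) = \ell(x) + \ell(v)$ for $x \in X_J$, $v \in W_J$, we have $T_{xv} = T_x T_v$; hence, if $\set{v_\beta}$ is a $\C[q^{1/2},q^{-1/2}]$-basis of $H_J$ consisting of the $T_v$, $v \in W_J$, and $\set{e_\gamma}$ is a basis of $\p V_0$, the elements $T_x \otimes e_\gamma$ (equivalently $T_x T_v \otimes e_\gamma$ modulo the $H_J$-action, but it is cleanest to tensor the basis of $\p V_0$ directly) form a basis of $\p V$ indexed by $X_J$ together with a basis of $\p V_0$. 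So first I would fix this basis and note that for the \emph{character} we do not actually need an explicit basis of $\p V_0$: the trace over the $\p V_0$-factor of the relevant operator will collapse to $\chi_0$ applied to an element of $H_J$.

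Next I would expand $T_w T_x$ in the $T$-basis of $H_{(W,S)}$: write $T_w T_x = \sum_{y \in W} c_y T_y$ where $c_y = [T_y] T_w T_x \in \C[q^{1/2},q^{-1/2}]$ is the notation used in the statement. Then in the induced module,
\[
 T_w \cdot (T_x \otimes e_\gamma) = (T_w T_x) \otimes e_\gamma = \sum_{y \in W} c_y\, T_y \otimes e_\gamma .
\]
For each $y$, use the unique factorization $y = x' u$ with $x' \in X_J$, $u \in W_J$, so $T_y = T_{x'} T_u$ and
\[
 T_y \otimes e_\gamma = T_{x'} T_u \otimes e_\gamma = T_{x'} \otimes (T_u \cdot e_\gamma) = T_{x'} \otimes (\varphi_0(T_u) e_\gamma),
\]
moving the $H_J$-part across the tensor. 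Re-indexing the sum by $x' \in X_J$ and $u \in W_J$ (so $y = x'u$ and $c_y = [T_{x'u}] T_w T_x$), we get
\[
 T_w \cdot (T_x \otimes e_\gamma) = \sum_{x' \in X_J} \sum_{u \in W_J} \bigl([T_{x'u}] T_w T_x\bigr)\, T_{x'} \otimes (\varphi_0(T_u) e_\gamma).
\]

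Finally I would extract the diagonal. The coefficient of the basis vector $T_x \otimes e_\gamma$ in the above is obtained by taking only $x' = x$ and the $e_\gamma$-component of $\varphi_0(T_u) e_\gamma$; summing over the basis $\set{e_\gamma}$ of $\p V_0$ turns $\sum_\gamma (e_\gamma\text{-component of }\varphi_0(T_u)e_\gamma)$ into $\tr \varphi_0(T_u) = \chi_0(T_u)$. Therefore
\[
 \chi(T_w) = \sum_{x \in X_J} \sum_{u \in W_J} \bigl([T_{xu}] T_w T_x\bigr)\, \chi_0(T_u),
\]
which is the claimed identity. The only genuinely delicate point is the justification that one may slide $T_u$ ($u \in W_J$) from the left tensor factor across $\otimes_{H_J}$ to act on $\p V_0$ — this is exactly the defining relation of the tensor product over $H_J$ together with $T_{x'u} = T_{x'}T_u$ from the length-additivity $\ell(x'u) = \ell(x') + \ell(u)$ for $x' \in X_J$; everything else is bookkeeping with the $T$-basis, which is free over $\C[q^{1/2},q^{-1/2}]$. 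I would also remark that the double sum is really finite (only finitely many $u$ contribute for given $x$, since $T_wT_x$ is a finite combination of $T_y$), so no convergence issue arises.
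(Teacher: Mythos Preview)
Your proof is correct and follows essentially the same approach as the paper: both pick the basis $\{T_x \otimes e_\gamma : x \in X_J,\ \gamma\}$ of the induced module, expand $T_w T_x$ in the $T$-basis, factor each $T_y$ as $T_{x'}T_u$ via the coset decomposition and length-additivity, slide $T_u$ across the tensor, and read off the diagonal to obtain $\chi_0(T_u)$. Your write-up is slightly more explicit about the justification for $T_{x'u}=T_{x'}T_u$ and the finiteness of the sum, but the argument is the same.
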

\begin{proof}
 Assume that $\chi_0$ is the character corresponding to the $H_J$-module $\p V$. Assume that $\set{v_1,\ldots,v_k}$ is a basis of $\p V$. Write $w = x u$ for $x\in X_J$ and $u \in S_J$. Then $T_w \otimes v \in H_{(W,S)} \otimes_{H_J} \p V$ can be expressed as
 $$T_x T_u \otimes v = T_x \otimes T_u v = T_x \otimes \sum_i c_i v_i = \sum_i c_i (T_x \otimes v_i).$$
 Therefore, the basis of $H_{(W,S)} \otimes_{H_J} \p V$ is $\set{T_x \otimes v_i \colon x\in X_J,i=1,\ldots,k}$. For a basis element $T_x \otimes v_i$, we have
 $$T_w \left( T_x \otimes v_i \right) = T_w T_x \otimes v_i = \sum_{z \in W} c_{w,x}^z T_z \otimes v_i = \sum_{y \in X_J} \sum_{u \in S_J} c_{w,x}^{yu} T_y T_u \otimes v_i = \sum_{y \in X_J} \sum_{u \in S_J} c_{w,x}^{yu} T_y  \otimes T_u v_i,$$
 where $c_{w,x}^z = [T_z] T_w T_x$. Therefore
 $$\sum_{i=1}^k [T_x \otimes v_i] T_w \left( T_x \otimes v_i \right) = \sum_{u \in S_J} c_{w,x}^{xu} \chi_0(T_u),$$
 and
 $$\chi(T_w) = \sum_{x \in X_J} \sum_{i=1}^k [T_x \otimes v_i] T_w \left( T_x \otimes v_i \right) = \sum_{x \in X_J} \sum_{u \in W_J} \chi_0(T_u) [T_{xu}] T_w T_x.$$
 This completes the proof.
\end{proof}

The same formula holds for the basis $\wT_w$.

\medskip

Throughout the following sections, we will use certain well-known facts about Coxeter groups of type A, B, D and I. The book \cite{bb} is a good reference for these results.

\medskip

The Coxeter group $A_{n-1}$ can be represented as the symmetric group $\f S_n$, consisting of permutations of the set $\set{1,\ldots,n}$. We write permutations either in one-line notation or in cycle notation. The group of generators is $S_n^A = \set{s_1,\ldots,s_{n-1}}$, where $s_i$ is the simple transposition $(i,i+1)$; we have $s_i^2=1$, $s_is_{i+1}s_i = s_{i+1}s_is_{i+1}$, $s_i s_j = s_j s_i$ for $|i-j|\geq 2$. See Figure \ref{fig1}. We multiply permutations from right to left, and denote the corresponding Hecke algebra by $H_n^A$.

\medskip

There is a natural correspondence between parabolic subgroups of $\f S_n$ and compositions of $n$. For $J \subseteq S_n^A$, draw $n$ dots, and place a bar between dots $i$ and $i+1$ if and only if $s_i \notin J$, and assign a composition to dots and bars in the standard fashion. For example, the composition corresponding to $J=\set{s_1,s_2,s_4,s_7}$ in $\f S_8$ is $3212$, and the generating set of the parabolic subgroup corresponding to $1421$ is $\set{s_2,s_3,s_4,s_6}$.

\medskip

We call an element of $\f S_n$ \emph{parabolic of type $K$} if it is a product of elements of $K \subseteq S_n^A$ (in any order). For a composition $\lambda = (\lambda_1,\ldots,\lambda_p) \vdash n$ corresponding to $J \subseteq S_n^A$, denote the (parabolic) subgroup $\f S_{\lambda_1} \times \cdots \times \f S_{\lambda_p}$ by $\f S_J$ or $\f S_\lambda$. The corresponding parabolic subalgebra will be denoted by $H_J$ or $H_\lambda$. Two parabolic elements of the same type are conjugate and of the same length.

\medskip

The following theorem was proved by Ram \cite{RamFrob}.

\begin{thm}[induction in $H_n^A$] \label{thm1}
 Say we are given subsets $J,K$ of $S_n^A$. Denote by $\lambda = (\lambda_1,\ldots,\lambda_p) \vdash n$ the composition corresponding to $J$, and by $\eta_\lambda$ (respectively, $\epsilon_\lambda$) the character of the representation of $H_n^A$ induced from the trivial (respectively, sign) representation of the parabolic subalgebra $H_J$. For a parabolic element $w$ of type $K$, we have
 $$\eta_\lambda(T_{w}) = \sum_a q^{e_K(a)} (q-1)^{d_K(a)}$$
 and
 $$\epsilon_\lambda(T_{w}) = \sum_a (-1)^{e_K(a)} (q-1)^{d_K(a)},$$
 where the sums are over all integer sequences $a = a_1 a_2 \cdots a_n$ satisfying
 \begin{enumerate}
  \item $1 \leq a_i \leq p$ for $i = 1,\ldots, n$,
  \item $\#\set{i \colon a_i = k} = \lambda_k$ for $k = 1,\ldots,p$,
  \item if $s_i \in K$, then $a_i \geq a_{i+1}$, for $i = 1,\ldots,n-1$,
 \end{enumerate}
 and where
 \begin{itemize}
  \item $d_K(a)$ is the number of elements in the set $\set{i \colon s_i \in K, a_i > a_{i+1}}$,
  \item $e_K(a)$ is the number of elements in the set $\set{i \colon s_i \in K, a_i = a_{i+1}}$.
 \end{itemize}
\end{thm}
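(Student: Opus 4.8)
The plan is to apply Lemma~\ref{thm6} with $\chi_0$ the trivial (resp.\ sign) character of $H_J$, for which $\chi_0(T_u) = q^{\ell(u)}$ (resp.\ $(-1)^{\ell(u)}$), and then to evaluate the resulting structure constants by working inside the induced module $M$ rather than in the full Hecke algebra. Concretely, identify the basis $\{T_x \otimes 1 : x \in X_J\}$ of $M$ with the words $b = b_1 \cdots b_n$ in the alphabet $\{1,\dots,p\}$ in which $k$ occurs $\lambda_k$ times (set $b_i$ to be the index of the block of $\lambda$ containing $x^{-1}(i)$). Standard facts about parabolic cosets (as in \cite[Proposition~2.1.1]{gp}) then give the action: $T_{s_i}[b] = [s_i b]$ if $b_i < b_{i+1}$; $T_{s_i}[b] = q\,[b]$ (resp.\ $-[b]$) if $b_i = b_{i+1}$; and $T_{s_i}[b] = q\,[s_i b] + (q-1)\,[b]$ if $b_i > b_{i+1}$, where $s_i b$ swaps entries $i, i+1$ of $b$. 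These three cases are exactly $\ell(s_i x) > \ell(x)$ with $s_i x \in X_J$, $\ell(s_i x) > \ell(x)$ with $s_i x = x s_k$ for some $s_k \in J$, and $\ell(s_i x) < \ell(x)$, respectively. With this model in place, $\eta_\lambda(T_w)$ (resp.\ $\epsilon_\lambda(T_w)$) is the trace of $T_w$ on $M$, i.e.\ $\sum_b \bigl([b]\text{-coefficient of }T_w[b]\bigr)$.

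Next I would reduce to a convenient representative. Since all parabolic elements of type $K$ are conjugate and of the same length, they are linked by length-preserving moves (commutations grouping the generators block by block, then cyclic shifts within each block, since all Coxeter elements of a given parabolic subgroup are cyclically shift-equivalent), so by Theorem~\ref{intro7} the value $\chi(T_w)$ depends only on $K$; I may therefore take $w = \prod_{s_i \in K} s_i$ with the factors ordered by increasing index. Writing $K = C_1 \sqcup \cdots \sqcup C_r$ for its connected components (intervals of consecutive indices) gives $T_w = T_{C_1} \cdots T_{C_r}$, where $T_{C_l}$ is the product of the $T_{s_i}$, $s_i \in C_l$, ordered by decreasing index, and where $T_{C_l}$ moves only the entries in the block $P_l$ of positions touched by $C_l$. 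As the $P_l$ are pairwise disjoint, the $[b]$-coefficient of $T_w[b]$ is the product over $l$ of the $[b|_{P_l}]$-coefficients of $T_{C_l}[b|_{P_l}]$, so it suffices to analyze one block.

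The crux is the following observation about a single block $C = \{s_a, s_{a+1}, \dots, s_{a+k}\}$: applying $T_{s_{a+k}}, T_{s_{a+k-1}}, \dots, T_{s_a}$ to $[b]$ in this order and expanding repeatedly via the rules above produces a sum over ``histories'' in which, at each step, one either \emph{stays} (weight $q$ or $-1$ if the two relevant entries are equal, weight $q-1$ if they are strictly decreasing) or \emph{swaps} two adjacent entries (weight $q$ or $1$). The only history that returns the entries of $b$ in positions $P_l$ to themselves is the all-stay history: once a genuine swap is performed at position $j$, entry $j+1$ receives the value $b_j \ne b_{j+1}$ and is never touched by the remaining steps $T_{s_{j-1}}, \dots, T_{s_a}$, so the final word differs from $b$. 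The all-stay history exists precisely when $b_j \ge b_{j+1}$ for all $j$ with $s_j \in C$, and it contributes $q$ (resp.\ $-1$) for each such $j$ with $b_j = b_{j+1}$ and $q-1$ for each with $b_j > b_{j+1}$. Taking the product over blocks, the $[b]$-coefficient of $T_w[b]$ equals $q^{e_K(b)}(q-1)^{d_K(b)}$ (resp.\ $(-1)^{e_K(b)}(q-1)^{d_K(b)}$) when $b_j \ge b_{j+1}$ whenever $s_j \in K$, and $0$ otherwise; summing over the words $b$ of the prescribed content — which are precisely the sequences $a$ satisfying (1)--(3) — yields both formulas. The step I expect to require the most care is this ``no nontrivial history survives'' claim together with the accompanying weight bookkeeping; the module description and the reduction to a standard representative are routine applications of parabolic-coset combinatorics and Theorem~\ref{intro7}.
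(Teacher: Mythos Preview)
Your proposal is correct, but it follows a genuinely different path from the paper's proof.

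The paper never fixes a particular ordering of the generators in $w$. Instead, for each $x\in X_J$ it introduces the projection $\wP_{x,J}$ onto the span of $\{\wT_v:v\sim_J x\}$, proves a coset lemma (Lemma~\ref{a1}: if $x\not\sim_J sx$ and $vsx\sim_J x$ then $s\in\rw(v)$), and deduces from it the recursion $\wP_{x,J}(\wT_{w'sw''}\wT_x)=\wP_{x,J}(\wT_{w'}\wP_{x,J}(\wT_s\wP_{x,J}(\wT_{w''}\wT_x)))$. Peeling generators of $w$ off one at a time (valid for \emph{any} ordering since $w$ is square-free) then yields $\wP_{x,J}(\wT_w\wT_x)=R^{d_K(a)}\wT_{w_{x,J}x}$, from which the character value follows. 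Your route, by contrast, invokes Theorem~\ref{intro7} to reduce to the specific representative with increasing indices, realizes the induced module concretely as the word module, and tracks ``histories'' block by block, showing that the only history returning to $[b]$ is the all-stay one (because after applying $T_{s_j}$, position $j+1$ is frozen with value $b_j$ if a swap occurred). Both computations are really computing the same diagonal entries of $T_w$ on $M$, just organized differently.

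What each approach buys: the paper's projection\,/\,coset-lemma mechanism is ordering-independent and transplants verbatim to types $B$ and $D$ (Lemma~\ref{bd1} is the direct analogue of Lemma~\ref{a1}), which is the whole point of the paper. Your history argument is more elementary and self-contained in type $A$, but it depends on the chosen ordering (so that position $j+1$ is never revisited after step $j$); generalizing to $B$ or $D$ would require reworking that mechanism around the extra generator $t$. One cosmetic slip: you write that $T_{C_l}$ is ``ordered by decreasing index'' immediately after taking $w$ in increasing order; your subsequent computation (apply $T_{s_{a+k}}$ first, then $T_{s_{a+k-1}}$, \ldots) is correct for the increasing-order product, so this is only a wording issue.
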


Define a \emph{broken border strip} to be a (not necessarily connected) skew shape with no $2 \times 2$ square. Equivalently, a skew shape $\lambda/\mu$ is a broken border strip if and only if $\lambda_i \leq \mu_{i-1} + 1$ for $i \geq 2$. A broken border strip $T$ is a union of a finite number, $\strp(T)$, of border strips. Denote by $\hght(T)$ the sum of heights of these border strips and by $\wdth(T)$ the sum of their widths.

\medskip

A broken border strip tableau of shape $\lambda/\mu$ and type $\alpha=(\alpha_1,\ldots,\alpha_p)$ is an assignment of positive integers to the squares of $\lambda/\mu$ such that:
\begin{itemize}
 \item every row and column is weakly increasing,
 \item the integer $i$ appears $\alpha_i$ times, and
 \item the set of squares occupied by $i$ forms a broken border strip or is empty.
\end{itemize}
The weight of a broken border strip $T$ is
$$\weight T = (-1)^{\hght(T)} q^{\wdth(T)} (q-1)^{\strp(T) - 1},$$
and the weight $\weight \p T$ of a broken border strip tableau $\p T$ is the product of weights of its non-empty broken border strips.

\begin{thm} \label{thm5}
 For a parabolic element $w \in \f S_n$ of type $K$, denote by $\mu$ the corresponding composition of $n$. Denote the irreducible character of $H_n^A$ corresponding to $\lambda$ by $\chi_\lambda$. We have
 $$\chi_\lambda(T_{w}) = \sum_{\p T} \weight \p T,$$
 where the sum is over all broken border strip tableaux $\p T$ of shape $\lambda$ and type $\mu$.
\end{thm}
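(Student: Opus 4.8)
The plan is to deduce Theorem~\ref{thm5} from the two facts isolated in the introduction — the inverse Kostka expansion \eqref{intro5} and Ram's formula for induced characters (Theorem~\ref{thm1}) — together with one bijective lemma (the ``quantum Murnaghan--Nakayama rule''). Write $K$ for the type of the parabolic element $w$, so that $\mu$ is the composition attached to $K$, and let $\nu$ range over partitions of $n$. Because the Kostka matrix is unitriangular, \eqref{intro5} is equivalent to the dual relation $\eta_\nu = \sum_\lambda K_{\lambda\nu}\,\chi_\lambda$, where $K_{\lambda\nu}$ is the number of semistandard tableaux of shape $\lambda$ and content $\nu$. Hence Theorem~\ref{thm5}, asserting $\chi_\lambda(T_w) = \sum_{\p T}\weight \p T$ for every $\lambda$, is equivalent — again by invertibility of the Kostka matrix — to the single identity
\begin{equation}\tag{$\star$}
\sum_{\lambda} K_{\lambda\nu}\sum_{\p T}\weight \p T \;=\; \eta_\nu(T_w) \;=\; \sum_{a} q^{e_K(a)}(q-1)^{d_K(a)}
\end{equation}
holding for all $\nu$, where $\p T$ runs over broken border strip tableaux of shape $\lambda$ and type $\mu$, and, by Theorem~\ref{thm1}, $a$ runs over integer sequences of content $\nu$ that are weakly decreasing across every block of $K$. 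Reading $K_{\lambda\nu}$ as a count of semistandard tableaux, the left side of $(\star)$ is a sum, over pairs $(P,\p T)$ consisting of a semistandard tableau $P$ and a broken border strip tableau $\p T$ of the same shape, of the weights $\weight \p T$.

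To prove $(\star)$ I would expand everything monomial-by-monomial in $q$: on the right write $(q-1)^{d_K(a)} = \sum_{S}(-1)^{|S|}q^{\,d_K(a)-|S|}$, the sum over subsets $S$ of the set $D_K(a)$ of strict descents of $a$ inside blocks of $K$; on the left split each colour class of $\p T$ — a broken border strip with, say, $c$ components — into a signed power of $q$ by marking which of its $c-1$ junctions count as true breaks. Both sides then become signed sums of monomials indexed by decorated objects, and what is needed is a weight-preserving, sign-reversing involution whose fixed points set up a bijection between the two families: after cancellation, a semistandard tableau together with a compatibly decorated broken border strip tableau of the same shape should encode exactly a sequence $a$ with its chosen descent set $S$, the components of the colour classes matching the runs cut out of $a$ by $S$, and $(-1)^{\hght \p T}$ matching $(-1)^{|S|}$. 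It is natural to organize the argument recursively, peeling off one block of $K$ at a time: this reduces the whole identity to a single ``quantum Pieri / Murnaghan--Nakayama step'' for which one constructs one bijection and one involution, and then iterates.

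The hard part is precisely this bijection-and-involution. The delicate feature is the sign: the left-hand sign $(-1)^{\hght \p T}$ is built from the heights of the border strips into which the colour classes of $\p T$ decompose, while the right-hand sign $(-1)^{|S|}$ counts marked breaks in the runs of $a$, and there is no term-by-term identification — so the involution must simultaneously repair any local clash between $P$, $\p T$ and the markings and toggle the parity of the total sign by exactly one. One then has to check that the involution is well defined and fixed-point-free off the ``good'' configurations, that its fixed points biject with the sequences $a$, and that the surviving monomial is exactly $q^{e_K(a)}(q-1)^{d_K(a)}$ — in particular that the $(q-1)$-power coming from the components of the colour classes of $\p T$ reassembles correctly. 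Carrying this out is where essentially all the work of the proof resides; at $q=1$ every broken border strip with more than one component is killed, and the classical Murnaghan--Nakayama rule \eqref{intro2} drops out.
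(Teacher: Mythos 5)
Your reduction is essentially the paper's: both start from \eqref{intro5} and Theorem \ref{thm1}, and both push the entire burden onto a quantum Murnaghan--Nakayama statement. (The paper phrases the reduction via the Hall inner product, obtaining $\chi_\lambda(T_w)=\langle \overline p_\mu, s_\lambda\rangle$ from $\eta_\nu(T_w)=[m_\nu]\overline p_\mu$; your identity $(\star)$ is just the coefficient-of-$m_\nu$ form of the same Schur expansion of $\overline p_\mu$, and your inversion of the Kostka matrix is valid.) Up to this point the proposal is sound.

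The gap is that the one step carrying all the content --- the quantum Murnaghan--Nakayama rule --- is not proved. You correctly diagnose that everything hinges on a weight-preserving, sign-reversing involution reconciling $(-1)^{\hght \p T}$ with $(-1)^{|S|}$, but you do not construct it, and the construction is genuinely delicate: your version would have to act on pairs consisting of a semistandard tableau $P$ and a decorated broken border strip tableau $\p T$ of the same shape, and matching the components of the colour classes of $\p T$ with the runs of $a$ cut out by $S$ while controlling the height parity is exactly the hard combinatorics, not a routine verification. The paper sidesteps tableau pairs entirely: Lemma \ref{a5} is proved one factor $\overline p_r$ at a time using the bialternant $s_\lambda=a_{\lambda+\delta}/a_\delta$, where $a_{\mu+\delta}\cdot m_\nu$ expands over permuted shifts $\pi(\nu+\mu+\delta)$; when $\lambda/\mu$ is a broken border strip the contributing $\nu$ biject with subsets of a set $\p I$ of rows and the alternating sum telescopes to $\left(q-(q-1)\right)^{|\p I|}$ times the claimed weight, and when it is not, a sign-reversing involution $\pi\mapsto\pi\cdot(i-1,k)$ on the permutations kills everything. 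To complete your route you must either carry out your involution on tableau pairs in full, or substitute the alternant argument (or an existing proof, cf.\ \cite{RamRemmel}) for the quantum Murnaghan--Nakayama step; the iteration over the parts of $\mu$ and the final bookkeeping are then as you describe.
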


Let us state the result in type B. It is well known that we can represent the Coxeter group $B_n$ as $\f S_{-n}$, the group of \emph{signed permutations of $n$}. A signed permutation is a permutation $w$ of the set $\set{-n,\ldots,-1,0,1,\ldots,-n}$ satisfying $w(-i)=-w(i)$ for all $i$. It is uniquely determined by the sequence $w(1)w(2)\cdots w(n)$, which we call the \emph{one-line notation of $w$}. Since a signed permutation is a permutation, we can write it in cyclic notation. We write $(i_1,i_2,\ldots,i_k)$ instead of $(i_1,i_2,\ldots,i_k)(-i_1,-i_2,\ldots,-i_k)$ (and call such a cycle a \emph{positive} cycle) and $(i_1,i_2,\ldots,i_k)^-$ instead of $(i_1,i_2,\ldots,i_k,-i_1,-i_2,\ldots,-i_k)$ (and call such a cycle a \emph{negative} cycle). The group $\f S_{-n}$ is generated by $t = (1)^-$ and $s_i=(i,i+1)$, $i=1,\ldots,n-1$, which satisfy the relations $t^2=s_i^2=e$, $ts_1ts_1=s_1ts_1t$, $s_i s_{i+1}s_i = s_{i+1}s_i s_{i+1}$, $ts_i = s_i t$ for $i \geq 2$ and $s_i s_j = s_j s_i$ for $|i-j| \geq 2$. See Figure \ref{fig1}. Denote the set $\set{t,s_1,\ldots,s_{n-1}}$ by $S_n^B$, and the corresponding Hecke algebra by $H_n^B$.

\medskip

A parabolic subgroup of $\f S_{-n}$ is naturally isomorphic to either $\f S_{\lambda_1} \times \f S_{\lambda_2} \times \cdots \times \f S_{\lambda_p}$ or $\f S_{-\lambda_1} \times \f S_{\lambda_2} \times \cdots \times \f S_{\lambda_p}$ for a composition $\lambda \vdash n$, depending on whether $t$ is among the generators of the subgroup. Call a sequence of non-zero integers $(\lambda_1,\ldots,\lambda_p)$ a \emph{signed composition of $n$}, $\lambda \vdash n$, if $|\lambda_1| + \ldots + |\lambda_p| = n$. If we disregard the order of the elements of a signed composition, we get a \emph{signed partition}. The cycle type of a signed permutation can be thought of as a signed partition. Again, we call an element of $\f S_{-n}$ \emph{parabolic of type $K$} if it is a product of elements of $K \subseteq S_n^B$ (in any order).

\begin{rmk}
 Throughout this paper, it will often be useful to write $\underline n$ instead of $-n$ for a positive integer $n$.
\end{rmk}

\begin{thm}[induction in $H_n^B$] \label{thm2}
 Say we are given subsets $J,K$ of $S_n^B$. Denote by $\lambda = (\lambda_1,\ldots,\lambda_p) \vdash n$ the signed composition corresponding to $J$, and by $\eta_\lambda$ (respectively, $\epsilon_\lambda$) the character of the representation of $H_n^B$ induced from the trivial (respectively, sign) representation of the parabolic subalgebra $H_J$. For a parabolic element $w$ of type $K$, we have
 $$\eta_\lambda(T_{w}) = \sum_a q^{e_K(a) + g_K(a)} (q-1)^{d_K(a) + f_K(a)}$$
 and
 $$\epsilon_\lambda(T_{w}) = \sum_a (-1)^{e_K(a) + g_K(a)} (q-1)^{d_K(a) + f_K(a)},$$
 where the sums are over all integer sequences $a = a_1 a_2 \cdots a_n$ satisfying
 \begin{enumerate}
  \item $1 \leq |a_i| \leq p$ for $i = 1,\ldots, n$,
  \item $\#\set{i \colon |a_i| = k} = |\lambda_k|$ for $k = 1,\ldots,p$,
  \item if $t \in J$, then $a_i \neq -1$ for all $i = 1,\ldots,n$,
  \item if $s_i \in K$, then $a_i \geq a_{i+1}$ for $i = 1,\ldots,n-1$,
  \item if $t \in K$, then either $a_1<0$, or $a_1=1$ and $t \in J$.
 \end{enumerate}
 and where
 \begin{itemize}
  \item $d_K(a)$ is the number of elements in the set $\set{i \colon s_i \in K, a_i > a_{i+1}}$,
  \item $e_K(a)$ is the number of elements in the set $\set{i \colon s_i \in K, a_i = a_{i+1}}$,
  \item $f_K(a)$ is $1$ if $t \in K$ and $a_1<0$, and $0$ otherwise.
  \item $g_K(a)$ is $1$ if $t \in K$ and $a_1>0$, and $0$ otherwise.
 \end{itemize}
\end{thm}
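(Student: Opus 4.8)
The plan is to reduce the type B induced-character formula to the type A formula (Theorem~\ref{thm1}) by splitting off the role of the generator $t=(1)^-$, and to use Lemma~\ref{thm6} together with Theorem~\ref{intro7} to handle the new ingredient. First I would set up the combinatorics: a subset $J\subseteq S_n^B$ corresponds to a signed composition $\lambda$, where the sign of $\lambda_1$ records whether $t\in J$; similarly $K\subseteq S_n^B$ splits into its ``$A$-part'' $K\cap\{s_1,\dots,s_{n-1}\}$ and the single bit ``$t\in K$''. When $t\notin K$, a parabolic element $w$ of type $K$ is literally a parabolic element of type $K$ in a copy of $\f S_n$ sitting inside $\f S_{-n}$, and the induced characters restrict compatibly, so the formula should follow directly from Theorem~\ref{thm1} once one checks that conditions (3) and (5) become vacuous and $f_K=g_K=0$; the signs $\pm a_i$ in the sequence are then just bookkeeping for which block of $\f S_{-\lambda_1}\times\cdots$ each position lands in, and the constraint (2) on absolute values together with (3) (when $t\in J$) exactly reproduces the type A count.

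The substantive case is $t\in K$. Here I would pick the specific parabolic representative $w = t\cdot w'$ where $w'$ is a parabolic element of type $K\cap\{s_1,\dots,s_{n-1}\}$, using that all parabolic elements of a given type are conjugate and of equal length, so $\eta_\lambda(T_w)$ and $\epsilon_\lambda(T_w)$ are independent of the choice. Then apply Lemma~\ref{thm6}: $\chi(T_w)=\sum_{x\in X_J}\sum_{u\in W_J}\chi_0(T_u)\,[T_{xu}]T_wT_x$. The key point is to understand the action of left-multiplication by $T_t$ on $T_x$ for $x\in X_J$: either $\ell(tx)=\ell(x)+1$, in which case $T_tT_x=T_{tx}$, or $\ell(tx)=\ell(x)-1$, in which case $T_tT_x=T_{tx}+(q-1)T_x$ by the quadratic relation. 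Tracking which minimal-coset-representatives $x\in X_J$ get a ``$+1$'' contribution $q$ (when the sign flip is absorbed into a descent at $t$, contributing $g_K$) versus a ``$(q-1)$'' contribution (contributing $f_K$) is exactly what the new statistics $f_K(a)$ and $g_K(a)$ encode; condition (5), ``$a_1<0$, or $a_1=1$ and $t\in J$,'' is precisely the description of which leading entries survive after this multiplication, the second alternative corresponding to the case where $t\in J$ so the block containing position $1$ is a $\f S_{-\lambda_1}$ block and $T_t$ acts through $\chi_0$.

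Concretely, I would argue as follows. Write elements $y\in\f S_{-n}$ via the factorization $y=xv$, $x\in X_J$, $v\in W_J$. Multiplication by $T_{w'}$ (the type-$A$ part) is handled exactly as in the proof of Theorem~\ref{thm1}: it produces the sum over sequences $a$ with the data $e_{K}(a), d_K(a)$ coming from positions $i$ with $s_i\in K$, and the absolute-value conditions (1)--(2) plus condition (3) from $t\in J$. Multiplication by the extra $T_t$ then either leaves the coset representative with $a_1>0$ fixed up to the quadratic relation — generating the factor $q$ recorded by $g_K$ — or, when it would create a negative leading entry, generates either $T_{tx}$ alone or $T_{tx}+(q-1)T_x$, the diagonal term producing the $(q-1)$ recorded by $f_K$. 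I expect the main obstacle to be this last bookkeeping: carefully identifying, for each sequence $a$, exactly one of the $q$-power/$({q-1})$-power contributions and matching it against $f_K(a)+g_K(a)$ in the exponents, and verifying there is no double-counting or sign error when $t\in J$ (so that $t$ acts on the $v$-part through $\chi_0(T_t)=q$ for $\eta$ and $=-1$ for $\epsilon$, consistently with the $q^{g_K}$ versus $(-1)^{g_K}$ in the two formulas). Once the $t$-step is pinned down, the rest is the type A argument applied blockwise, and the $\eta$ versus $\epsilon$ cases differ only by replacing $q^{(\cdots)}$ with $(-1)^{(\cdots)}$, reflecting $\chi_0(T_s)=q$ versus $\chi_0(T_s)=-1$ on generators.
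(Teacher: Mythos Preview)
Your plan to peel off the generator $t$ and then ``apply type A'' to the rest is natural, but there is a genuine gap: Theorem~\ref{thm1} does not apply, not even when $t\notin K$. The representation you are taking the trace on is $H_n^B\otimes_{H_J}\C$, whose dimension is $|X_J^B|$, strictly larger than the corresponding type A transversal; the signed sequences $a$ (which may have negative entries even when $t\notin K$ and $t\notin J$) are exactly the bookkeeping for this larger transversal. So evaluating $\eta_\lambda(T_{w'})$ with $w'\in\f S_n$ is not a type A computation, and ``handled exactly as in the proof of Theorem~\ref{thm1}'' is only a slogan until you reprove the key projection lemma (Lemma~\ref{a1}) for signed permutations. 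That lemma is what lets you write $\wP_{x,J}(\wT_{w's}\wT_x)=\wP_{x,J}(\wT_{w'}\cdot\wP_{x,J}(\wT_s\wT_x))$ and hence isolate the contribution of each generator; without a type B version your inductive bookkeeping for $d_K,e_K,f_K,g_K$ has no foundation. (Theorem~\ref{intro7} plays no role in this argument and should be dropped.)

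The paper does exactly the missing step: it proves the type B projection lemma (Lemma~\ref{bd1}, covering both $s=s_i$ and $s=t$), deduces the factorization formula~\eqref{bd2}, and then runs one uniform induction on $\ell(w)$, treating all generators of $K$ on the same footing. For each $x\in X_J^B$ with $a=\Phi_J^B(x)$ it shows that $\wP_{x,J}(\wT_w\wT_x)=0$ unless $a$ satisfies (4) and (5), and otherwise $\wP_{x,J}(\wT_w\wT_x)=R^{d_K(a)+f_K(a)}\wT_{w_{a,J}x}$ with $\ell(w_{a,J})=e_K(a)+g_K(a)$; Lemma~\ref{thm6} then finishes. Your organization --- do the $s_i$'s first, then apply $T_t$ on the left --- is a legitimate special case of that induction (choose the parabolic representative $w=tw'$), and has the minor advantage that the final $T_t$-step needs no projection lemma for $t$ since nothing is multiplied afterwards; but you still must establish Lemma~\ref{bd1} for the $s_i$'s inside $\f S_{-n}$, and your proposal skips this.
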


Let us turn to type D. We can represent the Coxeter group $D_n$ as $\f S^0_{-n}$, the subgroup of $\f S_{-n}$ consisting of all signed permutations $w$ for which $\#\set{i \colon i > 0, w(i) < 0}$ is even. The group $\f S^0_{-n}$ is generated by $t = (1 -2)$ and $s_i=(i,i+1)$, $i=1,\ldots,n-1$, which satisfy the relations $t^2=s_i^2=e$, $ts_2t=s_2ts_2$, $s_i s_{i+1}s_i = s_{i+1}s_i s_{i+1}$, $ts_i = s_i t$ for $i \neq 2$ and $s_i s_j = s_j s_i$ for $|i-j| \geq 2$. Write $S_n^D = \set{t,s_1,\ldots,s_{n-1}}$. See Figure \ref{fig1}. Denote the set $\set{t,s_1,\ldots,s_{n-1}}$ by $S_n^D$, and the corresponding Hecke algebra by $H_n^D$.

\medskip

The generators $s_1$ and $t$ play a symmetric role. Therefore we may assume without loss of generality that we do \emph{not} have $s_1 \notin J$, $t \in J$. In this case, a parabolic subgroup of $\f S^0_{-n}$ is naturally isomorphic to either $\f S_{\lambda_1} \times \f S_{\lambda_2} \times \cdots \times \f S_{\lambda_p}$, $\f S^0_{-\lambda_1} \times \f S_{\lambda_2} \times \cdots \times \f S_{\lambda_p}$ for a composition $\lambda \vdash n$, depending on whether $t$ is among the generators of the subgroup.

\medskip

We do not impose such a limitation on $K$ in the following.

\begin{thm}[induction in $H_n^D$] \label{thm3}
 Say we are given subsets $J,K$ of $S_n^D$. Denote by $\lambda = (\lambda_1,\ldots,\lambda_p) \vdash n$ the signed composition corresponding to $J$, and by $\eta_\lambda$ (respectively, $\epsilon_\lambda$) the character of the representation of $H_n^D$ induced from the trivial (respectively, sign) representation of the parabolic subalgebra $H_J$. For a parabolic element $w$ of type $K$, we have 
 $$\eta_\lambda(T_{w}) = \sum_a q^{e_K(a) + g_K(a)} (q-1)^{d_K(a) + f_K(a)}$$
 and
 $$\epsilon_\lambda(T_{w}) = \sum_a (-1)^{e_K(a) + g_K(a)} (q-1)^{d_K(a) + f_K(a)},$$
 where the sums are over all integer sequences $a = a_1 a_2 \cdots a_n$ satisfying
 \begin{enumerate}
  \item $1 \leq |a_i| \leq p$ for $i = 1,\ldots, n$,
  \item $\#\set{i \colon |a_i| = k} = |\lambda_k|$ for $k = 1,\ldots,p$,
  \item $\#\set{i \colon a_i < 0}$ is even,
  \item if $t \in J$, then $a_i \neq -1$ for at most one $i$, and for such $i$ we have $a_j\neq 1$ for $j < i$,
  \item if $s_i \in K$, then either $a_i \geq a_{i+1}$, or $t \in J$, $a_i = -1$, $a_{i+1}=1$,
  \item if $t \in K$, then we have the following implications:
  \begin{itemize}
   \item $|a_1|>|a_2| \Rightarrow a_1<0$,
   \item $|a_1|<|a_2| \Rightarrow a_2<0$,
   \item $|a_1|=|a_2| \Rightarrow a_2<0 \mbox{ or } a_1=a_2=1, t \in J$,
  \end{itemize} 
 \end{enumerate}
 and where
 \begin{itemize}
  \item $d_K(a)$ is the number of elements in the set $\set{i \colon s_i \in K, a_i > a_{i+1}}$,
  \item $e_K(a)$ is the number of elements in the set $\set{i \colon s_i \in K, a_i \leq a_{i+1}}$,
  \item $f_K(a)$ is $1$ if $t \in K$ and $a_1 =a_2 < 0$ or $|a_1|\neq |a_2|$, and $0$ otherwise,
  \item $g_K(a)$ is $1$ if $t \in K$ and $a_1 = -a_2$ or $a_1=a_2=1$, and $0$ otherwise.
 \end{itemize}
\end{thm}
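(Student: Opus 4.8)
First, I would apply Lemma~\ref{thm6} with $\chi_0$ the trivial, resp.\ sign, character of the parabolic subalgebra $H_J$. By the computation in the Example of Section~\ref{intro} (and its sign analogue), $\chi_0(T_u)=q^{\ell(u)}$, resp.\ $\chi_0(T_u)=(-1)^{\ell(u)}$, for $u\in W_J$, where $W_J$ is $\f S_{\lambda_1}\times\cdots\times\f S_{\lambda_p}$ when $t\notin J$ and $\f S^0_{-\lambda_1}\times\f S_{\lambda_2}\times\cdots\times\f S_{\lambda_p}$ when $t\in J$ (each is itself a Coxeter group, so both one-dimensional representations make sense). Since $\chi$ is a class function by~\eqref{intro6}, and any two parabolic elements of type $K$ are conjugate of equal length --- hence joined by the length-preserving moves of Theorem~\ref{intro7} --- the value $\chi(T_w)$ depends only on $K$, so I may replace $w$ by any convenient parabolic element of type $K$ and fix one reduced word for it.

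The combinatorial core is then to evaluate $\chi(T_w)=\sum_{x\in X_J}\sum_{u\in W_J}\chi_0(T_u)\,[T_{xu}]\,T_wT_x$ inside the induced module $M=H_n^D\otimes_{H_J}\C$, whose basis is $\set{T_x\otimes 1\colon x\in X_J}$. One first encodes this basis combinatorially: a minimal coset representative $x\in X_J$ is exactly an ordered assignment of each of $1,\dots,n$ to a block of $\lambda$ respecting the sizes $|\lambda_k|$, decorated with the sign data carried by the $\f S^0_{-\lambda_1}$ factor when $t\in J$; this is precisely a sequence $a=a_1\cdots a_n$ obeying (1)--(4), with clause (3) expressing membership in $\f S^0_{-n}$ and the ``at most one $a_i=-1$'' clause in (4) expressing minimality over $\f S^0_{-\lambda_1}$. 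Then one pushes $T_w$ through its reduced word one generator at a time: each $s_i\in K$ acts on $T_x\otimes 1$ through the quadratic relation $T_{s_i}^2=(q-1)T_{s_i}+q$, contributing a ``stay'' term (scalar $q$ in the trivial case, $-1$ in the sign case) and, when positions $i$ and $i+1$ have to be interchanged, a ``descent'' term (scalar $q-1$); $t$ behaves analogously at positions $1,2$; and the reductions putting each intermediate back into the form $T_y\otimes 1$ with $y\in X_J$ are what turn the trailing $W_J$-parts into the recorded powers of $q$, resp.\ signs. A (bookkeeping-heavy but routine) verification then shows that the terms returning to a fixed $T_x\otimes 1$ are indexed exactly by the sequences $a$ satisfying the descent conditions (5) and (6), each contributing the monomial $q^{e_K(a)+g_K(a)}(q-1)^{d_K(a)+f_K(a)}$, resp.\ $(-1)^{e_K(a)+g_K(a)}(q-1)^{d_K(a)+f_K(a)}$; summing over $x$ (equivalently over $a$) gives the two formulas, which run in lockstep, differing only in the ``stay'' scalar, whence the common $(q-1)$-exponents.

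I expect the real difficulty to be entirely the ``type-$D$ corner'': the interplay of $t=(1\ \un{2})$ with $s_1$ and with $W_J$ when $t\in J$. If $t\notin J$, then $W_J$ is an ordinary Young subgroup, there is no sign data, and the analysis parallels type $B$ (Theorem~\ref{thm2}); I would dispose of this case first. When $t\in J$, the factor $\f S^0_{-\lambda_1}$ is a genuine type-$D$ group rather than a hyperoctahedral one, so a generator $s_i\in K$ with $a_i=-1,\ a_{i+1}=1$ need not produce an honest descent --- the pair $\un1,1$ can be absorbed back into $W_J$ only via the relation defining $t$ --- and this is precisely the source of the exceptional alternative in (5), the three-way split in (6), and the unusual definitions of $e_K$ (with ``$\leq$'' in place of ``$=$''), $f_K$ and $g_K$. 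To keep this under control I would exploit the identity $t=t'\,s_1\,t'$ in $\f S_{-n}$, where $t'=(1)^-$ is the type-$B$ generator, which is length-additive and so gives $\wT_t=\wT_{t'}\wT_{s_1}\wT_{t'}$ inside $H_n^B$; this lets one transport the products $T_wT_x$ into $H_n^B$, where the corresponding generator-by-generator computation has already been carried out for Theorem~\ref{thm2}, and then translate the surviving sequences and statistics back. In this way the genuinely new content of Theorem~\ref{thm3} is reduced to tracking how the index-$2$ passages $\f S_{-n}\supset\f S^0_{-n}$ and $\f S_{-\lambda_1}\supset\f S^0_{-\lambda_1}$ alter the parametrizing sequences --- which is exactly what clauses (3)--(6) of the statement encode.
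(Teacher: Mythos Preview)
Your first two paragraphs track the paper's approach closely: the paper too starts from Lemma~\ref{thm6}, encodes $X_J$ by integer sequences via a type-$D$ analogue of the bijection $\Phi_J^B$, and then computes $\wP_{x,J}(\wT_w\wT_x)$ one generator at a time by induction on $\ell(w)$. In fact the paper gives \emph{less} detail than you do here: it carries out the type-$B$ case in full and then says the type-$D$ argument ``is similar and we will leave it as an exercise for the reader,'' pointing only to a type-$D$ analogue of the key coset lemma (Lemmas~\ref{a1} and~\ref{bd1}, i.e.\ $vsx\sim_J x\not\sim_J sx\Rightarrow s\in\rw(v)$) and to the type-$D$ length formula as the ingredients needed to run the same induction.

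The genuine problem is the shortcut in your third paragraph. The identity $t=t'\,s_1\,t'$ does hold in $\f S_{-n}$ and is length-additive there, so $\wT^B_{t's_1t'}=\wT^B_{t'}\wT^B_{s_1}\wT^B_{t'}$ \emph{as an element of $H_n^B$}. But $H_n^D$ is not a subalgebra of $H_n^B$ in the equal-parameter setting of this paper: the generator $\wT_t\in H_n^D$ satisfies $\wT_t^2=1+R\,\wT_t$, whereas squaring $\wT^B_{t'}\wT^B_{s_1}\wT^B_{t'}$ in $H_n^B$ produces, among other terms, $R\,\wT^B_{t'}\wT^B_{s_1}\wT^B_{t'}\wT^B_{s_1}\wT^B_{t'}$, so the quadratic relation fails. (An embedding of $H_n^D$ into a type-$B$ Hecke algebra does exist if one specializes the $t'$-parameter to $1$, but then Theorem~\ref{thm2} as stated no longer applies.) Hence ``transporting $T_wT_x$ into $H_n^B$'' does not compute the right product, and you cannot simply reuse the type-$B$ generator-by-generator analysis.

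The paper's intended route avoids this entirely: it stays inside $H_n^D$, proves the coset lemma directly for the type-$D$ generating set (using the type-$D$ length formula recorded at the end of Section~\ref{bd}), and then reruns the induction of Corollary~\ref{a2} and equation~\eqref{a3}. The extra case splits that arise when $s=t$ or when $s=s_i$ interacts with the exceptional $(-1,1)$ pair are exactly what produce clauses~(5) and~(6) and the modified statistics $e_K,f_K,g_K$. That bookkeeping is where the actual work lies, and it has to be done natively in type~$D$.
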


\begin{rmk}
 A permutation is parabolic if and only if it has minimal length in its conjugacy class. Similarly, a signed permutation in $\f S_{-n}$ or $\f S_{-n}^0$ is parabolic if has minimal length in its conjugacy class, and if its conjugacy class has at most one negative cycle. See Section \ref{remarks} for a discussion of the calculation of values of characters evaluated at $T_w$ for other (signed) permutations which are not of minimal length in their conjugacy classes.
\end{rmk}

The Coxeter group $I_2(m)$ is generated by two elements, which we denote $1$ and $2$, and the relation $(12)^m = (21)^m$. See Figure \ref{fig1}. Denote the set $\set{1,2}$ by $S_m^I$, and the corresponding Hecke algebra by $H_m^I$. The group has $2m$ elements, one of length $0$, two of length $i$ for $1 \leq i \leq m-1$, and one of length $m$. There are only four parabolic subgroups, generated by $\emptyset$, $\set 1$, $\set 2$ and $\set{1,2}$, with the last one being uninteresting for induction.

\begin{thm}[induction in $H_m^I$] \label{thm4}
 Denote by $H_0$ the trivial subalgebra, and by $H_1$ (respectively $H_2$) the parabolic subalgebra generated by $T_1$ (respectively $T_2$) of $H_m^I$. Denote by $\eta_c$ (respectively, $\epsilon_c$), $c=0,1,2$, the character of the representation of $H_m^I$ induced from the trivial (respectively, sign) representation of $H_c$. We have $\eta_0(T_e) = \epsilon_0(T_e) = 2m$, $\eta_1(T_e) = \epsilon_1(T_e) = \eta_2(T_e) = \epsilon_2(T_e) = m$. Furthermore, if $w$ is an element of even length $\ell \geq 2$, then
 $$\eta_0(T_w) = \epsilon_0(T_w)=\sum_{j=0}^{\ell/2-1} \frac{\ell}{\ell-j} \binom{\ell-j}{j} q^j (q-1)^{\ell-2j},$$
 and if $w$ is an element of odd length $\ell$, then
 $$\eta_0(T_w) = \epsilon_0(T_w)=m q^{(\ell+1)/2} - m q^{(\ell-1)/2} + \sum_{j=0}^{(\ell-3)/2} \frac{\ell}{\ell-j} \binom{\ell-j}{j} q^j (q-1)^{\ell-2j}.$$
 Now choose $c \in \set{1,2}$. For an element $w$ of even length $\ell \geq 2$, we have
 $$\eta_c(T_w) = \sum_{j=1}^{\ell/2} \binom{\ell/2+j-1}{2j-1} q^{\ell/2+1-j}(q-1)^{2j-1} + \sum_{j=1}^{\ell/2-1} \binom{\ell/2+j-1}{2j} q^{\ell/2-j} (q-1)^{2j}$$
 and
 $$\epsilon_c(T_w) = - \sum_{j=1}^{\ell/2} \binom{\ell/2+j-1}{2j-1} q^{\ell/2-j}(q-1)^{2j-1} + \sum_{j=1}^{\ell/2-1} \binom{\ell/2+j-1}{2j} q^{\ell/2-j} (q-1)^{2j}.$$
 For an element $w$ of odd length $\ell$, we have
 $$\eta_c(T_w) =  \left\{\begin{array}{ccl} \frac{m+1}2 q^{(\ell+1)/2} - \frac{m-1}2 q^{(\ell-1)/2} & : & m \mbox{ odd} \\ \left(\frac{m}2 + 1\right) q^{(\ell+1)/2} - \left(\frac{m}2  - 1 \right) q^{(\ell-1)/2} & : & m \mbox{ even, } w \sim c \\ \frac{m}2 q^{(\ell+1)/2} - \frac{m}2 q^{(\ell-1)/2} & : & m \mbox{ even, } w \not\sim c\end{array} \right.$$
 $$+ \sum_{j=2}^{(\ell-1)/2} \!\! \binom{(\ell-1)/2+j-1}{2j-1} q^{(\ell+1)/2-j}(q-1)^{2j-1} + \!\!\sum_{j=1}^{(\ell-1)/2} \!\! \binom{(\ell-1)/2+j}{2j} q^{(\ell+1)/2-j}(q-1)^{2j}$$
 and
 $$\epsilon_c(T_w) =  \left\{\begin{array}{ccl} \frac{m-1}2 q^{(\ell+1)/2} - \frac{m+1}2 q^{(\ell-1)/2} & : & m \mbox{ odd} \\ \left(\frac{m}2 - 1\right) q^{(\ell+1)/2} - \left(\frac{m}2  + 1 \right) q^{(\ell-1)/2} & : & m \mbox{ even, } w \sim c \\ \frac{m}2 q^{(\ell+1)/2} - \frac{m}2 q^{(\ell-1)/2} & : & m \mbox{ even, } w \not\sim c\end{array} \right.$$
 $$+ \sum_{j=2}^{(\ell-1)/2} \!\! \binom{(\ell-1)/2+j-1}{2j-1} q^{(\ell+1)/2-j}(q-1)^{2j-1} - \!\!\sum_{j=1}^{(\ell-1)/2} \!\! \binom{(\ell-1)/2+j}{2j} q^{(\ell-1)/2-j}(q-1)^{2j},$$
 where $w \sim v$ means that $w$ and $v$ are conjugate in $I_2(m)$.
\end{thm}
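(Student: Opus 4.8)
The plan is to realize each of the induced modules explicitly, using the construction in the proof of Lemma~\ref{thm6}, write down the matrices of the generators $\wT_1,\wT_2$ in a convenient basis, and read off $\chi(\wT_w)=\operatorname{tr}\varphi(\wT_w)$; Theorem~\ref{intro7} then serves as a labor-saving device. To begin, $\eta_0=\epsilon_0$ automatically, since $H_\emptyset\cong\C$ has a single one-dimensional representation; this common character is the left regular character, $\eta_0(\wT_w)=\sum_{v\in W}[\wT_v]\,\wT_w\wT_v$. For $c\in\{1,2\}$ the set $X_{\{s_c\}}$ consists of $e$ and the $m-1$ elements of length $1\le k\le m-1$ whose reduced word does not end in $s_c$; listing them by length as $b_0=e,b_1,\dots,b_{m-1}$, the vectors $b_k\otimes 1$ form a basis of the induced module. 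Since $I_2(m)$ is dihedral, each product $\wT_{s_i}\wT_{b_k}$ is either $\wT_{s_ib_k}$ or $\wT_{s_ib_k}+R\wT_{b_k}$ with $s_ib_k$ again a (scalar multiple of a) basis vector; hence $\varphi(\wT_{s_1})$ and $\varphi(\wT_{s_2})$ are bidiagonal ``shift'' matrices carrying a few entries $R$ on the diagonal (where a length drops) and a boundary scalar $q^{1/2}$ on $b_0$ and, according to the parity of $m$, on $b_{m-1}$ (there $\wT_{s_c}$ applied to a maximal coset element must be rewritten, using $\wT_{s_c}\cdot 1=q^{1/2}\cdot 1$). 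The module for $\epsilon_c$ is the same with boundary scalar $-q^{-1/2}$ in place of $q^{1/2}$.

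For $w$ of length $\ell$, $\varphi(\wT_w)$ is the product of $\ell$ of these matrices along a reduced (alternating) word, and $\operatorname{tr}\varphi(\wT_w)$ is a weighted count of closed walks of length $\ell$ on the path graph $0\!-\!1\!-\!\cdots\!-\!(m-1)$ with loops. I would split the count into an ``interior'' part and ``boundary corrections''. The interior part is computed once: writing $z_{2k}$ for the length-$2k$ element $(s_1s_2)^k$, one has $\chi(\wT_{z_{2k}})=\operatorname{tr}\big(\varphi(\wT_1\wT_2)^k\big)=\sum_\mu\mu^k$, where $\mu$ runs over the eigenvalues of the sparse matrix $\varphi(\wT_1\wT_2)$ (whose characteristic polynomial obeys a Chebyshev-type recursion), and this power sum expands precisely into the stated binomial sums; here one uses the identity $\sum_j\frac{\ell}{\ell-j}\binom{\ell-j}{j}q^j(q-1)^{\ell-2j}=q^\ell+(-1)^\ell$ (both sides satisfy $x_\ell=(q-1)x_{\ell-1}+qx_{\ell-2}$ with $x_0=2$, $x_1=q-1$, and $t^2-(q-1)t-q=(t-q)(t+1)$), which in particular collapses the $\eta_0=\epsilon_0$ formula to a square, $(q^{\ell/2}-1)^2$, for even $\ell$. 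The residual terms linear in $m$ come from the one-dimensional constituents of the module, whose multiplicities grow linearly in $m$ --- equivalently, from the $\Theta(m)$ closed walks that never touch an endpoint and consist mostly of loops.

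The odd-length values are then obtained cheaply from the even ones via Theorem~\ref{intro7}: checking in $I_2(m)$ that the two elements $z_\ell,z_\ell'$ of a given even length are conjugate by a length-preserving conjugation shows $\chi(\wT_{z_\ell})=\chi(\wT_{z_\ell'})$, while each element $z_\ell$ of odd length $\ell\ge 3$ equals $s_1\,z_{\ell-2}'\,s_1$ with $\ell(s_1 z_{\ell-2}'s_1)=\ell(z_{\ell-2}')+2$, so Theorem~\ref{intro7}(b) gives $\chi(\wT_{z_\ell})=\chi(\wT_{z_{\ell-2}'})+R\,\chi(\wT_{z_{\ell-1}})$ (and the companion with the two sides interchanged). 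Subtracting these yields $\chi(\wT_{z_\ell})-\chi(\wT_{z_\ell'})=(-1)^{(\ell-1)/2}\big(\chi(\wT_{s_1})-\chi(\wT_{s_2})\big)$, and a direct evaluation of the two length-$1$ values (again by Lemma~\ref{thm6}) shows this difference vanishes unless $m$ is even --- this is the source of the $w\sim c$ versus $w\not\sim c$ split. Finally $\epsilon_c$ is deduced from $\eta_c$ by the boundary substitution $q^{1/2}\mapsto-q^{-1/2}$, which flips signs and lowers $q$-powers exactly as in the displayed pairs of formulas.

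The work is essentially all bookkeeping. One genuine (if small) subtlety is worth flagging: Theorem~\ref{intro7} alone does \emph{not} determine the even-length values, since conjugating an element of even length by a generator only produces the other element of the same length; so the even-length seeds must really be extracted from the explicit module (the eigenvalue computation above), after which everything propagates. The main obstacle is then organizational: the answer bifurcates on the parity of $m$, the parity of $\ell(w)$, and (for $m$ even) on whether $w\sim c$, and one must keep straight which coset representative occupies the endpoint $b_{m-1}$ and how $T_{w_0}$ re-expands in each basis. I would therefore compute the generic interior contribution once --- it is where the Chebyshev/Lucas binomials originate --- and then add, case by case, the boundary corrections that carry all the $m$'s; the concluding step in each branch is to verify a polynomial identity of Lucas/Chebyshev type, which follows from the characteristic equation $t^2-(q-1)t-q=0$.
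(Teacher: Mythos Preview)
Your outline is a sound alternative to the paper's argument, and genuinely different in method.

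The paper does not realize the induced module as a matrix representation at all. Instead it works entirely inside the Hecke algebra: it first proves an explicit product formula for $\wT_{(12)^k}\wT_{(12)^{m/2}}$ (Lemma~\ref{i2}), uses it to extract the individual coefficients $[\wT_x]\wT_w\wT_x$ and $[\wT_{xs_c}]\wT_w\wT_x$ (Lemmas~\ref{i3} and \ref{i4}), and then plugs these into the trace formula of Lemma~\ref{thm6}, summing the resulting binomials with the telescoping identity \eqref{i1}. Theorem~\ref{intro7} is never invoked in type~I; every length, parity, and conjugacy case is handled by a separate direct computation of structure constants.

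Your route---writing $\varphi(\wT_1),\varphi(\wT_2)$ as almost-shift matrices on the $m$-element basis $b_0,\dots,b_{m-1}$, reading the trace as a weighted closed-walk count on a path with loops, and using the characteristic recursion $t^2-(q-1)t-q=0$ to identify the Lucas-type binomial sums---is more conceptual and explains \emph{why} those particular binomials appear. Your use of Theorem~\ref{intro7} to propagate from even to odd lengths, and to isolate the $w\sim c$ vs.\ $w\not\sim c$ dichotomy via $\chi(\wT_{s_1})-\chi(\wT_{s_2})$, is a genuine shortcut the paper does not take. What the paper's approach buys is that nothing is left implicit: every coefficient is written down and summed, so the casework, while tedious, is mechanical. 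Your approach buys structural insight and less duplication, at the cost of having to keep careful track of the two boundary vertices (where the $q^{1/2}$ or $-q^{-1/2}$ scalars live, and where the parity of $m$ enters); as you correctly flag, the even-length seeds still require a real computation, and that computation---the eigenvalue/Chebyshev step for $\operatorname{tr}(\varphi(\wT_1)\varphi(\wT_2))^k$ on the $m$-dimensional module---is where most of the actual content hides in your version.
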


We illustrate the theorems with examples.

\begin{exm}[using Theorem \ref{thm1}]
 For $n=6$, $J=\set{s_1,s_2,s_4}$ (so $\lambda=321$) and $K = \set{s_1,s_3}$ (so $w=214356$), we have the following sequences and weights corresponding to $\eta_\lambda$. We insert bars in between elements of the sequence $a$ in places determined by $K$.
 $$\begin{array}{cc|cc|cc}
 a & \mbox{weight} & a & \mbox{weight} & a & \mbox{weight} \\
 \hline
 11|21|2|3 & q^{1}(q-1)^{1} & 11|21|3|2 & q^{1}(q-1)^{1} & 11|31|2|2 & q^{1}(q-1)^{1}\\ 
 11|22|1|3 & q^{2}(q-1)^{0} & 11|22|3|1 & q^{2}(q-1)^{0} & 11|32|1|2 & q^{1}(q-1)^{1}\\
 11|31|2|1 & q^{1}(q-1)^{1} & 21|11|2|3 & q^{1}(q-1)^{1} & 21|11|3|2 & q^{1}(q-1)^{1}\\
 21|21|1|3 & q^{0}(q-1)^{2} & 21|21|3|1 & q^{0}(q-1)^{2} & 21|31|1|2 & q^{0}(q-1)^{2}\\
 21|31|2|1 & q^{0}(q-1)^{2} & 21|32|1|1 & q^{0}(q-1)^{2} & 31|11|2|2 & q^{1}(q-1)^{1}\\
 31|21|1|2 & q^{0}(q-1)^{2} & 31|21|2|1 & q^{0}(q-1)^{2} & 31|22|1|1 & q^{1}(q-1)^{1}\\
 22|11|1|3 & q^{2}(q-1)^{0} & 22|11|3|1 & q^{2}(q-1)^{0} & 22|31|1|1 & q^{1}(q-1)^{1}\\
 31|11|1|2 & q^{1}(q-1)^{1} & 32|11|2|1 & q^{1}(q-1)^{1} & 32|21|1|1 & q^{0}(q-1)^{2}
 \end{array}$$
 That means that
 $$\eta_{321}(T_{214356}) = 4 q^2 + 12 q(q-1) + 8 (q-1)^2 = 24 q^2 - 28 q + 8$$
 and
 $$\epsilon_{321}(T_{214356}) = 4 - 12(q-1) + 8(q-1)^2 = 8 q^2 - 28 q + 24.$$
\end{exm}

\begin{exm}[using Theorem \ref{thm5}]
 For $n=6$, $J=\set{s_1,s_2,s_4}$ (so $\lambda=321$) and $K = \set{s_1,s_3}$ (so $w=214356$ and $\mu = 2211$), the following are all broken border strip tableaux of shape $\lambda$ and type $\mu$:
 $$\begin{array}{ccc} 1 & 1 & 3 \\ 2 & 2 & \\ 4 & & \end{array} \qquad \begin{array}{ccc} 1 & 1 & 4 \\ 2 & 2 & \\ 3 & & \end{array} \qquad \begin{array}{ccc} 1 & 1 & 3 \\ 2 & 4 & \\ 2 & & \end{array} \qquad \begin{array}{ccc} 1 & 1 & 4 \\ 2 & 3 & \\ 2 & & \end{array}$$
 $$\begin{array}{ccc} 1 & 2 & 2 \\ 1 & 3 & \\ 4 & & \end{array} \qquad \begin{array}{ccc} 1 & 2 & 2 \\ 1 & 4 & \\ 3 & & \end{array} \qquad \begin{array}{ccc} 1 & 2 & 3 \\ 1 & 2 & \\ 4 & & \end{array} \qquad \begin{array}{ccc} 1 & 2 & 4 \\ 1 & 2 & \\ 3 & & \end{array}$$
 $$\begin{array}{ccc} 1 & 1 & 2 \\ 2 & 3 & \\ 4 & & \end{array} \qquad \begin{array}{ccc} 1 & 1 & 2 \\ 2 & 4 & \\ 3 & & \end{array} \qquad \begin{array}{ccc} 1 & 2 & 3 \\ 1 & 4 & \\ 2 & & \end{array} \qquad \begin{array}{ccc} 1 & 2 & 4 \\ 1 & 3 & \\ 2 & & \end{array}$$
 Note that the first $8$ tableaux are actually border strip tableaux. We have
 $$\chi^{321}(T_{214356}) = 2q^4-4q^2+2+2q^2(q^2-1)-2(q^2-1) = 4q^4-8q^2+4.$$
\end{exm}

\begin{exm}[using Theorem \ref{thm2}]
 For $n=5$, $J=\set{t,s_1,s_2,s_4}$ (so $\lambda=\underline 221$) and $K = \set{t,s_1,s_3}$ (so $w= 2\underline 1 4 3 5$), we have the following sequences and weights corresponding to $\eta_\lambda$. We insert bars in between elements of the sequence $a$ in places determined by $K$.
 $$\begin{array}{cc|cc|cc}
 a & \mbox{weight} & a & \mbox{weight} & a & \mbox{weight} \\
 \hline
 11|1 \un 2| \un 2 & q^{2}(q-1)^{1} & 1 1|1 \un 2|2 & q^{2}(q-1)^{1} & 1 1|2 1|\un 2 & q^{2}(q-1)^{1}\\ 
 11|2 1| \un 2 & q^{2}(q-1)^{1} & 11| \un 2 \un 2|1 & q^{3}(q-1)^{0} & 11|2 \un 2|1 & q^{2}(q-1)^{1}\\
 11|22|1 & q^{3}(q-1)^{0} & 1 \un 2|11| \un 2 & q^{2}(q-1)^{1} & 1 \un 2|11|2 & q^{2}(q-1)^{1}\\
 1 \un 2|1 \un 2|1 & q^{1}(q-1)^{2} & 1 \un 2|21|1 & q^{1}(q-1)^{2} &  \un 2\un 2|11|1 & q^{2}(q-1)^{3}  \end{array}$$
 Thus
 $$\eta_{\un 221}(T_{2 \underline 14 3 5}) = 2q^3 + 8q^2(q-1)+2q(q-1)^2 = 12 q^3 - 12 q^2 + 2 q$$
  and
 $$\epsilon_{\un 221}(T_{2 \underline 14 3 5}) = -2 + 8q^2(q-1)-2q(q-1)^2 = 6 q^3- 4 q^2- 2 q-2.$$
\end{exm}

\begin{exm}[using Theorem \ref{thm3}]
 For $n=5$, $J=\set{t,s_1,s_2,s_4}$ (so $\lambda=\underline 221$) and $K = \set{t,s_1,s_3}$ (so $w=\underline 1 \underline 2 4 3 5$), we have the following sequences and weights corresponding to $\eta_\lambda$. We insert bars in between elements of the sequence $a$ in places determined by $K$.
 $$\begin{array}{cc|cc|cc}
  a & \mbox{weight} & a & \mbox{weight} & a & \mbox{weight} \\
  \hline
  2 \un 2|\un 1 1|1 & q^{2}(q-1)^{1} & \un 1 1|1 \un 2|2 & q^{2}(q-1)^{1} & \un 1 1|2 1|\un 2 & q^{2}(q-1)^{1}\\ 
  \un 11|2\un 2|1 & q^{2}(q-1)^{1} & \un 1 \un 2|11|2 & q^{1}(q-1)^{2} & \un 1 \un 2|21|1 & q^{0}(q-1)^{3}\\
  11|1 \un 2|\un 2 & q^{2}(q-1)^{1} & 11|21|2 & q^{2}(q-1)^{1} & 11|\un 2 \un 2|1 & q^{3}(q-1)^{0}\\
  11|22|1 & q^{3}(q-1)^{0} & 1 \un 2|11|\un 2 & q^{1}(q-1)^{2} & 1\un 2|1 \un 2|1 & q^{0}(q-1)^{3}\\
  \un 2 \un 2|11|1 & q^{2}(q-1)^{1} & & & &
  \end{array}$$
 Therefore
 $$\eta_{\un 221}(T_{\underline 1 \underline 2 4 3 5}) = 2q^3 + 7q^2(q-1)+2q(q-1)^2+2(q-1)^3 = 13 q^3 - 17 q^2 + 8 q  - 2$$
 and
 $$\epsilon_{\un 221}(T_{\underline 1 \underline 2 4 3 5}) = -2 + 7(q-1)-2(q-1)^2+2(q-1)^3 = 2 q^3- 8 q^2 + 17 q  -13.$$
\end{exm}

\begin{exm}[using Theorem \ref{thm4}]
 Take $m=14$ and $w=121212121$. Then
 $$\eta_1(T_w) = 8q^5 - 6q^4 + 10 q^3(q-1)^3+6q^2(q-1)^5+q(q-1)^7+$$
 $$+10q^4(q-1)^2+15q^3(q-1)^4+7q^2(q-1)^6+q(q-1)^8 = q^9+3 q^5-2 q^4$$
 and
 $$\epsilon_1(T_w) = 6q^5 - 8q^4 + 10 q^3(q-1)^3+6q^2(q-1)^5+q(q-1)^7-$$
 $$-10q^3(q-1)^2-15q^2(q-1)^4-7q^1(q-1)^6-(q-1)^8 = 2 q^5-3 q^4-1.$$
\end{exm}

\section{Characters in type A} \label{a}

For a subset $J \subseteq S_n^A$ and the corresponding composition $\lambda$, $\# \f S_J = \prod \lambda_i!$, and the index of $\f S_J$ is $\binom{n}{\lambda_1,\ldots,\lambda_p}$.

\medskip

The length of a permutation $w=w_1 \cdots w_n$ is equal to the number of inversions, i.e.
$$\#\set{i < j \colon w_i>w_j}.$$
Permutations $w$ and $v$ are in the same left coset of $\f S_J$ if and only if for each $k = 1,\ldots,p$, the sets 
$$\set{w_{\lambda_1 + \ldots + \lambda_{k-1} + 1},w_{\lambda_1 + \ldots + \lambda_{k-1} + 2},\ldots,w_{\lambda_1 + \ldots + \lambda_{k}}}$$
and 
$$\set{v_{\lambda_1 + \ldots + \lambda_{k-1} + 1},v_{\lambda_1 + \ldots + \lambda_{k-1} + 2},\ldots,v_{\lambda_1 + \ldots + \lambda_{k}}}$$
are equal.

\medskip
 
Let us denote by $X_J$ the set of coset representatives of $\f S_J$ of minimal length. A permutation $x$ is in $X_J$ if and only if for each $k = 1,\ldots,p$, we have 
$$x_{\lambda_1 + \ldots + \lambda_{k-1} + 1} < x_{\lambda_1 + \ldots + \lambda_{k-1} + 2} < \ldots < x_{\lambda_1 + \ldots + \lambda_{k}}.$$

For a subset $J \subseteq S_n^A$ and the corresponding composition $\lambda = (\lambda_1,\ldots,\lambda_p) \vdash n$, there is a natural bijection $\Phi_J$ between left cosets of $\f S_J$ and integer sequences $a = a_1 a_2 \cdots a_n$ satisfying
\begin{enumerate}
 \item $1 \leq a_i \leq p$ for $i = 1,\ldots, n$,
 \item $\#\set{i \colon a_i = k} = \lambda_k$ for $k = 1,\ldots,p$.
\end{enumerate}
For a coset $v \f S_J$, take $a_i = k$ if and only if
$$\lambda_1+\ldots+\lambda_{k-1} + 1 \leq v^{-1}(i) \leq \lambda_1+\ldots+\lambda_{k}.$$
If $w = v u$ for $u \in \f S_J$, then $w^{-1}(i) = u^{-1} (v^{-1}(i))$; that means that the map is well defined. It is also easy to see that it is a bijection. By slight abuse of notation, we also use $\Phi_J$ as the corresponding bijection between $X_J$ and integer sequences with properties (1) and (2).

\begin{exm}
 Take $n=8$, $J=\set{s_1,s_2,s_4,s_6,s_7}$ (hence $\lambda=323$) and $v = 41573286$. Since
 $$1 \leq  v^{-1}(4),v^{-1}(1),v^{-1}(5) \leq 3 = \lambda_1,$$
 $$\lambda_1+1 = 4\leq v^{-1}(7),v^{-1}(3) \leq 5 = \lambda_1+\lambda_2, $$
 $$\lambda_1+\lambda_2+1 = 6 \leq v^{-1}(2),v^{-1}(8),v^{-1}(6) \leq 8,$$
 the corresponding coset $v \f S_J$ maps to $13211323$ with $\Phi_J$. On the other hand, the sequence $31213321$ is the image of the coset that contains $82437561$.
\end{exm}

We can describe the bijection as follows. Pick a coset representative $v$, write it in one-line notation, and draw bars in positions given by $\lambda$. In the example above, that would be $415|73|286$. The $i$-th element of $\Phi_J(v \f S_J)$ tells us in which of the ``slots'' $i$ is located. For example, $7$ is located in the second slot, so $a_7=2$. The inverse is similarly intuitive. Write the locations of $1$'s in $a=a_1\cdots a_n$ in increasing order, then the locations of $2$'s in increasing order, etc. The resulting permutation is the minimal length representative of the coset $\Phi_J^{-1}(a)$. For the example $31213321$ above, this minimal coset representative is $24837156$.

\medskip

The crucial lemma is the following.

\begin{lemma} \label{a1}
 Let $\sim_J$ denote the relation of being in the same coset of the subgroup $\f S_J$. For $s \in S_n^A$, and every $x,v \in \f S_n$, we have
 $$vsx \sim_J x \not\sim_J s x \Longrightarrow s \in \rw(v).$$
\end{lemma}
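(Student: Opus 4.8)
The plan is to analyze the hypothesis combinatorially using the one-line-notation description of cosets of $\f S_J$ given just above the lemma. Write $w = vsx$. The hypotheses say that $w \sim_J x$ but $sx \not\sim_J x$. Since $sx$ and $x$ differ exactly by the transposition $s = s_i$ applied on the left of $x$ — that is, $sx$ is obtained from $x$ by swapping the values $i$ and $i+1$ in the one-line notation — the condition $sx \not\sim_J x$ means precisely that $i$ and $i+1$ lie in different ``slots'' determined by $\lambda$; equivalently, $x^{-1}(i)$ and $x^{-1}(i+1)$ belong to different blocks $\{\lambda_1+\cdots+\lambda_{k-1}+1,\ldots,\lambda_1+\cdots+\lambda_k\}$.

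Next I would track what happens to the values $i$ and $i+1$ as we pass from $sx$ to $w = v(sx)$. Left-multiplication by $v$ permutes the \emph{values} of a permutation: if $y$ is any permutation then $(vy)(p) = v(y(p))$. So $w$ and $x$ have the property that the set of values occupying each $\lambda$-block is the same (that is what $w \sim_J x$ means, by the coset description); but going from $sx$ to $w$ we applied $v$ to all values, and $sx$ already had $i,i+1$ in \emph{different} blocks. The key observation is then: because $w \sim_J x$, the multiset of values in each block of $w$ equals that of $x$, hence in particular $i$ and $i+1$ appear in \emph{different} blocks of $w$ as well (same pair of blocks as in $x$, by the coset condition applied to $x$ vs $w$). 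So $v$ must carry the value-pair (occupying two distinct blocks in $sx$) to the value-pair $\{i,i+1\}$ (occupying those same two distinct blocks in $w$), while fixing, block-by-block, the multiset of all values. Writing $j = (sx)^{-1}(i) = x^{-1}(i+1)$ and $j' = (sx)^{-1}(i+1) = x^{-1}(i)$ (positions in different blocks), we have $v(sx(j)) = i$ forces $\{v^{-1}(i), v^{-1}(i+1)\} = \{i, i+1\}$ only if... — more carefully, $sx(j)=i$ and $w(j)$ is some value in the block of $j$; comparing with $x(j)=i+1$ in that same block shows $v$ moves value $i$ within structure, and one deduces $v$ does not fix both $i$ and $i+1$, or rather that the reduced word of $v$ must contain $s_i$.

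The cleanest route to the conclusion $s = s_i \in \rw(v)$ is: recall from the excerpt that $\rw(v)$ is exactly the set of generators appearing in any reduced word, and a standard fact (which I would invoke via \cite{bb}) is that $s_i \notin \rw(v)$ if and only if $v$ preserves the ``descent-free'' structure at $i$ in the sense that $v$ fixes setwise $\{1,\ldots,i\}$ and $\{i+1,\ldots,n\}$ — equivalently $v \in \f S_{\{1,\ldots,i\}} \times \f S_{\{i+1,\ldots,n\}}$. So I would argue the contrapositive: if $s_i \notin \rw(v)$, then $v$ maps $\{1,\ldots,i\}$ to itself and $\{i+1,\ldots,n\}$ to itself. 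Then I claim $sx \sim_J x$, contradicting the hypothesis. Indeed, since $sx$ swaps the values $i$ and $i+1$ of $x$, the permutations $sx$ and $x$ occupy the same blocks with the same value-multisets \emph{except possibly} that $i$ and $i+1$ are interchanged between two blocks. But applying $v$ (which fixes $\{1,\ldots,i\}$ and its complement setwise) to $x$ gives $w=vsx$... here I need to be careful about the direction — I would instead directly show: $w = vsx \sim_J x$ together with $s_i \notin \rw(v)$ forces $sx \sim_J x$. Write $w = v \cdot (sx)$; since $v$ permutes values within $\{1,\ldots,i\}$ and within $\{i+1,\ldots,n\}$ separately, for each $\lambda$-block $B$ the value-set of $w$ on $B$ determines, and is determined by, the value-set of $sx$ on $B$ only up to which of $i,i+1$ is present when exactly one of them is — but $v$ can swap $i \leftrightarrow i+1$ only if $s_i \in \rw(v)$. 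Hence the value-sets of $sx$ and of $w$ agree block-by-block when $s_i\notin\rw(v)$; combined with $w\sim_J x$ this gives $sx \sim_J x$, the desired contradiction.

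I expect the main obstacle to be precisely the bookkeeping in that last step: pinning down exactly why ``$v$ fixes $\{1,\ldots,i\}$ setwise'' prevents $v$ from changing which of the two blocks contains $i$ versus $i+1$, and organizing this cleanly rather than through a case analysis. The safe formulation is: if $s_i\notin\rw(v)$ then $v(\{1,\dots,i\})=\{1,\dots,i\}$, so for any permutation $y$ and any set $C\subseteq\{1,\dots,n\}$, the value-set $y(C)$ and the value-set $(vy)(C)$ satisfy $y(C)\cap\{1,\dots,i\}$ and $(vy)(C)\cap\{1,\dots,i\}$ having the same cardinality; applying this with $y=sx$, $C$ a $\lambda$-block, and noting $(sx)(C)$ and $x(C)$ differ only in that $i,i+1$ may be swapped between two blocks, one checks the block-partition induced by $sx$ into ``sets of values'' is the same as that induced by $w$, whence by transitivity with $w\sim_J x$ we get $sx\sim_J x$. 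Everything else — unwinding the coset description and the value-versus-position action of left multiplication — is routine, and I would keep it terse.
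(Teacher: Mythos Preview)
Your approach is correct and is essentially the same as the paper's: argue the contrapositive, use the standard fact that $s_i\notin\rw(v)$ forces $v$ to preserve $\{1,\ldots,i\}$ setwise, and then compare $|\,\cdot\,\cap\{1,\ldots,i\}|$ on each $\lambda$-block to force a contradiction. The paper organizes the cardinality count around a single block (the one containing $i+1$ in $x$), whereas you phrase it uniformly over all blocks, but the content is identical; just tighten your final sentence, since what you actually establish is $sx(C)=x(C)$ for every block $C$ (i.e.\ $sx\sim_J x$ directly), not $sx\sim_J w$ followed by transitivity.
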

\begin{proof}
 Suppose that $s = s_i \notin \rw(v)$ and that $x \not\sim_J s x$. The latter means that 
 $$\lambda_1+\ldots+\lambda_{k-1} + 1 \leq x^{-1}(i) \leq \lambda_1+\ldots+\lambda_{k}$$
 and
 $$\lambda_1+\ldots+\lambda_{j-1} + 1 \leq x^{-1}(i+1) \leq \lambda_1+\ldots+\lambda_{j},$$
 where $k \neq j$ ($i$ and $i+1$ are in different ``slots''). Also, we have
 $$\lambda_1+\ldots+\lambda_{j-1} + 1 \leq (sx)^{-1}(i) \leq \lambda_1+\ldots+\lambda_{j}$$
 and
 $$\lambda_1+\ldots+\lambda_{k-1} + 1 \leq (sx)^{-1}(i+1) \leq \lambda_1+\ldots+\lambda_{k},$$
 On the other hand, $s_i \notin \rw(v)$ implies that $v$ permutes the elements $1,\ldots,i$ among themselves, and $i+1,\ldots,n$ among themselves. In other words, the sets 
 $$\set{h \colon h \leq i, \lambda_1+\ldots+\lambda_{j-1} + 1 \leq (sx)^{-1}(h) \leq \lambda_1+\ldots+\lambda_{j}}$$
 and
 $$\set{h \colon h \leq i, \lambda_1+\ldots+\lambda_{j-1} + 1 \leq (vsx)^{-1}(h) \leq \lambda_1+\ldots+\lambda_{j}}$$
 have the same number of elements. But 
 $$\set{h \colon h \leq i, \lambda_1+\ldots+\lambda_{j-1} + 1 \leq (sx)^{-1}(h) \leq \lambda_1+\ldots+\lambda_{j}}$$
 has one element (namely $i$) more than
 $$\set{h \colon h \leq i, \lambda_1+\ldots+\lambda_{j-1} + 1 \leq x^{-1}(h) \leq \lambda_1+\ldots+\lambda_{j}},$$
 so $vsx \not\sim_J x$. 
\end{proof}

We can use this as follows. Denote by $\wP_{x,J}$ the projection
$$\wP_{x,J}(\wT_v) = \left\{ \begin{array}{ccl} \wT_v & : & v \sim_J x \\ 0 & : & \mbox{otherwise} \end{array} \right..$$ 

\begin{cor} \label{a2}
 Write $w = w's$, where $s \notin \rw(w')$. Then  
 \begin{equation} \label{a6}
  \wP_{x,J} \left(\wT_{w's} \cdot \wT_x\right) = \wP_{x,J}\left(\wT_{w'}\cdot \wP_{x,J}\left(\wT_s \cdot \wT_x\right)\right).
 \end{equation}
\end{cor}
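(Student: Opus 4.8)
The plan is to expand $\wT_{w's}\cdot\wT_x = \wT_{w'}\cdot(\wT_s\cdot\wT_x)$ and compare the two sides of \eqref{a6} term by term in the $\wT$-basis. Since $s\notin\rw(w')$, writing $\wT_s\cdot\wT_x$ in the $\wT$-basis (this is a combination of at most two basis elements, $\wT_{sx}$ or $\wT_x$ and $\wT_{sx}$, depending on whether $\ell(sx)$ is larger or smaller than $\ell(x)$), the left side $\wP_{x,J}(\wT_{w'}\cdot(\wT_s\cdot\wT_x))$ is a linear combination of terms of the form $c\cdot\wP_{x,J}(\wT_{w'}\cdot\wT_y)$ where $y\in\{x,sx\}$. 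The right side $\wP_{x,J}(\wT_{w'}\cdot\wP_{x,J}(\wT_s\cdot\wT_x))$ is the \emph{same} combination, except that the inner projection $\wP_{x,J}$ first kills any summand $\wT_y$ with $y\not\sim_J x$. So it suffices to show that whenever $y\in\{x,sx\}$ and $y\not\sim_J x$, the term $\wP_{x,J}(\wT_{w'}\cdot\wT_y)$ vanishes; then the two sides agree.

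The case $y=x$ is vacuous, so the only thing to check is: if $sx\not\sim_J x$, then $\wP_{x,J}(\wT_{w'}\cdot\wT_{sx})=0$, i.e. no basis element $\wT_v$ with $v\sim_J x$ appears in the expansion of $\wT_{w'}\cdot\wT_{sx}$. Here is where Lemma \ref{a1} enters. Any $v$ appearing in $\wT_{w'}\cdot\wT_{sx}$ has the form $v = v'\cdot sx$ for some $v'$ with $\rw(v')\subseteq\rw(w')$ — this is the standard fact that the product $\wT_{w'}\wT_z$ is supported on elements $v'z$ with $v'\leq w'$ in Bruhat order (equivalently, obtained from $w'$ by deleting generators in a reduced word and multiplying what remains), so in particular $\rw(v')\subseteq\rw(w')$ and hence $s\notin\rw(v')$. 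If such a $v = v'sx$ were $\sim_J x$, then we would have $v'sx\sim_J x$ and $x\not\sim_J sx$, so Lemma \ref{a1} forces $s\in\rw(v')$, a contradiction. Hence every summand $\wT_v$ of $\wT_{w'}\wT_{sx}$ has $v\not\sim_J x$, and $\wP_{x,J}(\wT_{w'}\wT_{sx})=0$.

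The main (and only) technical point to pin down is the support statement used above: that $\wT_{w'}\wT_z = \sum_{v'} d_{v'}\,\wT_{v'z}$ with the sum over $v'$ obtainable from a reduced word for $w'$ by deleting letters, so that $s\notin\rw(w')$ propagates to $s\notin\rw(v')$ for every contributing $v'$. This follows by an easy induction on $\ell(w')$: write $w' = s' w''$ with $\ell(w')=\ell(w'')+1$, so $\wT_{w'}\wT_z = \wT_{s'}(\wT_{w''}\wT_z)$, apply the inductive hypothesis to $\wT_{w''}\wT_z$, and then multiplying each term $\wT_{v''z}$ on the left by $\wT_{s'}$ produces either $\wT_{s'v''z}$ (with $\rw$ contained in $\{s'\}\cup\rw(v'')\subseteq\rw(w')$) or, via the quadratic relation, a combination of $\wT_{s'v''z}$ and $\wT_{v''z}$; in either case the new indices have $\rw\subseteq\rw(w')$. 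With this in hand the corollary follows immediately from the term-by-term comparison above.
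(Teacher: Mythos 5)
Your proof is correct and follows essentially the same route as the paper's: expand $\wT_s\wT_x$ into at most two basis terms, observe that the two sides can only differ by $\wP_{x,J}(\wT_{w'}\wT_{sx})$ when $sx\not\sim_J x$, and kill that term by combining Lemma \ref{a1} with the fact that $\wT_{w'}\wT_{sx}$ is supported on $\wT_{v'sx}$ for $v'$ a subword of $w'$ (a fact the paper asserts and you verify by induction on $\ell(w')$, which is a welcome extra detail but not a different argument).
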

\begin{proof}
 We have $\wT_{w's} \wT_x = \wT_{w'} \left(\wT_s \wT_x\right)$. Recall that if $\ell(sx)>\ell(x)$, then $\wT_s \wT_x = \wT_{sx}$, and if $\ell(sx)<\ell(x)$, then $\wT_s \wT_x = \wT_{sx} + R \wT_x$, where $R = q^{1/2}-q^{-1/2}$. If $sx \sim_J x$, then $\wP_{x,J}\left(\wT_s \cdot \wT_x\right) = \wT_s \wT_x$, and the equality follows. On the other hand, if $sx \not\sim_J x$, then, by the lemma, $vsx\not\sim_J x$ for every subword $v$ of $w'$. But $\wT_{w'}\wT_{sx}$ is a linear combination of $\wT_{vsx}$ for $v$ a subword of $w'$, and therefore $\wP_{x,J}\left(\wT_{w'}\wT_{sx}\right) = 0$. If $\ell(sx)>\ell(x)$, then both sides of \eqref{a6} are equal to $0$, and if $\ell(sx)<\ell(x)$, they are both equal to 
 $$R \wP_{x,J} \left(\wT_{w'} \cdot \wT_x\right).$$
 This completes the proof.
\end{proof}

An immediate consequence of the corollary is the following formula, which holds if $s \notin \rw(w'),\rw(w'')$ and $\rw(w') \cap \rw(w'') = \emptyset$:

\begin{equation} \label{a3}
 \wP_{x,J} \left(\wT_{w'sw''} \cdot \wT_x\right) = \wP_{x,J}\left(\wT_{w'}\cdot \wP_{x,J}\left(\wT_s \cdot \wP_{x,J}\left(\wT_{w''} \cdot \wT_x\right)\right)\right).
\end{equation}

Indeed, this is exactly Corollary \ref{a2} when $\ell(w'')=0$, assume that it holds when $\ell(w'')=\ell$. Then
$$\wP_{x,J} \left(\wT_{w'sw'''s'} \cdot \wT_x\right) = \wP_{x,J} \left(\wT_{w'sw'''} \cdot \wP_{x,J}\left(\wT_{s'}\wT_x\right)\right)= $$
$$= \wP_{x,J}\left(\wT_{w'}\cdot \wP_{x,J}\left(\wT_s \cdot \wP_{x,J}\left(\wT_{w'''} \cdot \wP_{x,J}\left(\wT_{s'} \wT_x\right)\right)\right)\right) = $$
$$= \wP_{x,J}\left(\wT_{w'}\cdot \wP_{x,J}\left(\wT_s \cdot \wP_{x,J}\left(\wT_{w'''s'} \cdot \wT_x\right)\right)\right),$$
which proves it for $\ell(w'')=\ell+1$.

\medskip

Now assume that we are given a parabolic element $w$ of type $K$. For a transversal element $x \in X_J$, write $a = \Phi_J(x)$. We have one of the following two cases.

\medskip

Suppose first that there is an $i$ such that $s_i$ appears in a reduced word for $w$, $w = w' s_i w''$, and such that $k = a_i < a_{i+1}$. By definition, that means that
$$x^{-1}(i) \leq \lambda_1 + \ldots + \lambda_k < x^{-1}(i+1).$$ 
Furthermore, every $x' \sim_J x$ also satisfies 
$$(x')^{-1}(i) \leq \lambda_1 + \ldots + \lambda_k < (x')^{-1}(i+1).$$ 
In particular, $\ell(sx') > \ell(x')$ (which means $\wT_s \wT_{x'} = \wT_{sx'}$) and $sx' \not\sim_J x' \sim_J x$ (which means $\wP_{x,J}\left(\wT_{sx'}\right) = 0$). In other words, $\wP_{x,J}\left(\wT_s \cdot \wP_{x,J}\left(\wT_{w''} \cdot \wT_x\right)\right) = 0$, and equation \eqref{a3} implies that
$$\wP_{x,J} \left(\wT_{w} \wT_x\right) = 0.$$ 

\medskip

Now suppose that for every $i$ for which $s_i$ appears in a reduced word for $w$, $w = w' s_i w''$, we have $a_i \geq a_{i+1}$. We prove by induction on $\ell(w) = \# K$ that in this case,
$$\wP_{x,J} \left(\wT_{w} \wT_x\right) = R^{d_K(a)} \wT_{w_{x,J} x},$$
where $d_K(a)$ denotes the number of elements in the set $\set{i \colon s_i \in K, a_i > a_{i+1}}$, $w_{x,J}$ is the subword of $w$ consisting of $s_i \in J$ with $a_i = a_{i+1}$, and $\ell(w_{x,J} x) = \ell(w_{x,J}) + \ell(x)$. 

\medskip

The claim is obvious for $w = e$. Suppose that it holds for $w'$, and suppose that $w = sw'$ with $\ell(w) = \ell(w') + 1$. We have to evaluate
$$\wP_{x,J} \left(\wT_{w} \wT_x\right) = \wP_{x,J} \left(\wT_{s} \wT_{w'} \wT_x\right) = \wP_{x,J} \left( \wT_s \wP_{x,J}\left(\wT_{w'} \wT_x \right)\right) = R^{d_{K'}(a)} \wP_{x,J}\left( \wT_s \wT_{w_{x,J}' x}\right),$$
where $K' = K \setminus \set s$, $w_{x,J}'$ is the subword of $w'$ consisting of $s_i$ with $a_i = a_{i+1}$. If $s = s_i$ and $a_i > a_{i+1}$, then $v^{-1}(i+1) < v^{-1}(i)$, $s_i v \not\sim_J v$ and $\ell(s_i v) < \ell(v)$ for every $v \sim_J x$. That implies $\ell(s_i w_{x,J}' x) < \ell(w_{x,J}'x)$ and $\wP_{x,J}\left( \wT_s \wT_{w_{x,J}' x}\right) = R \wT_{w_{x,J}' x}$. This means that
$$\wP_{x,J} \left(\wT_{w} \wT_x\right) = R^{d_{K'}(a)+1} \wT_{w_{x,J}' x} = R^{d_K(a)} \wT_{w_{x,J} x}.$$
On the other hand, assume $s = s_i$ and $a_i = a_{i+1} = k$. That means 
$\lambda_1+\ldots+\lambda_{k-1} + 1 \leq x^{-1}(i), x^{-1}(i+1) \leq \lambda_1+\ldots+\lambda_{k}$. Also, because $x \in X_J$, we have $x^{-1}(i+1) = x^{-1}(i) + 1$. Since $w_{x,J}'$ does not contain $s_i$, it permutes the elements $1,\ldots,i$ among themselves, and the elements $i+1,\ldots,n$ among themselves. Again, since $x \in X_J$ and $w_{x,J}' x \sim_J x$, we have $(w_{x,J}' x)^{-1}(i) < (w_{x,J}' x)^{-1}(i+1)$. Therefore $\ell(s_i w_{x,J}' x) > \ell(w_{x,J}' x)$, $\wT_{s_i} \wT_{w_{x,J}' x} = \wT_{s_i w_{x,J}' x}$ and
$$\wP_{x,J} \left(\wT_{w} \wT_x\right) = R^{d_{K'}(a)} \wT_{s_iw_{x,J}' x} = R^{d_K(a)} \wT_{w_{x,J} x}.$$
Also, $\ell(s_i w_{x,J}' x) = \ell(w_{x,J} x) = \ell(w_{x,J}' x) + 1 = \ell(w_{x,J}') + \ell(x) + 1 = \ell(w_{x,J}) + \ell(x)$.

\medskip

Theorem \ref{thm1} follows immediately. Indeed, by Lemma \ref{thm6}, we have
$$\eta_\lambda(\wT_w) = \sum_{x \in X_J} \sum_{u \in \f S_J} q^{\ell(u)/2} [\wT_{xu}] \wT_w \wT_x = $$
$$=\sum_{x \in X_J} \sum_{u \in \f S_J} q^{\ell(u)/2} [\wT_{xu}] \wP_{x,J}(\wT_w \wT_x) = \sum_{x \in X_J} q^{\ell(w_{x,J})/2} R^{d_K(a)} = \sum_a q^{e_K(a)/2} R^{d_K(a)},$$
where the sum is over all sequences $a$ satisfying (1), (2) and (3) from Theorem \ref{thm1}. But then
$$\eta_\lambda(T_w) = \sum_a q^{\ell(w)/2} q^{e_K(a)/2} (q^{1/2}-q^{-1/2})^{d_K(a)} = \sum_a q^{e_K(a)} (q-1)^{d_K(a)},$$
where we used the fact that $\ell(w) = e_K(a) + d_K(a)$. The proof for $\epsilon_\lambda$ is analogous.

\medskip

We can use our description of characters induced from Young subalgebras to prove combinatorial formulas for irreducible characters $\chi_\lambda$ of $H_n^A$. In order to do this, we need \eqref{intro5} and the following lemma. Note that this result was already proved in \cite{RamRemmel}, see equation (22) and the remark following it. Our proof, though essentially equivalent, is completely direct and elementary (in particular, it does not need Littlewood-Richardson rule, Pieri rule or $\lambda$-ring manipulations). See also \cite{RRW}.

\medskip

Recall that the ordinary Murnaghan-Nakayama rule states that for any partition $\mu$ and $r \in \mathbb{N}$, we have
\begin{equation} \label{a4}
s_\mu \cdot p_r = \sum_\lambda (-1)^{\hght(\lambda/\mu)} s_\lambda, 
\end{equation}
where the sum is over all partitions $\lambda \supseteq \mu$ for which $\lambda/\mu$ is a border strip of size $r$. See \cite[Theorem 7.17.1]{StanEC}.

\medskip

Define \emph{quantum power symmetric function} $\overline p_\mu(y_1,y_2,\ldots)$ for a composition $\mu$ by
$$\overline p_\mu = \overline p_{\mu_1} \cdots \overline p_{\mu_s},$$
where
$$\overline p_r = \sum_{J} q^{N_=(J)} (q-1)^{N_<(J)} y_{i_1}y_{i_2} \cdots y_{i_n};$$
here $J$ runs over multisets $(i_1,\ldots,i_r)$ with $1 \leq i_1 \leq \ldots \leq i_r \leq n$, and $N_=(J) = \# \set{j \colon i_j = i_{j+1}}$ and $N_<(J) = \# \set{j \colon i_j < i_{j+1}}$. For example,
$$\overline p_3 = q^2 m_3 + q(q-1) m_{21} + (q-1)^2 m_{111}.$$
If $q=1$, we get ordinary power symmetric functions $p_\mu$. Furthermore, define
$$\widetilde p_\mu = \widetilde p_{\mu_1} \cdots \widetilde p_{\mu_s},$$
where
$$\widetilde p_r = \sum_{J} (-1)^{N_=(J)} (q-1)^{N_<(J)} y_{i_1}y_{i_2} \cdots y_{i_n};$$
here $J$ runs over the same set of multisets. For example,
$$\overline p_3 = m_3 - (q-1) m_{21} + (q-1)^2 m_{111}.$$
If $q=1$, we get $\sigma_\mu p_\mu$, where $\sigma_\mu$ is $1$ if the number of even parts of $\mu$ is even, and $-1$ otherwise.
 
\medskip

The following generalizes \eqref{a4}.

\begin{lemma}[quantum Murnaghan-Nakayama rule] \label{a5}
 For any partition $\mu$ and $r \in \mathbb{N}$ we have
 \begin{equation} \label{a7}
  s_\mu \cdot \overline p_r = \sum_\lambda (-1)^{\hght(\lambda/\mu)} q^{\wdth(\lambda/\mu)} (q-1)^{\strp(\lambda/\mu) - 1} s_\lambda, 
 \end{equation}
 where the sum runs over all partitions $\lambda \supseteq \mu$ for which $\lambda/\mu$ is a broken border strip of size $r$.
\end{lemma}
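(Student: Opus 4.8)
The plan is to compute, for each partition $\lambda\supseteq\mu$, the coefficient of $s_\lambda$ in $s_\mu\cdot\overline p_r$. By the adjointness of multiplication and skewing with respect to the Hall inner product this coefficient is $\langle s_\mu\,\overline p_r,s_\lambda\rangle=\langle\overline p_r,s_{\lambda/\mu}\rangle$, and since the two factors are homogeneous of degrees $r$ and $|\lambda/\mu|$, it vanishes unless $|\lambda/\mu|=r$; this is why the sum in \eqref{a7} is over shapes of size $r$. To handle the pairing I would first rewrite $\overline p_r$ by grouping the multisets $J$ in its definition according to the partition shape $\alpha$ of their underlying content, which gives $\overline p_r=\sum_{\alpha\vdash r}q^{\,r-\ell(\alpha)}(q-1)^{\ell(\alpha)-1}m_\alpha$ (matching the displayed expansion of $\overline p_3$). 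Since $\langle m_\alpha,h_\beta\rangle=\delta_{\alpha\beta}$, this yields the one rule everything rests on: if $c_1,\dots,c_k$ are nonnegative integers with $\sum c_i=r$, then $\langle\overline p_r,h_{c_1}\cdots h_{c_k}\rangle=q^{\,r-j}(q-1)^{j-1}$ where $j=\#\{i:c_i>0\}$, and the pairing is $0$ if $\sum c_i\ne r$ or if some $c_i<0$ (where $h_{c_i}=0$).

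\medskip

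From this two reductions are immediate. First, $\langle\overline p_r,-\rangle$ is ``multiplicative up to a power of $q-1$'': for $g_1,\dots,g_t$ homogeneous of positive degrees $d_1,\dots,d_t$ with $\sum d_i=r$, one has $\langle\overline p_r,g_1\cdots g_t\rangle=(q-1)^{t-1}\prod_i\langle\overline p_{d_i},g_i\rangle$ -- by bilinearity it is enough to check this when each $g_i$ is a product of complete homogeneous functions, where it is the elementary identity $q^{\,r-L}(q-1)^{L-1}=(q-1)^{t-1}\prod_i q^{\,d_i-l_i}(q-1)^{l_i-1}$ with $l_i$ the number of factors of $g_i$ and $L=\sum l_i$. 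Second, as every row and every column of a skew shape is a contiguous interval, a disconnected skew shape $\lambda/\mu$ splits as a block-diagonal union of its connected components $C_1,\dots,C_t$ (occupying pairwise disjoint sets of rows and of columns), so $s_{\lambda/\mu}=\prod_i s_{C_i}$ by the evident bijection on semistandard tableaux. Together these reduce the whole Lemma to the following claim for a \emph{connected} skew shape $D$ of size $d$: $\langle\overline p_d,s_D\rangle$ equals $(-1)^{\hght D}q^{\wdth D}$ when $D$ is a border strip, and $0$ when $D$ contains a $2\times2$ square. Indeed, running this back through multiplicativity turns a broken border strip $\lambda/\mu$ with connected pieces $B_1,\dots,B_s$ into $(q-1)^{s-1}\prod_i(-1)^{\hght B_i}q^{\wdth B_i}=(-1)^{\hght(\lambda/\mu)}q^{\wdth(\lambda/\mu)}(q-1)^{\strp(\lambda/\mu)-1}$, while any shape that is not a broken border strip either is connected and contains a $2\times2$ square, or has a component that does, and the pairing is $0$.

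\medskip

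For the connected case -- the heart of the matter -- I would apply the Jacobi--Trudi identity $s_D=\det\bigl(h_{\lambda_i-\mu_j-i+j}\bigr)_{1\le i,j\le\ell}$ (with $D=\lambda/\mu$, $\ell=\ell(\lambda)$), expand the determinant, and pair each term with $\overline p_d$ by the rule above, obtaining
$$\langle\overline p_d,s_D\rangle=\sum_{\sigma\in S_\ell}\sign(\sigma)\,\bigl[\,\lambda_i-\mu_{\sigma(i)}-i+\sigma(i)\ge0\ \text{for all }i\,\bigr]\,q^{\,d-j(\sigma)}(q-1)^{j(\sigma)-1},$$
where $j(\sigma)=\#\{i:\lambda_i-\mu_{\sigma(i)}-i+\sigma(i)>0\}$. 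Reading $\lambda_i-i$ and $\mu_j-j$ as positions of beads on an abacus, the contributing $\sigma$'s are precisely the ``weakly downward'' matchings of beads; their structure is explicit. When $D$ is a border strip, passing from $\mu$ to $\lambda$ slides a single bead down past the $\hght D$ beads lying strictly between its source and target, the contributing $\sigma$'s are in a \emph{natural bijection} with the subsets $S$ of those $\hght D$ intervening beads (the subset $S$ encoding which of them the bead ``pauses at'', so that $j(\sigma)=|S|+1$ and $\sign(\sigma)=(-1)^{\hght D+|S|}$), and the sum telescopes by the binomial theorem:
$$(-1)^{\hght D}\sum_{m=0}^{\hght D}\binom{\hght D}{m}(-1)^m q^{\,d-1-m}(q-1)^m=(-1)^{\hght D}q^{\,d-1-\hght D}=(-1)^{\hght D}q^{\wdth D}.$$
When instead $D$ contains a $2\times2$ square, I would pair off the contributing $\sigma$'s by a \emph{natural sign-reversing involution} that, reading off the offending square, modifies $\sigma$ on the two rows (or two columns) it occupies in a way that flips $\sign(\sigma)$ while keeping all entries $\ge0$ and leaving $j(\sigma)$ unchanged, so that the sum is $0$; expanding $q=(q-1)+1$ first, to make every weight a $\mathbb{Z}$-linear combination of powers of $q-1$, turns both manipulations into purely combinatorial statements about lattice-path-like configurations and brings the argument in line with the classical determinantal proof of the Murnaghan--Nakayama rule \eqref{a4}.

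\medskip

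The step I expect to be the real obstacle is the involution for shapes containing a $2\times2$ square: the naive transposition of the two rows of the square need not preserve the nonnegativity condition, so the involution has to be set up to act only where it is legal and one must argue that the remaining configurations either do not arise or cancel among themselves; keeping the statistic $j(\sigma)$ -- equivalently the monomial $q^{d-j}(q-1)^{j-1}$ -- invariant throughout is the delicate point. For the border-strip case the bijection with subsets of intervening beads and the sign and exponent bookkeeping are routine, and at $q=1$ the whole argument degenerates to the ordinary Murnaghan--Nakayama rule, so this is a genuine $q$-deformation of the classical proof. Throughout, only the Jacobi--Trudi identity and elementary combinatorics are used -- in particular no Pieri rule, no Littlewood--Richardson rule, and no $\lambda$-ring manipulation, as claimed.
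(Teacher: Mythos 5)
Your overall route is sound and is essentially the Jacobi--Trudi dual of the paper's argument: the paper works with bialternants, writing $a_{\mu+\delta}\cdot m_\nu=\sum a_{\mu+\delta+\sigma(\nu)}$ and extracting the coefficient of $a_{\lambda+\delta}$, and the data there (a composition $\nu$ together with $\pi$ satisfying $\pi(\nu+\mu+\delta)=\lambda+\delta$, weighted by $q^{r-s}(q-1)^{s-1}$ with $s$ the number of nonzero parts of $\nu$) is exactly your data (a permutation $\sigma$ with all Jacobi--Trudi exponents nonnegative, weighted by $q^{d-j(\sigma)}(q-1)^{j(\sigma)-1}$), via $\sigma=\pi^{-1}$ and $\nu_j=(\lambda+\delta)_{\pi(j)}-(\mu+\delta)_j$. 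Your two preliminary reductions (multiplicativity of $\langle\overline p_r,-\rangle$ up to a power of $q-1$, and factorization over connected components) are correct and are a clean way to isolate the $(q-1)^{\strp-1}$; the paper instead treats the whole broken border strip at once, summing over subsets of the set $\p I$ of non-top rows of the strips, which has size $\hght(\lambda/\mu)$, and the same telescoping $(q-(q-1))^{|\p I|}=1$ appears. Your border-strip computation is correct.

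The genuine gap is the cancellation for a shape containing a $2\times2$ square. This is not a routine detail to be deferred: it is precisely where the broken-border-strip condition enters, and you yourself observe that the naive transposition of the two rows of an offending square can violate the nonnegativity constraint, without then supplying a replacement. The paper closes exactly this gap with a concrete involution: take the \emph{maximal} $i$ with $\mu_{i-1}+1<\lambda_i$; maximality forces $\pi(j)\le j+1$ for $j\ge i$ and $\pi(i-1)\le i$ for every contributing $\pi$, so there is a smallest $k\ge i$ with $\pi(k)\le k$, the interval $i,\dots,k-1$ is mapped by the shift $j\mapsto j+1$, and one sets $\varphi(\pi)=\pi\cdot(i-1,k)$. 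The point of the maximality of $i$ is the chain of inequalities $(\lambda+\delta)_{\pi(i-1)}\ge(\mu+\delta)_{i-1}>(\mu+\delta)_i\ge(\lambda+\delta)_{i+1}$, which guarantees that after the swap both affected exponents remain \emph{strictly} positive; hence nonnegativity is preserved and the number of nonzero exponents --- your $j(\sigma)$ --- is unchanged, so the involution is weight-preserving as well as sign-reversing. This transplants verbatim into your Jacobi--Trudi setup (restricted to a connected component containing a $2\times2$ square), and with it your proof is complete; without it, the key half of the lemma is only asserted.
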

\begin{proof}
 We have to find a quantum version of the proof of \cite[Theorem 7.17.1]{StanEC}. Fix $n$, let $\delta = (n-1,n-2,\ldots,0)$, and, for $\alpha \in \mathbb{N}^n$, write $a_\alpha = \det(y_i^{\alpha_j})_{i,j=1}^n$. The classical definition of Schur functions says that $a_{\lambda+\delta}/a_\delta = s_\lambda(y_1,\ldots,y_n)$. It is therefore enough to prove
 $$a_{\mu + \delta} \cdot \overline p_r = \sum_\lambda (-1)^{\hght(\lambda/\mu)} q^{\wdth(\lambda/\mu)} (q-1)^{\strp(\lambda/\mu) - 1} a_{\lambda + \delta},$$
 where the sum goes over all partitions $\lambda \supseteq \mu$ for which $\lambda/\mu$ is a broken border strip of size $r$ and $n$ is at least the number of parts of $\lambda$, and let $n \to \infty$ in order to prove the lemma. Throughout the proof, all functions depend on $y_1,\ldots,y_n$.\\
 It is easy to see that for a partition $\nu$ with $n$ parts, we have
 $$a_{\mu + \delta} \cdot m_\nu = \sum a_{\mu + \delta + \sigma(\nu)},$$
 where the sum runs over all permutations $\sigma$ of $n$ (here $\sigma(\nu)$ is the composition we get if we shuffle the entries of $\nu$ according to $\sigma$, i.e. $\sigma(\nu)_{\sigma(i)} = \nu_i$). For example, for $\mu = 31$, $n=4$ and $\nu = 2210$, then
 $$a_{6310} \cdot m_{221} = $$
 $$= a_{8520} + a_{8511} + a_{8430} + a_{8412} + a_{8331} + a_{8322} + a_{7332} + a_{7512} + a_{7530} + a_{6531} + a_{6522} + a_{6432}.$$
 Of course, for every composition $\alpha$, $a_\alpha$ is equal to $\pm a_\mu$ for some partition $\mu$, and if $\alpha$ has a repeated part, then $a_\alpha = 0$. For example,
 $$a_{6310} m_{221} = a_{8520} + a_{8430} - a_{8421} - a_{7521} + a_{7530} + a_{6531} + a_{6432}.$$
 Let us find the coefficient of $a_{\lambda + \delta}$ in $a_{\mu + \delta} \cdot \overline p_r$. Assume without loss of generality that $\lambda$ and $\mu$ have $n$ parts (some of which can be $0$). We divide the calculations into two parts.\\
 Assume first that $\lambda/\mu$ is a broken border strip tableau (of size $r$). As a running example, let us take $\lambda = 5431$, $\mu = 33$, $r = 7$ and $n = 4$. We want to find the coefficient of $a_{8641}$ in $a_{6510} \cdot \overline p_7$. Since $\lambda/\mu$ is a broken border strip tableau, we have $\lambda_i \leq \mu_{i-1} + 1$ for $i \geq 2$. In other words, $(\lambda + \delta)_i \leq (\mu + \delta)_{i-1}$. We want to find all partitions $\nu$ (with possible zeros at the end) of $r$ so that $\sigma(\nu) + \mu + \delta$ is a permutation of $\lambda + \delta$ for some $\sigma$. Equivalently, we want to find all compositions $\nu$ so that $\pi(\nu + \mu + \delta) = \lambda + \delta$ for some permutation $\pi \in \f S_n$. In our example, $\pi_1((2,1,3,1) + (6,5,1,0)) = \pi_2((0,3,3,1) + (6,5,1,0)) = \pi_3((2,1,0,4) + (6,5,1,0)) = \pi_4((0,3,0,4) + (6,5,1,0)) = (8,6,4,1)$ for $\pi_1 = 1234$, $\pi_2 = 2134$, $\pi_3 = 1243$ and $\pi_4 = 2143$. Note that the signs of these permutations are $1,-1,-1,1$, respectively.\\
 Since $(\nu + \mu + \delta)_{i-1} \geq (\mu + \delta)_{i-1} \geq (\lambda + \delta)_i$, we must have $\pi(i) \leq i+1$ for $i \leq n-1$. Furthermore, if $\lambda_i \leq \nu_{i-1}$, then $\pi(i) \leq i$. Denote by $\p I \subseteq \set{2, \ldots, n }$ the set of $i$ with $\lambda_i = \nu_{i-1} + 1$. In our example, $\p I = \set{2,4}$. Note that the elements in $\p I$ correspond to rows that contain cells of the broken border strip $\lambda/\mu$, but are not the first row of a border strip of $\lambda/\mu$. Furthermore, denote by $\p K$ the set of all $i$ with $\lambda_i = \nu_i$. The elements of $\p K$ correspond to empty rows of $\lambda/\mu$.\\
 To a composition $\nu$ with $\pi(\nu + \mu + \delta) = \lambda + \delta$ for some $\pi$, assign $\p I_\nu \subseteq \p I$ by $\p I_\nu = \set{ i \in \p I \colon \pi(i) = i + 1}$. In our example, we have $\p I_{2131} = \emptyset$, $\p I_{0331} = \set{ 2}$, $\p I_{2104} = \set{ 4}$, $\p I_{0304} = \set{ 2,4}$. It is easy to see that this assignment is a bijection between compositions $\nu$ for which $\pi(\nu + \mu + \delta) = \lambda + \delta$ for some permutation $\pi \in \f S_n$, and subsets of $\p I$. It remains to figure out the appropriate sign and weight, and to sum over all subsets of $\p I$. In the running example, the weights of $m_{3211},m_{3310},m_{4210},m_{4300}$ in $\overline p_7$ are $q^3(q-1)^3,q^4(q-1)^2,q^4(q-1)^2,q^5(q-1)$, respectively, so the coefficient of $a_{5431}$ in $a_{33} \cdot \overline p_7$ is
 $$q^3(q-1)^3 - q^4(q-1)^2 - q^4(q-1)^2 + q^5(q-1) = q^3(q-1)\left( (q-1)^2 - 2q(q-1) + q^2\right) = $$
 $$= q^3(q-1) (q - (q-1))^2 = q^3(q-1).$$
 Note that $\lambda/\mu$ is composed of two border strips of widths $2$ and $1$ and heights $1$ and $1$, so the result matches with \eqref{a7}.\\
 For $\p J \subseteq \p I$, the corresponding $\pi$ satisfies $\pi(i) = i+1$ for $i \in \p J$, and the remaining elements appear in increasing order in $\pi$. In other words, the disjoint cycle decomposition of $\pi$ is of the form $(1, 2, \ldots, i_1)(i_1+1,i_1+2,\ldots,i_2)\cdots$, where $i_0=1,i_1,i_2,\ldots$ are precisely the elements of $\set{ 1,\ldots,n} \setminus \p J$. Since cycles of odd length are even permutations and cycles of even length are odd permutations, that means that the sign of $\pi$ is $(-1)^{|\p J|}$.\\
 Recall that the weight of $m_\nu$ in $\overline p_r$ is $q^{r - s} (q-1)^{s - 1}$, where $s$ is the number of different non-zero parts of $\nu$. That means that for a subset $\p I_\nu$ of $\p I$, $m_\nu$ appears with weight $q^{r - n + |\p I_\nu| + |\p K|} (q-1)^{n - |\p I_\nu| - |\p K| - 1}$. In turn, this implies that the coefficient of $a_{\lambda + \delta}$ in $a_{\mu + \delta} \cdot \overline p_r$ is
 $$\sum_{\p J \subseteq \p I} (-1)^{|\p J|} q^{r - n + |\p J| + |\p K|} (q-1)^{n - |\p J| - |\p K| - 1} = $$
 $$=(-1)^{|\p I|} q^{r-n + |\p K|} (q-1)^{n-|\p I| - |\p K| - 1} \sum_{k = 0}^{|\p I|} \binom {|\p I|} k (-1)^{|\p I| - k} q^k (q-1)^{|\p I| - k} =$$
 $$=  (-1)^{|\p I|} q^{r-n + |\p K|} (q-1)^{n-|\p I| - |\p K| - 1} \left( q - (q-1) \right)^{|\p I|} = (-1)^{\hght(\lambda/\mu)} q^{\wdth(\lambda/\mu)} (q-1)^{\strp(\lambda/\mu) - 1}.$$
 The second part of the proof deals with the case when $\lambda/\mu$ is \emph{not} a broken border strip. Let us start with an example. Choose $\lambda=(6,5,4,3,2)$, $\mu = (4,2,2,2)$, $r = 10$ and $n = 5$. Then $\lambda + \delta = (10,8,6,4,2)$ and $\mu + \delta = (8,5,4,3,0)$, so we get the following table of compositions $\nu$ and permutations $\pi$ for which $\pi(\nu + \mu + \delta) = \lambda + \delta$ ($\weight(\nu)$ denotes the coefficient of $m_\nu$ in $\overline p_r$):
 $$\begin{array}{c|c|c|c}
 \nu & \pi & \sign \pi & \weight(\nu) \\
 \hline
 23212 & 12345 & +1 & q^{5}(q-1)^{4} \\ 
 23032 & 12435 & -1 & q^{6}(q-1)^{3} \\
 21412 & 13245 & -1 & q^{5}(q-1)^{4} \\
 21052 & 13425 & +1 & q^{6}(q-1)^{3} \\
 05212 & 21345 & -1 & q^{6}(q-1)^{3} \\
 05032 & 21435 & +1 & q^{7}(q-1)^{2} \\
 01612 & 23145 & +1 & q^{6}(q-1)^{3} \\
 01072 & 23415 & -1 & q^{7}(q-1)^{2}
 \end{array}$$
 The involution
 $$12345 \stackrel \varphi \longleftrightarrow 13245, \quad 12435 \stackrel \varphi \longleftrightarrow 13425, \quad 21345 \stackrel \varphi \longleftrightarrow 23145, \quad 21435 \stackrel \varphi \longleftrightarrow 23415$$
 reverses signs and preserves weights, so the total coefficient of $a_{65432}$ in $a_{4222} \cdot \overline p_{10}$ is $0$.\\
 In general, a sign-reversing weight-preserving involution is constructed as follows. Take the maximal $i$ for which $\mu_{i-1} + 1 < \lambda_i$ (in our example, $i=3$). Such an $i$ exists because $\lambda/\mu$ is not a broken border strip. Choose a composition $\nu$ and permutation $\pi$ with $\pi(\nu + \mu + \delta) = \lambda + \delta$. Note that the maximality of $i$ implies $(\lambda + \delta)_{\pi(i-1)} = (\nu + \mu + \delta)_{i-1} \geq (\mu + \delta)_{i-1} > (\mu + \delta)_i \geq(\lambda + \delta)_{i+1}$ and so $\pi(i-1) \leq i$. Similarly, for $i \leq j < n$, we have $(\lambda + \delta)_{\pi(j)} = (\nu + \mu + \delta)_j \geq (\mu + \delta)_j \geq (\lambda + \delta)_{j+1}$ and $\pi(j) \leq j+1$. Choose the smallest $k \geq i$ with $\pi(k) \leq k$. Part of the permutation $\pi$ is
 $$\begin{pmatrix} \ldots & i-1 & i & i+1 & \ldots & k-1 & k & \ldots \\
 			  \ldots & \pi(i-1) & i+1 & i+2 & \ldots & k & \pi(k) & \ldots \end{pmatrix}.$$
 Note that $\pi(i-1), \pi(k) \leq i$. In the example, we have $k=3,4,3,4,3,4,3,4$, respectively.\\
 Define $\varphi(\pi) = \pi \cdot (i-1,k)$. Then $\varphi(\pi)$ has the following form:
 $$\begin{pmatrix} \ldots & i-1 & i & i+1 & \ldots & k-1 & k & \ldots \\
  			  \ldots & \pi(k) & i+1 & i+2 & \ldots & k & \pi(i-1) & \ldots \end{pmatrix}.$$
 Clearly, $\varphi$ is a sign-reversing involution. Furthermore,
 $$\nu_{i-1} = (\nu + \mu + \delta)_{i-1} - (\mu + \delta)_{i-1} = (\lambda + \delta)_{\pi(i-1)} - (\mu + \delta)_{i-1} \geq (\lambda + \delta)_i - (\mu + \delta)_{i-1} > 0$$
 and
 $$\nu_{k} = (\nu + \mu + \delta)_{k} - (\mu + \delta)_{k} = (\lambda + \delta)_{\pi(k)} - (\mu + \delta)_{k} > (\lambda + \delta)_i - (\mu + \delta)_{i-1} > 0.$$
 These are the only entries that change in $\nu$ when we take $\varphi(\pi)$ instead of $\pi$, and since they are both strictly positive before and after the change, $\varphi$ preserves weight.\\
 This completes the proof.
\end{proof}

\begin{proof}[Proof of Theorem \ref{thm5}]
 By equation \eqref{intro5},
 $$\chi_\lambda (T_w) = \sum_{\nu} K_{\nu,\lambda}^{-1} \eta_\nu (T_w).$$
 Now note that Theorem \ref{a1} says that
 $$\eta_\nu (T_w)= [m_\nu] \overline p_\mu,$$
 where $\mu$ is the composition corresponding to $w$. Therefore
 $$\chi_\lambda (T_w) = \sum_{\nu} K_{\nu,\lambda}^{-1} [m_\nu] \overline p_\mu = \sum_{\nu} K_{\nu,\lambda}^{-1} \langle \overline p_\mu, h_\nu \rangle = \left\langle \overline p_\mu, \sum_{\nu} K_{\nu,\lambda}^{-1} h_\nu \right\rangle = \langle \overline p_\mu, s_\lambda \rangle,$$
 where $\langle \cdot,\cdot \rangle$ is the standard scalar product defined by $\langle h_\lambda, m_\mu \rangle = \delta_{\lambda,\mu}$. The result follows by Lemma \ref{a5} and induction on the length of $\mu$.
\end{proof}


Theorem \ref{thm5} implies the following interesting fact.

\begin{cor}
 The endomorphism (and involution, see \cite[\S 7.6]{StanEC}) $\omega$ defined on the algebra of symmetric functions by $\omega(e_r) = h_r$ satisfies $\omega(\overline p_\mu) = \widetilde p_\mu$.
\end{cor}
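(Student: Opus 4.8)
The plan is to produce generating-function identities for $\sum_{r}\overline p_r$ and $\sum_{r}\widetilde p_r$ that are visibly images of one another under $\omega$. Since $\omega$ is an algebra homomorphism fixing $q$, and since $\overline p_\mu = \overline p_{\mu_1}\cdots\overline p_{\mu_s}$ and $\widetilde p_\mu = \widetilde p_{\mu_1}\cdots\widetilde p_{\mu_s}$, it suffices to prove $\omega(\overline p_r) = \widetilde p_r$ for a single part $r$.

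Write $\overline P(x) = \sum_{r\ge1}\overline p_r x^r$ and $\widetilde P(x) = \sum_{r\ge1}\widetilde p_r x^r$, and recall $H(x) = \sum_{r\ge0}h_r x^r = \prod_i(1-y_ix)^{-1}$, $E(x) = \sum_{r\ge0}e_r x^r = \prod_i(1+y_ix)$, so $H(qx) = \sum_r q^r h_r x^r$ and $E(-x) = \sum_r(-1)^r e_r x^r$. To compute $\overline P(x)$, write a weakly increasing multiset $i_1\le\cdots\le i_r$ as a concatenation of maximal constant runs; a run of value $j$ and length $c$ contributes the degree-$c$ term of $\frac{y_jx}{1-qy_jx} = \sum_{c\ge1}q^{c-1}(y_jx)^c$, and each junction between consecutive runs contributes a factor $q-1$. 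Summing over the number of runs and over the strictly increasing sequence of run-values,
$$1 + (q-1)\,\overline P(x) = \prod_i\left(1 + (q-1)\frac{y_ix}{1-qy_ix}\right) = \prod_i\frac{1-y_ix}{1-qy_ix} = E(-x)\,H(qx).$$
The identical computation with each run of length $c$ now weighted by $(-1)^{c-1}$ instead of $q^{c-1}$ (so $\frac{y_jx}{1-qy_jx}$ is replaced by $\frac{y_jx}{1+y_jx}$) gives
$$1 + (q-1)\,\widetilde P(x) = \prod_i\left(1 + (q-1)\frac{y_ix}{1+y_ix}\right) = \prod_i\frac{1+qy_ix}{1+y_ix} = H(-x)\,E(qx).$$

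Finally, $\omega(e_r) = h_r$ and $\omega(h_r) = e_r$ give $\omega(E(x)) = H(x)$ and $\omega(H(x)) = E(x)$ as formal identities in $x$, so applying $\omega$ coefficientwise to the first displayed identity (it fixes $x$ and $q$) yields
$$1 + (q-1)\sum_{r\ge1}\omega(\overline p_r)\,x^r = \omega\!\left(E(-x)H(qx)\right) = \omega(E(-x))\,\omega(H(qx)) = H(-x)\,E(qx) = 1 + (q-1)\,\widetilde P(x).$$
Cancelling the factor $q-1$ and comparing coefficients of $x^r$ gives $\omega(\overline p_r) = \widetilde p_r$, whence $\omega(\overline p_\mu) = \prod_i\omega(\overline p_{\mu_i}) = \prod_i\widetilde p_{\mu_i} = \widetilde p_\mu$. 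The only step needing real care is the block decomposition producing the product formula for $\overline P(x)$; everything after it is formal. (One could alternatively route the argument through Theorem \ref{thm5}: its proof gives $\langle\overline p_\mu, s_\lambda\rangle = \chi_\lambda(T_w)$, and transposing broken border strip tableaux — which interchanges the height and width of each strip while preserving the number of strips — together with the analogue of Lemma \ref{a5} for $\widetilde p$ identifies $\langle\widetilde p_\mu, s_\lambda\rangle$ with $\chi_{\lambda'}(T_w) = \langle\overline p_\mu, s_{\lambda'}\rangle = \langle\omega(\overline p_\mu), s_\lambda\rangle$, using $\omega(s_\lambda) = s_{\lambda'}$ and the self-adjointness of $\omega$; as this holds for all $\lambda$, $\omega(\overline p_\mu) = \widetilde p_\mu$. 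This route, however, requires establishing the dual Murnaghan--Nakayama rule, which is essentially equivalent to the corollary, so the generating-function proof above is the more economical one.)
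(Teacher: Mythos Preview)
Your proof is correct and takes a genuinely different route from the paper's. The paper derives the corollary as a byproduct of the representation-theoretic machinery already built up: it shows $\langle \omega(\widetilde p_r), s_\lambda\rangle = \langle \overline p_r, s_\lambda\rangle$ for every $\lambda$ by identifying $[m_\nu]\widetilde p_r$ with $\epsilon_\nu(T_{\gamma_r})$ via Theorem~\ref{thm1}, then invoking the expansion \eqref{intro5} of $\chi_\lambda$ in the induced sign characters $\epsilon_\nu$, and finally the identity $\chi_\lambda(T_{\gamma_r}) = \langle \overline p_r, s_\lambda\rangle$ obtained in the proof of Theorem~\ref{thm5}. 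Your generating-function argument, by contrast, is entirely elementary and self-contained: the closed forms $1+(q-1)\overline P(x) = E(-x)H(qx)$ and $1+(q-1)\widetilde P(x) = H(-x)E(qx)$ make the action of $\omega$ transparent and avoid any appeal to Hecke-algebra characters, inverse Kostka numbers, or the quantum Murnaghan--Nakayama rule. The paper's route has the merit of tying the corollary back to the main storyline (it explains \emph{why} $\widetilde p$ appears at all, namely as the symmetric-function shadow of the induced sign characters), while yours gives a cleaner stand-alone proof and, as a bonus, the product formulas themselves. Your parenthetical alternative is close in spirit to the paper's argument, though the paper uses \eqref{intro5} for $\epsilon_\nu$ rather than transposing tableaux directly.
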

\begin{proof}
 We use the facts that $\omega(s_\lambda) = s_{\lambda'}$ and that $\omega$ is an isometry with respect to $\langle \cdot,\cdot \rangle$. It is enough to prove that $\omega(\overline p_r) = \widetilde p_r$ for all $r$. We have
 $$\langle \omega(\widetilde p_r),s_\lambda \rangle = \langle \widetilde p_r,s_{\lambda'} \rangle = \langle \widetilde p_r, \sum_{\nu} K_{\nu,\lambda'}^{-1} h_\nu \rangle= \sum_\nu K_{\nu,\lambda'}^{-1} [m_\nu] \widetilde p_r = $$
 $$= \sum_\nu K_{\nu,\lambda'}^{-1} \epsilon_\nu(T_{\gamma_r}) = \chi_\lambda(T_{\gamma_r}) = \langle \overline p_r, s_\lambda \rangle$$
 and the claim follows.
\end{proof}

\section{Characters in type B and D} \label{bd}

Every signed composition $\lambda = (\lambda_1,\ldots,\lambda_p) \vdash n$ has a corresponding subgroup of $\f S_{-n}$ which is naturally isomorphic to $\f S_{\lambda_1} \times \f S_{\lambda_2} \times \cdots \times \f S_{\lambda_p}$; call this subgroup the \emph{quasi-parabolic subgroup corresponding to $\lambda$}, and denote it by $\f S_\lambda^B$. A quasi-parabolic subgroup is parabolic if and only if $\lambda_k > 0$ for $k \geq 2$. 

\medskip

For a signed composition $\lambda$, we have
$$\# \f S_\lambda^B = 2^{\sum_{\lambda_i<0} |\lambda_i|} \prod |\lambda_i|!,$$
and the index of $\f S_\lambda^B$ is
$$2^{\sum_{\lambda_i>0} \lambda_i} \binom{n}{|\lambda_1|,\ldots,|\lambda_p|}.$$

\medskip

The length of a signed permutation $w=w_1 \cdots w_n$ is equal to
\begin{equation} \label{bd3}
 \#\set{i \colon w_i < 0} + \#\set{i < j \colon |w_i|>|w_j|} + 2 \cdot \#\set{i < j \colon w_j<0,|w_i|<|w_j|}.
\end{equation}
Signed permutations $w$ and $v$ are in the same left coset of $\f S_\lambda^B$ if and only if for each $k = 1,\ldots,p$, we have either
\begin{itemize}
 \item $\lambda_k > 0$ and the sets 
 $$\set{w_{|\lambda_1| + \ldots + |\lambda_{k-1}| + 1},w_{|\lambda_1| + \ldots + |\lambda_{k-1}| + 2},\ldots,w_{|\lambda_1| + \ldots + |\lambda_{k}|}}$$
 and 
 $$\set{v_{|\lambda_1| + \ldots + |\lambda_{k-1}| + 1},v_{|\lambda_1| + \ldots + |\lambda_{k-1}| + 2},\ldots,v_{|\lambda_1| + \ldots + |\lambda_{k}|}}$$
 are equal, or
 \item $\lambda_k < 0$ and the sets 
 $$\set{|w_{|\lambda_1| + \ldots + |\lambda_{k-1}| + 1}|,|w_{|\lambda_1| + \ldots + |\lambda_{k-1}| + 2}|,\ldots,|w_{|\lambda_1| + \ldots + |\lambda_{k}|}|}$$
 and 
 $$\set{|v_{|\lambda_1| + \ldots + |\lambda_{k-1}| + 1}|,|v_{|\lambda_1| + \ldots + |\lambda_{k-1}| + 2}|,\ldots,|v_{|\lambda_1| + \ldots + |\lambda_{k}|}|}$$
 are equal. 
\end{itemize}

\medskip
 
Let us denote by $X_\lambda^B$ (or $X_J^B$, if $\lambda$ comes from a subset $J \subseteq S_n^B$) the set of coset representatives of $\f S_\lambda^B$ of minimal length. A signed permutation $x$ is in $X_\lambda^B$ if and only if for each $k = 1,\ldots,p$, we have either
\begin{itemize}
 \item $\lambda_k > 0$ and 
 $$x_{|\lambda_1| + \ldots + |\lambda_{k-1}| + 1} < x_{|\lambda_1| + \ldots + |\lambda_{k-1}| + 2} < \ldots < x_{|\lambda_1| + \ldots + |\lambda_{k}|},$$
 or
 \item $\lambda_k < 0$ and
 $$0 < x_{|\lambda_1| + \ldots + |\lambda_{k-1}| + 1} < x_{|\lambda_1| + \ldots + |\lambda_{k-1}| + 2} < \ldots < x_{|\lambda_1| + \ldots + |\lambda_{k}|}.$$
\end{itemize}

For a subset $J \subseteq S_n^B$ and the corresponding signed composition $\lambda = (\lambda_1,\ldots,\lambda_p) \vdash n$, there is a natural bijection $\Phi_J^B$ between left cosets of $\f S_J^B$ and integer sequences $a = a_1 a_2 \cdots a_n$ satisfying
\begin{enumerate}
 \item $1 \leq |a_i| \leq p$ for $i = 1,\ldots, n$,
 \item $\#\set{i \colon |a_i| = k} = |\lambda_k|$ for $k = 1,\ldots,p$,
 \item if $t \in J$, then $a_i \neq -1$ for all $i = 1,\ldots,n$.
\end{enumerate}
Fix a coset $v \f S_J^B$. If $t \in J$ and
$$|v^{-1}(i)| \leq |\lambda_1|$$
take $a_i = 1$. Otherwise, take $a_i = k$ if
$$|\lambda_1|+\ldots+|\lambda_{k-1}| + 1 \leq v^{-1}(i) \leq |\lambda_1|+\ldots+|\lambda_{k}|,$$
and $a_i = - k$ if
$$|\lambda_1|+\ldots+|\lambda_{k-1}| + 1 \leq - v^{-1}(i) \leq |\lambda_1|+\ldots+|\lambda_{k}|.$$
\medskip

Again, the map is well-defined, and it is also easy to see that it is a bijection. By slight abuse of notation, we also use $\Phi_J$ as the corresponding bijection between $X_J$ and integer sequences with properties (1), (2) and (3).

\begin{exm}
 Take $n=8$, $J=\set{s_1,s_2,s_4,s_6,s_7}$ (hence $\lambda=323$) and $v = 4\underline{1}573\underline{2}\underline{8}6$. Since
 $$1 \leq  v^{-1}(4),-v^{-1}(1),v^{-1}(5) \leq 3 = |\lambda_1|,$$
 $$|\lambda_1|+1 = 4\leq v^{-1}(7),v^{-1}(3) \leq 5 = |\lambda_1|+|\lambda_2|, $$
 $$|\lambda_1|+|\lambda_2|+1 = 6 \leq -v^{-1}(2),-v^{-1}(8),v^{-1}(6) \leq 8,$$
 the corresponding coset $v \f S_J^B$ maps to $\underline{1}\underline{3}21132\underline{3}$ with $\Phi_J^B$, and the sequence $3121\underline{3}32\underline{1}$ is the image of the coset that contains $\underline{8}2437\underline{5}61$.\\
 For $J=\set{t,s_1,s_2,s_4,s_6,s_7}$ (hence $\lambda=\underline{3}23$) and $v = 4\underline{1}573\underline{2}\underline{8}6$, on the other hand, the coset $v \f S_J^B$ maps to $1\underline{3}21132\underline{3}$, and the sequence $3121\underline{3}321$ is the image of the coset that contains $82437\underline{5}61$.
\end{exm}

We can describe the bijection as follows. Pick a coset representative $v$, write it in one-line notation, and draw bars in positions given by $\lambda$. In the examples above, that would be $4\underline{1}5|73|\underline{2}\underline{8}6$. The $i$-th element of the $\Phi_J(v)$ tells us in which of the ``slots'' $\pm i$ is located, and with what sign. For example, $-2$ is located in the third slot, so $a_2=-3$. Furthermore, if $t \in J$ (i.e. if $\lambda_1<0$), then change each occurrence of $-1$ to $1$. The inverse is similarly intuitive. Write the locations (positive and negative) of $1$'s and $-1$'s in $a=a_1\cdots a_n$ in increasing order, then the locations of $2$'s and $-2$'s in increasing order, etc. The resulting permutation is a minimal length representative of the coset $\Phi_J^{-1}(a)$. For the example $3121\underline{3}32\underline{1}$ above, this minimal coset representative is $\underline{8}2437\underline{5}16$.

\medskip

Again, we have the following important lemma.

\begin{lemma} \label{bd1}
 Let $\sim_J$ denote the relation of being in the same coset of the subgroup $\f S_J^B$. For $s \in S_n^B$, and every $x,v \in \f S_{-n}$, we have
 $$vsx \sim_J x \not\sim_J s x \Longrightarrow s \in \rw(v).$$
\end{lemma}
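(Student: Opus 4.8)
The plan is to argue by contraposition, following the strategy of Lemma~\ref{a1}: assuming $s \notin \rw(v)$ and $x \not\sim_J sx$, I will produce, for a suitable block $j$ of positions (meaning $\set{|\lambda_1|+\ldots+|\lambda_{j-1}|+1,\ldots,|\lambda_1|+\ldots+|\lambda_j|}$), an integer-valued statistic on $\f S_{-n}$ that (i) is unchanged when we multiply on the left by an element whose reduced words avoid $s$, (ii) takes different values at $x$ and $sx$, and (iii) would be forced to agree at $x$ and $vsx$ if $vsx \sim_J x$ held. The resulting contradiction gives $vsx \not\sim_J x$. The argument splits into the cases $s = s_i$ ($1 \le i \le n-1$) and $s = t$, which behave differently with respect to signs.

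Consider $s = s_i$. Then $s_i \notin \rw(v)$ means $v$ lies in the parabolic subgroup generated by $S_n^B \setminus \set{s_i}$, which is the product of the group of signed permutations of $\set{1,\ldots,i}$ and the symmetric group on $\set{i+1,\ldots,n}$; thus $v$ may change signs among the values $\set{\pm 1,\ldots,\pm i}$, but permutes $\set{i+1,\ldots,n}$ without changing signs. Write $p_i = x^{-1}(i)$ and $p_{i+1} = x^{-1}(i+1)$; the one-line notations of $x$ and $s_i x$ differ only at positions $|p_i|$ and $|p_{i+1}|$. If $|p_i|$ and $|p_{i+1}|$ lie in different blocks, say $|p_i|$ in block $k'$, I use $D_j(y) = \#\set{h \colon 1 \le h \le i,\ |y^{-1}(h)| \text{ lies in block } j}$: since $h \mapsto |v^{-1}(h)|$ is a permutation of $\set{1,\ldots,i}$ one gets $D_j(vsx) = D_j(sx)$, while $D_{k'}(sx) = D_{k'}(x) - 1$ because passing from $x$ to $s_i x$ moves the value $i$ out of block $k'$ (the value $i+1$, being $>i$, does not count); as $D_{k'}$ depends only on the unsigned content of block $k'$, it is constant on the coset of $x$, so $vsx \sim_J x$ is impossible. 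If instead $|p_i|$ and $|p_{i+1}|$ lie in the same block $k$, then $x \not\sim_J s_i x$ forces both $\lambda_k > 0$ (otherwise only absolute values matter in block $k$, and those are unchanged) and $\sign(p_i) \ne \sign(p_{i+1})$ (otherwise the signed content of block $k$ is unchanged); here I use $F_j(y) = \#\set{h \colon i < h \le n,\ y^{-1}(h) \text{ lies in block } j}$, which is preserved on the left by $v$ since $v$ permutes $\set{i+1,\ldots,n}$ without changing signs, and which changes by $\pm 1$ at block $k$ when $x$ is replaced by $s_i x$ (the value $i+1$ moves between the two positions $p_i$, $p_{i+1}$, exactly one of which is positive); since $\lambda_k>0$, $F_k$ is constant on the coset of $x$, and again $vsx \sim_J x$ is impossible.

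For $s = t$, the condition $t \notin \rw(v)$ means $v$ lies in $\langle s_1,\ldots,s_{n-1}\rangle \cong \f S_n$, so $v$ is an ordinary permutation and in particular preserves the sign of every value. Now $tx$ differs from $x$ only by negating the value $\pm 1$; if $q = |x^{-1}(1)|$ lies in block $k$, then $x \not\sim_J tx$ forces $\lambda_k > 0$, again because absolute values are unchanged. Here I use $N_j(y) = \#\set{p \colon p \text{ in block } j,\ y_p < 0}$, the number of negative entries of the one-line notation lying in block $j$: since $v$ preserves signs, $N_j(vtx) = N_j(tx)$, while $N_k(tx) = N_k(x) \pm 1$; as $\lambda_k > 0$, $N_k$ is constant on the coset of $x$, so $vtx \sim_J x$ is impossible.

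The main obstacle is the bookkeeping in the case $s = s_i$: unlike in type A, two values in the \emph{same} block of positions can still place $x$ and $s_i x$ in different cosets (when they sit there with opposite signs), so the ``unsigned'' count $D_j$ — the direct analogue of the statistic used for Lemma~\ref{a1} — does not by itself suffice, and must be supplemented by the signed count $F_j$ restricted to the ``large'' values $i+1,\ldots,n$, precisely the values on which $v$ is not allowed to flip signs. Checking that each of $D_j$, $F_j$, $N_j$ is genuinely invariant under left multiplication by the relevant $v$ reduces to examining how $v^{-1}$ acts on the value set, and is routine once one keeps careful track of which values and positions are permitted to change sign.
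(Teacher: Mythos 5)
Your proof is correct, and it follows the same overall strategy as the paper's (exhibit a coset-invariant counting statistic that is preserved under left multiplication by any $v$ with $s\notin\rw(v)$ but changes when passing from $x$ to $sx$). The difference worth recording is that your case analysis for $s=s_i$ is genuinely more complete than the paper's. The paper asserts that $x\not\sim_J s_ix$ forces $|x^{-1}(i)|$ and $|x^{-1}(i+1)|$ into \emph{different} blocks and then says the rest is ``almost exactly the same as in type A''; but this assertion is false as stated: for instance, with $n=2$, $\lambda=(2)$ and $x=1\,\un 2$, both $|x^{-1}(1)|$ and $|x^{-1}(2)|$ lie in the single (positive) block, yet $s_1x=2\,\un 1\not\sim_J x$ because the two values carry opposite signs. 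This configuration does occur in the application (it is exactly the subcase $-k=a_i<0<a_{i+1}$, resp.\ $a_i>0>a_{i+1}$, in the proof of Theorem \ref{thm2}), so it cannot be discarded. Your introduction of the signed statistic $F_j$, counting positively placed values $h>i$ in block $j$ --- precisely the values on which $v$ cannot flip signs --- is what handles this case, and your unsigned statistic $D_j$ recovers the paper's type-A-style argument for the different-block case. Likewise your $N_j$ for $s=t$ (number of negative entries per block) makes precise the paper's rather terse ``$vtx$ contains $-1$'' step, which as literally written is not invariant under left multiplication by an arbitrary unsigned $v$. I verified the three required properties (invariance under $v$, change by $\pm 1$ from $x$ to $sx$, constancy on cosets) for each of $D_j$, $F_j$, $N_j$; they all hold. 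In short: same method, but your write-up closes a real gap in the published argument.
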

\begin{proof}
 Suppose that $s = s_i \notin \rw(v)$ and that $x \not\sim_J s_i x$. The latter means that 
 $$|\lambda_1|+\ldots+|\lambda_{k-1}| + 1 \leq |x^{-1}(i)| \leq |\lambda_1|+\ldots+|\lambda_{k}|$$
 and
 $$|\lambda_1|+\ldots+|\lambda_{j-1}| + 1 \leq |x^{-1}(i+1)| \leq |\lambda_1|+\ldots+|\lambda_{j}|,$$
 where $k \neq j$. From here, the proof is almost exactly the same as in type A. On the other hand, take $s = t \notin \rw(v)$ with $x \not\sim_J t x$. If $x$ (in the one-line notation) contains $1$ (respectively $-1$), then $tx$ contains $-1$ (respectively $1$), and then also $vtx$ contains $-1$ (respectively, $1$). But then $vtx \not\sim_J x$.
\end{proof}

As in type A, the lemma implies that for $s \notin \rw(w'),\rw(w'')$ and $\rw(w') \cap \rw(w'') = \emptyset$,

\begin{equation} \label{bd2}
 \wP_{x,J} \left(\wT_{w'sw''} \cdot \wT_x\right) = \wP_{x,J}\left(\wT_{w'}\cdot \wP_{x,J}\left(\wT_s \cdot \wP_{x,J}\left(\wT_{w''} \cdot \wT_x\right)\right)\right).
\end{equation}

where
$$\wP_{x,J}(\wT_v) = \left\{ \begin{array}{ccl} \wT_v & : & v \sim_J x \\ 0 & : & \mbox{otherwise} \end{array} \right..$$

We can now prove Theorem \ref{thm2}.

\medskip

Assume that we are given a parabolic element $w$ of type $K$. For a transversal element $x \in X_J$, write $a = \Phi_J(x)$. We have one of the following cases.

\medskip

Suppose first that there is an $i$ such that $s_i$ appears in a reduced word for $w$, $w = w' s_i w''$, and such that $a_i < a_{i+1}$. There are several options. First, we can have $0 < k = a_i < a_{i+1}$. By definition, that means that
$$x^{-1}(i) \leq |\lambda_1| + \ldots + |\lambda_k| < x^{-1}(i+1).$$ 
Furthermore, every $x' \sim_J x$ also satisfies 
$$(x')^{-1}(i) \leq |\lambda_1| + \ldots + |\lambda_k| < (x')^{-1}(i+1),$$ 
i.e. $x'$ has $i$ strictly to the left of $i+1$. In particular, by \eqref{bd3}, $\ell(sx') > \ell(x')$ (which means $\wT_s \wT_{x'} = \wT_{sx'}$) and $sx' \not\sim_J x' \sim_J x$ (which means $\wP_{x,J}\left(\wT_{sx'}\right) = 0$). In other words, $\wP_{x,J}\left(\wT_s \cdot \wP_{x,J}\left(\wT_{w''} \cdot wT_x\right)\right) = 0$, and equation \eqref{bd2} implies that
$$\wP_{x,J} \left(\wT_{w} \wT_x\right) = 0.$$ 
We can also have $-k = a_i < 0 < a_{i+1}$. Then
$$-x^{-1}(i) \leq |\lambda_1| + \ldots + |\lambda_k| < x^{-1}(i+1),$$ 
and every $x' \sim_J x$ has $-i$ strictly to the left of $i+1$. Then $\ell(sx') > \ell(x')$ and, as before, $\wP_{x,J} \left(\wT_{w} \wT_x\right) = 0$. Lastly, we can have $-k = a_i < a_{i+1} < 0$. Then
every $x' \sim_J x$ has $-(i+1)$ strictly to the left of $-i$, and multiplying on the left by $s_i$ increases the length. We again conclude that
$$\wP_{x,J} \left(\wT_{w} \wT_x\right) = 0.$$ 

\medskip

Now suppose that $t$ appears in a reduced word for $w$, $w = w' t w''$, and that $a_1 \geq 2$ or that $a_1 =1$ and $t \notin J$. That means that in $x' \sim_J x$, $1$ appears. Then $\ell(t x') > \ell(x')$, and again we have
$$\wP_{x,J} \left(\wT_{w} \wT_x\right) = 0.$$ 

\medskip

On the other hand, suppose that none of the above holds, in other words, that $a$ satisfies all the conditions of Theorem \ref{thm2}. We prove by induction on $\ell(w) = \# K$ that in this case,
$$\wP_{x,J} \left(\wT_{w} \wT_x\right) = R^{d_K(a)+f_K(a)} \wT_{w_{a,J} x},$$
where $d_K(a)$ denotes the number of elements in the set $\set{i \colon s_i \in K, a_i > a_{i+1}}$, $f_K(a)$ is $1$ if $t \in K$ and $a_1<0$, and $0$ otherwise, $w_{a,J}$ is the subword of $w$ consisting of $s_i \in J$ with $a_i = a_{i+1}$ and $t$ if $a_1 > 0$, and $\ell(w_{a,J} x) = \ell(w_{x,J}) + \ell(x)$.

\medskip

Again, the statement is obvious for $K = \emptyset$, i.e.\hspace{-0.07cm} for $w = e$. Suppose that it holds for $w'$, and suppose that $w = sw'$ with $\ell(w) = \ell(w') + 1$. We have to evaluate
$$\wP_{x,J} \left(\wT_{w} \wT_x\right) = \wP_{x,J} \left(\wT_{s} \wT_{w'} \wT_x\right) = \wP_{x,J} \left( \wT_s \wP_{x,J}\left(\wT_{w'} \wT_x \right)\right) = R^{d_{K'}(a)+f_{K'}(a)} \wP_{x,J}\left( \wT_s \wT_{w_{a,J}' x}\right),$$
where $K' = K \setminus \set s$, $w_{a,J}'$ is the subword of $w'$ consisting of $s_i \in J$ with $a_i = a_{i+1}$ and $t$ if $a_1 > 0$. If $s = s_i$ and $a_i > a_{i+1}$, then in each of the three possible cases ($a_i>a_{i+1}>0$, $a_i>0>a_{i+1}$ and $0>a_i>a_{i+1}$) we have $\ell(s_i v) < \ell(v)$ for every $v \sim_J x$. That implies $\ell(s_i w_{x,J}' x) < \ell(w_{x,J}'x)$ and $\wP_{x,J}\left( \wT_s \wT_{w_{x,J}' x}\right) = R \wT_{w_{x,J}' x}$. We have $d_K(a) = d_{K'}(a)+1$, $f_K(a) = f_{K'}(a)$, $w_{x,J} = w_{x,J}'$ and
$$\wP_{x,J} \left(\wT_{w} \wT_x\right) = R^{d_{K'}(a)+1+f_{K'}(a)} \wT_{w_{x,J}' x} = R^{d_K(a)+f_K(a)} \wT_{w_{x,J} x}.$$
On the other hand, assume $s = s_i$ and $a_i = a_{i+1} = k > 0$. That means 
$\lambda_1+\ldots+\lambda_{k-1} + 1 \leq x^{-1}(i), x^{-1}(i+1) \leq \lambda_1+\ldots+\lambda_{k}$. Furthermore, because $x \in X_J$, we have $x^{-1}(i) + 1=  x^{-1}(i+1)$. As in type A, $\ell(s_i w_{x,J}' x) > \ell(w_{x,J}' x)$, $\wT_{s_i} \wT_{w_{x,J}' x} = \wT_{s_i w_{x,J}' x}$, $d_K(a)=d_{K'}(a)$, $f_K(a) = f_{K'}(a)$, $w_{x,J} = s_iw_{x,J}'$ and
$$\wP_{x,J} \left(\wT_{w} \wT_x\right) = R^{d_{K'}(a)+f_{K'}(a)} \wT_{s_iw_{x,J}' x} = R^{d_K(a)+f_K(a)} \wT_{w_{x,J} x}.$$
Also, $\ell(s_i w_{x,J}' x) = \ell(w_{x,J} x) = \ell(w_{x,J}' x) + 1 = \ell(w_{x,J}) + \ell(x)$. A similar reasoning applies if $s = s_i$ and $a_i = a_{i+1} = -k < 0$.

\medskip

Assume that $s = t$ and $a_1 < 0$. Then $w_{x,J}' x$ is $\ell(w_{x,J}'x)$, with one $-1$ changed to $1$. Therefore $\ell(t w_{x,J}' x) < \ell(w_{x,J}'x)$ and $\wP_{x,J}\left( \wT_t \wT_{w_{x,J}' x}\right) = R \wT_{w_{x,J}' x}$. Since $d_K(a)=d_{K'}(a)$, $f_K(a)=1$, $f_{K'}(a)=0$ and $w_{x,J} = w_{x,J}'$, this means that
$$\wP_{x,J} \left(\wT_{w} \wT_x\right) = R^{d_{K'}(a)+f_{K'}(a)+1} \wT_{w_{x,J}' x} = R^{d_K(a)+f_K(a)} \wT_{w_{x,J} x}.$$ 
Finally, assume that $s = t$ and $a_1=1>0$ (and then also $t \in J$). Then $\ell(s_i w_{x,J}' x) > \ell(w_{x,J}' x)$, $\wT_{s_i} \wT_{w_{x,J}' x} = \wT_{s_i w_{x,J}' x}$, $d_K(a)=d_{K'}(a)$, $f_K(a)=f_{K'}(a)$ and $w_{x,J} = t w_{x,J}'$, so
$$\wP_{x,J} \left(\wT_{w} \wT_x\right) = R^{d_{K'}(a)+f_{K'}(a)} \wT_{tw_{x,J}' x} = R^{d_K(a)+f_K(a)} \wT_{w_{x,J} x}.$$

A calculation analogous to the one in type A completes the proof in type B.

\medskip

The proof in type D is similar and we will leave it as an exercise for the reader. As is evident from the statement of Theorem \ref{thm3}, there are even more separate cases to check, but again, the basis of it all is a lemma analogue to Lemmas \ref{a1} and \ref{bd1}. Let us just mention that the length of a signed permutation $w=w_1 \cdots w_n$ in $\f S_{-n}^0$ is equal to
\begin{equation}
 \#\set{i < j \colon |w_i|>|w_j|} + 2 \cdot \#\set{i < j \colon w_j<0,|w_i|<|w_j|}.
\end{equation}

\section{Characters in type I} \label{i}

Theorem \ref{thm4} gives expressions for characters induced from parabolic subgroups, evaluated at any elements of the Hecke algebra, unlike in case A, where we get combinatorial descriptions of only minimal length representatives of conjugacy classes, and unlike in cases B and D, where we found combinatorial descriptions of minimal length representatives of only certain conjugacy classes. However, the proofs in this case are extremely technical. Every result is a careful study of several (similar) cases. For the sake of brevity, we omit proofs of certain cases in the proofs of the necessary lemmas.

\medskip

Let us start with the following useful formula, which can be easily proved by induction on $a$:
\begin{equation} \label{i1}
 \sum_{i=0}^a \binom{c+i}b = \binom{c+a+1}{b+1} - \binom c{b+1}.
\end{equation}

The next result provides a basis for all our computations in this Hecke algebra.

\begin{lemma} \label{i2}
 For $m$ even and $0 \leq k \leq m/2$, we have
 $$\wT_{(12)^k} \wT_{(12)^{m/2}} = \wT_{(21)^{m/2-k}} + \sum_{\ell(w)>m-2k} \sum_{j} {\textstyle \binom{\ell(w)-m+2k-j-1}{j}} R^{\ell(w)-m+2k-2j} \wT_w.$$
 For $m$ odd and $0 \leq k \leq (m-1)/2$, we have
 $$\wT_{(12)^k} \wT_{1(21)^{(m-1)/2}} = \wT_{2(12)^{(m-1)/2-k}} + \sum_{\ell(w)>m-2k}\sum_{j}  {\textstyle \binom{\ell(w)-m+2k-j-1}{j}} R^{\ell(w)-m+2k-2j} \wT_w.$$
\end{lemma}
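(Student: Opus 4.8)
The plan is to prove both identities simultaneously by induction on $k$, working inside the two-generator Hecke algebra $H_m^I$ and using only the quadratic relation $\wT_s^2 = R\wT_s + 1$ together with the list of elements of $I_2(m)$ (one element of each length $0,1,\ldots,m-1$ and two of length $m$ — more precisely, for the reduced words $(12)^k$, $(21)^k$, $1(21)^k$, $2(12)^k$ one knows exactly when multiplying on the left by $T_1$ or $T_2$ raises or lowers the length). For $k=0$ both statements are trivial: the sum on the right is empty and the identity reads $\wT_{(12)^{m/2}}=\wT_{(21)^{m/2}}$ (resp.\ $\wT_{1(21)^{(m-1)/2}}=\wT_{2(12)^{(m-1)/2}}$), which is just the braid relation $\underbrace{\wT_1\wT_2\wT_1\cdots}_{m}=\underbrace{\wT_2\wT_1\wT_2\cdots}_{m}$.

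For the inductive step, I would write $\wT_{(12)^{k+1}}=\wT_1\wT_2\cdot\wT_{(12)^k}$ (in the even case; $\wT_{(12)^{k+1}}\wT_{1(21)^{(m-1)/2}}$ is handled the same way in the odd case), apply the induction hypothesis to $\wT_{(12)^k}\wT_{(12)^{m/2}}$, and then left-multiply the resulting linear combination by $\wT_1\wT_2$. Each basis element $\wT_w$ appearing has a known length, and left-multiplication by $\wT_1$ then by $\wT_2$ either lengthens $w$ (giving $\wT_{12w}$) or, when it shortens, produces $\wT_{sw}+R\wT_w$ via the quadratic relation. The ``leading term'' $\wT_{(21)^{m/2-k}}$ gets sent to $\wT_{(21)^{m/2-k-1}}$ (with an $R$-correction that must be tracked), matching the new leading term, and every other $\wT_w$ with $\ell(w)>m-2k$ contributes to the $w$-summands with $\ell(w)>m-2k-2$; I would collect the coefficient of a fixed $\wT_w$ on the right-hand side of the claimed identity for $k+1$. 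Setting $\ell=\ell(w)$ and $c=\ell-m+2k$ (so that $c$ increases by $2$ as $k\to k+1$), the coefficient is a sum of two binomial coefficients coming from the two ways $\wT_w$ can be reached (directly from a length-$(\ell-2)$ term that was lengthened, or from a length-$\ell$ term whose first multiplication lowered and then a correction term appeared), and the needed identity is the Pascal-type recurrence
$$\binom{c-j-1}{j}=\binom{c-2-j-1}{j}+\binom{c-1-(j-1)-1}{j-1},$$
i.e.\ $\binom{c-j-1}{j}=\binom{c-j-3}{j}+\binom{c-j-1}{j-1}$, which is immediate from $\binom{a}{j}=\binom{a-1}{j}+\binom{a-1}{j-1}$ applied twice (or from the hockey-stick formula \eqref{i1}).

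The main obstacle is bookkeeping rather than conceptual: one must pin down precisely, for each of the four families of reduced words and for left-multiplication by $\wT_1$ and by $\wT_2$ separately, whether the length goes up or down, because this depends on the parity of the exponent and on whether the word starts with $1$ or $2$; and one must correctly account for the $R$ picked up when the leading term $\wT_{(21)^{m/2-k}}$ is pushed down, making sure it reassembles into the ``$R^{\ell-m+2k-2j}$'' power after reindexing. I would organize this by treating the step $\wT_1\cdot(\,\cdot\,)$ and then $\wT_2\cdot(\,\cdot\,)$ as two half-steps, each shifting the exponent $c$ by $1$; each half-step is governed by the single Pascal recurrence above, so the full step is just two applications of it. A symmetric remark: the odd case is literally the even-case argument with $(12)^{m/2}$ replaced by $1(21)^{(m-1)/2}$ and the roles of the two leading terms adjusted, so I would present the even case in full and indicate that the odd case differs only in the starting letter of the relevant reduced words.
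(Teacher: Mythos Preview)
Your overall strategy---induction on $k$, with the inductive step realized by left-multiplying the expansion by $\wT_{12}$---is exactly the paper's. The gap is in the bookkeeping at the key step.

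You assert that a fixed $\wT_w$ is reached in two ways (from a length-$(\ell-2)$ term that was lengthened, or from a length-$\ell$ term via a correction), and you write the resulting recurrence as $\binom{c-j-1}{j}=\binom{c-j-3}{j}+\binom{c-j-1}{j-1}$. This identity is false: for $c=5$, $j=2$ the two sides are $1$ and $2$. The miscount behind it is that, for $w=(12)^{\ell/2}$, there is \emph{no} $v$ of length $\ell$ with $[\wT_w]\,\wT_{12}\wT_v\neq 0$; rather, exactly three $v$ contribute, of lengths $\ell-2$, $\ell-1$, and $\ell+1$: namely $(12)^{\ell/2-1}$ with coefficient $1$, $2(12)^{\ell/2-1}$ with coefficient $R$, and $2(12)^{\ell/2}$ with coefficient $R$. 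With $c=\ell-m+2k$ this gives the three-term recurrence
\[
\binom{c+1-j}{j} \;=\; \binom{c-1-j}{j-2} + \binom{c-1-j}{j-1} + \binom{c-j}{j},
\]
which indeed follows from two applications of Pascal. Your proposed half-step organization does not sidestep this: after multiplying only by $\wT_2$, the coefficient of $\wT_v$ depends on whether $v$ begins with $1$ or with $2$, not merely on $\ell(v)$, so the intermediate expression is not of the shape in the lemma with $c$ shifted by one, and a single Pascal step does not suffice. (Two smaller slips: $I_2(m)$ has \emph{two} elements of each length $1,\ldots,m-1$ and one each of lengths $0$ and $m$, the reverse of what you wrote; and $\wT_1\wT_2\cdot X$ means applying $\wT_2$ first and $\wT_1$ second.) Once the three-term count is in place, the rest of your plan---handling the leading term and the boundary lengths separately, then arguing symmetrically for the odd-$m$ statement---matches the paper's sketch.
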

\begin{skt}
 The first statement is obvious for $k=0$, assume it is true for $k < m/2$. We have to find the coefficient of $\wT_w$ in $\wT_{(12)^{k+1}} \wT_{(12)^{m/2}} = \wT_{12} \wT_{(12)^{k}} \wT_{(12)^{m/2}}$. Let us look first at the case when $\ell(w) > m-2k+2$. Multiplying $\wT_v$ by $\wT_{12}$ gives terms with length between $\ell(v)-2$ and $\ell(v)+2$. In particular, to find the coefficient at $\wT_w$ in 
 $$\wT_{12} \wT_{(12)^k} \wT_{(12)^{m/2}},$$
 it is enough to find the coefficient of $\wT_w$ in
 $$\wT_{12} \left( \sum_{|\ell(w)-\ell(v)| \leq 2} \sum_j \binom{\ell(v)-m+2k-j-1}{j} R^{\ell(v)-m+2k-2j} \wT_v\right).$$
 Assume that $w = (12)^{\ell/2}$. There are exactly three elements $v \in I_2(m)$ for which $[\wT_w] \wT_{12} \wT_v \neq 0$: $[\wT_w] \wT_{12} \wT_{(12)^{\ell/2-1}} = 1$, $[\wT_w] \wT_{12} \wT_{2(12)^{\ell/2-1}} = R$ and $[\wT_w] \wT_{12} \wT_{2(12)^{\ell/2}} = R$. In other words,
 $$[\wT_w] \wT_{(12)^{k+1}} \wT_{(12)^{m/2}} = \sum_j {\textstyle \binom{\ell-2-m+2k-j-1}j} R^{\ell-2-m+2k-2j} + $$
 $$+R \sum_j {\textstyle \binom{\ell-1-m+2k-j-1}j} R^{\ell-1-m+2k-2j} + R \sum_j {\textstyle \binom{\ell+1-m+2k-j-1}j} R^{\ell+1-m+2k-2j}= $$
 $$= \sum_j \left( {\textstyle \binom{\ell-m+2(k+1)-j-3}{j-2} + \binom{\ell-m+2(k+1)-j-3}{j-1} + \binom{\ell-m+2(k+1)-j-2}j} \right) R^{\ell-m+2(k+1)-2j} = $$
 $$= \sum_j \left( {\textstyle \binom{\ell-m+2(k+1)-j-2}{j-1} + \binom{\ell-m+2(k+1)-j-2}j} \right) R^{\ell-m+2(k+1)-2j}= $$
 $$= \sum_j \binom{\ell-m+2(k+1)-j-1}j  R^{\ell-m+2(k+1)-2j},$$
 which is the induction statement for $k+1$. Of course, we have to verify the same statement for $w=(21)^{\ell/2}$, $w = 1(21)^{(\ell-1)/2}$ and $w = 2(12)^{(\ell-1)/2}$, and we also have to verify the cases with $\ell(w) \leq m - 2k + 2$. For example, if $w = (21)^{m/2-k-1}$, the coefficient is equal to $[\wT_w] \wT_{12} \wT_{(21)^{m/2-k}} = 1$, if $w = 1(21)^{m/2-k-1}$, the coefficient is equal to
 $$[\wT_w] \wT_{12} \wT_{(21)^{m/2-k}} = [\wT_w] \wT_{12} \wT_{21} \wT_{(21)^{m/2-k-1}} = [\wT_w] (1 + R \wT_1 + R \wT_{121}) \wT_{(21)^{m/2-k-1}} = R,$$
 if $w = (12)^{m/2-k}$, then the coefficient is equal to
 $$[\wT_w] \wT_{12} (\wT_{(21)^{m/2-k}} + R \wT_{1(21)^{m/2-k}} + R \wT_{2(12)^{m/2-k}} + R^2 \wT_{(12)^{m/2-k+1}} + R^2 \wT_{(21)^{m/2-k+1}}) = $$
 $$= [\wT_w] R \wT_{12} \wT_{2(12)^{m/2-k}} = R^2,$$
 etc. We leave all other cases as exercises. The second equality is proved analogously.\qed
\end{skt}

\begin{lemma} \label{i3}
 If $w \in I_2(m)$ has even length $\ell \geq 2$ and $\ell(x) > 2(m-\ell)$, then
 $$[\wT_x] \wT_w \wT_x = \sum_{k} \binom{\ell-2m+2\ell(x)-1-k}{k} R^{\ell-2m+2\ell(x)-2k}.$$
 Now choose $w \in I_2(m)$ with odd length $\ell$. If $(\ell+1)/2 \leq \ell(x) < m - (\ell-1)/2$ and reduced expressions for $x$ and $w$ start with the same generator, we have $[\wT_x] \wT_w \wT_x = R$. Otherwise,
 $$[\wT_x] \wT_w \wT_x = \sum_{k} \binom{\ell-2m+2\ell(x)-1-k}{k} R^{\ell-2m+2\ell(x)-2k}.$$
\end{lemma}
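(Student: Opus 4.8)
The plan is to deduce everything from Lemma~\ref{i2}. The point is that Lemma~\ref{i2} is precisely the evaluation of $\wT_w\wT_{w_0}$, where $w_0$ is the longest element of $I_2(m)$ (so $w_0=(12)^{m/2}$ for $m$ even and $w_0=1(21)^{(m-1)/2}=2(12)^{(m-1)/2}$ for $m$ odd) and $w$ runs over the $(12)^k$; the $1\leftrightarrow 2$ symmetry of the Coxeter graph supplies the companion identities with $w=(21)^k$, and for $w$ of odd length one extra application of the quadratic relation $\wT_s^2=R\wT_s+1$ expresses $\wT_w\wT_{w_0}$ through these as well. So after normalizing (via $1\leftrightarrow 2$) to the case that a reduced word for $w$ begins with $1$ --- this is what turns the odd-length dichotomy into a condition on the first letter of $x$ --- I may treat the products $\wT_w\wT_{w_0}$ as known.

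To pass from $x=w_0$ to a general $x$ I would use $\wT_x\wT_{x^{-1}w_0}=\wT_{w_0}$ (valid because $\ell(x)+\ell(x^{-1}w_0)=\ell(w_0)=m$) together with $\wT_s^{-1}=\wT_s-R$, which gives $\wT_w\wT_x=\wT_w\wT_{w_0}\cdot\wT_{x^{-1}w_0}^{-1}$ with $\wT_{x^{-1}w_0}^{-1}$ a product of $m-\ell(x)$ factors $(\wT_s-R)$. Concretely I would prove, by downward induction on $\ell(x)$ with base case $\ell(x)=m$ furnished by Lemma~\ref{i2}, a full product formula of the shape
$$\wT_w\wT_x=(\text{short correction})+\sum_v\sum_j\binom{\ell(v)+\ell(x)+\ell(w)-2m-j-1}{j}\,R^{\,\ell(v)+\ell(x)+\ell(w)-2m-2j}\,\wT_v,$$
the inductive step being $\wT_w\wT_{x'}=\wT_w\wT_x\wT_s-R\,\wT_w\wT_x$ for $x=x's$, $\ell(x')=\ell(x)-1$: expand the right side by the inductive hypothesis, use $\wT_v\wT_s\in\{\wT_{vs},\ \wT_{vs}+R\wT_v\}$ according to the last letter of $v$, and collapse the resulting double sums with Pascal's rule and \eqref{i1}, exactly as in the displayed computation in the proof of Lemma~\ref{i2}. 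The ``short correction'' is a cascade of terms $\wT_{wx},\wT_?,\dots$ whose lengths drop by $2$ and whose coefficients are powers of $R$, generalizing the single term $\wT_{w^{-1}w_0}$ with coefficient $1$ that appears in Lemma~\ref{i2}.

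Lemma~\ref{i3} is then read off by setting $v=x$: the binomial part becomes $\sum_k\binom{\ell(w)-2m+2\ell(x)-1-k}{k}\,R^{\,\ell(w)-2m+2\ell(x)-2k}$, which (with the convention $\binom ab=0$ for $b<0$ or $b>a$) automatically vanishes unless $\ell(w)-2m+2\ell(x)>0$. When $w$ has even length the hypothesis $\ell(x)>2(m-\ell(w))$ keeps the correction cascade away from $\wT_x$, so only the binomial part survives. When $w$ has odd length the element $wx$ is comparatively short, and precisely when $x$ and $w$ begin with the same generator and $(\ell(w)+1)/2\le\ell(x)<m-(\ell(w)-1)/2$ the cascade reaches $\wT_x$ with coefficient $R$ while the binomial part at $v=x$ vanishes, giving the value $R$; outside this range the cascade misses $\wT_x$ and the binomial formula stands, and at the boundary $\ell(x)=m-(\ell(w)-1)/2$ the binomial formula itself evaluates to $R$, so the two descriptions agree.

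The main obstacle is the bookkeeping of the correction cascade. In Lemma~\ref{i2} it is a single term, but for general $x$ its length and its coefficient pattern depend on the parities of $m$ and $\ell(w)$, on the first letters of $w$ and $x$, and on whether the cascade bottoms out at $\wT_e$; sorting out these subcases, together with making the Pascal-type identities line up in the inductive step, is where the real work lies. As in the proof of Lemma~\ref{i2}, I would present one or two representative cases in full and indicate that the remaining ones are entirely analogous.
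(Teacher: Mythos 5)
Your strategy is sound and lands at the same place as the paper, but by a genuinely different route. The paper also anchors everything on Lemma \ref{i2}, but reaches it by direct case analysis on the shapes of $w$ and $x$: when both are powers of $(12)$ it telescopes $\wT_w\wT_x$ into $\wT_{(12)^{a+b-m/2}}\wT_{(12)^{m/2}}$ and reads off the coefficient, and when they point in opposite directions it iterates $\wT_{21}\wT_{12}=1+R\wT_2+R\wT_{212}$ to produce a sum of same-direction products, applies Lemma \ref{i2} to each, and collapses the binomial sums with \eqref{i1}. You instead run one downward induction on $\ell(x)$ from $x=w_0$ via $\wT_{x'}=\wT_x(\wT_s-R)$, carrying a closed form for the entire product. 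The binomial part of your inductive step does check out: with $N_v=\ell(v)+\ell(x)+\ell(w)-2m$, the new coefficient of $\wT_{v'}$ is $c_{v's}$ if $v'$ ends in $s$ and $c_{v's}-Rc_{v'}$ otherwise, and Pascal's rule returns the $N'$-form in both cases; this buys uniformity and largely dispenses with the \eqref{i1} manipulations. The cost is that all the delicate content of the lemma --- the exceptional value $R$, the ``same first generator'' condition, and the role of the hypothesis $\ell(x)>2(m-\ell)$ --- is concentrated in the correction cascade, whose evolution under right multiplication by $\wT_s-R$ (each step creates terms of adjacent lengths that can interfere with the binomial part, so the claim that it consists of isolated powers of $R$ at lengths dropping by $2$ needs justification) you assert rather than derive. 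That deferral is comparable in completeness to the paper's own sketch, which also works two representative cases and leaves the rest as an exercise, but a full write-up along your lines must actually carry out the cascade bookkeeping, since that is precisely where the odd-length dichotomy comes from.
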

\begin{skt}
 The proof consists of studying different cases and applying Lemma \ref{i2}. Suppose first that $w = (12)^a$, $x = (12)^b$, and $m$ is even. If $a + b \leq m/2$, then $\wT_w \wT_x = \wT_{(12)^{a+b}}$ and $[\wT_x] \wT_w \wT_x = 0$. If $a + b > m/2$, then 
 $$\wT_w \wT_x = \wT_{(12)^{a+b-m/2}} \wT_{(12)^{m/2}}.$$
 If $m-a-2b=0$, then, by Lemma \ref{i2}, the only term in $\wT_w \wT_x$ of length $2b$ is $\wT_{(21)^b}$; in other words, $[\wT_x] \wT_w \wT_x = 0$. If $a+2b-m > 0$, then the term of $\wT_x$ in $\wT_w \wT_x$ is, again by Lemma \ref{i2}, equal to
 $$\sum_{j} \binom{2b-m+2(a+b-m/2)-j-1}{j} R^{2b-m+2(a+b-m/2)-2j}= $$
 $$=\sum_{j} \binom{2a+4b-2m-j-1}{j} R^{2a+4b-2m-2j} \! = \!\sum_{j} \binom{\ell-2m+\ell(x)-j-1}{j} R^{\ell-2m+2\ell(x)-2j},$$
 as claimed.\\
 Suppose $w = (21)^a$, $x = (12)^b$, $b < m/2$, and $m$ is even. Note that $\wT_{21} \wT_{12} = \wT_2 (1 + R \wT_1) \wT_2 = 1 + R \wT_2 + R \wT_{212}$. That means that $\wT_{(21)^a} \wT_{(12)^b}$ equals
 $$\wT_{(21)^{a-1}} (1 + R \wT_2 + R \wT_{212})\wT_{(12)^{b-1}} = \wT_{(21)^{a-1}}\wT_{(12)^{b-1}} + R \wT_{2 (12)^{a-2}} \wT_{(12)^b} + R \wT_{2 (12)^{a-1}} \wT_{(12)^b}.$$
 Repeating this, we get that
 $$[\wT_{(12)^b}] \wT_{(21)^a}\wT_{(12)^b} = [\wT_{(12)^b}] R \left( \wT_2 \wT_{(12)^b} + R \wT_{2 (12)^{1}} \wT_{(12)^b} +\ldots + R \wT_{2 (12)^{a-1}} \wT_{(12)^b}\right),$$
 which is equal to 
 $$[\wT_{2(12)^b}] R \left(\wT_{(12)^b} + R \wT_{(12)^{1}} \wT_{(12)^b} +\ldots + R \wT_{(12)^{a-2}} \wT_{(12)^b} + R \wT_{(12)^{a-1}} \wT_{(12)^b}\right).$$
 We can ignore the terms $\wT_{(12)^{k}} \wT_{(12)^b}$ with $k < m-2b$, and we get
 $$R \sum_{k=m-2b}^{a-1} [\wT_{2(12)^b}] \wT_{(12)^{k+b-m/2}} \wT_{(12)^{m/2}} = $$
 $$= \sum_{k=m-2b}^{a-1} \sum_j \binom{2b+1-m+2(k+b-m/2)-j-1}j R^{2b+1-m+2(k+b-m/2)-2j+1}.$$
 The coefficient at $R^{\ell-2m+2\ell(x)-2i} = R^{2a-2m+4b-2i}$ in this expression is
 $$\sum_{k=m-2b}^{a-1} \binom{4b+a+k-2m-i-1}{k+i-a+1} = \sum_{k=0}^{a+2b-m-1} \binom{2b+a-m-1-i+k}{4b+2a-2m-2i-2} = $$
 $$=\binom{4b+2a-2m-1-i} {4b+2a-2m-2i-1} = \binom{4b+2a-2m-1-i}{i} = \binom{\ell-2m+2\ell(x)-i-1}{i},$$
 as claimed. Note that we used \eqref{i1} for the sum of binomials in the calculation. The rest of the computations are similar and we leave them as an exercise for the reader. \qed
\end{skt}

Recall that there is one element of length $m$ and $0$, and two elements of every other length. The lemma therefore implies that if $w$ has even length $\ell \geq 2$, then the coefficient of $R^{\ell - 2j}$ in
$$\sum_{x \in I_2(m)} [\wT_x] \wT_w \wT_x$$
is equal to
$$2 \sum_{i=m-j}^{m-1} \binom{\ell-m+i-1-j}{j+i-m} + \binom{\ell-1-j}j = 2 \sum_{i=0}^{j-1} \binom{\ell-2j-1+i}{\ell-2j-1} + \binom{\ell-1-j}j,$$
which is by \eqref{i1} equal to
$$2 \binom{\ell-j-1}{j-1} + \binom{\ell-j-1}j = \frac{\ell}{\ell-j} \binom{\ell - j}{j}.$$
Therefore
$$\eta_0(T_w) = q^{\ell/2} \sum_{j=0}^{\ell/2-1} \frac{\ell}{\ell-j} \binom{\ell-j}{j} (q^{1/2}-q^{-1/})^{\ell-2j} = \sum_{j=0}^{\ell/2-1} \frac{\ell}{\ell-j} \binom{\ell-j}{j} q^j (q-1)^{\ell-2j}.$$
The calculation for odd length $\ell$ and the coefficient of $R^{\ell - 2j}$ is completely analogous for $j=0,\ldots,(\ell-3)/2$, and the coefficient of $R^1$ in 
$$\sum_{x \in I_2(m)} [\wT_x] \wT_w \wT_x$$
is equal to
$$\left(m-\frac{\ell-1}2 - \frac{\ell+1}2 \right) \cdot 1 + \left(m - \left(m - \frac{\ell-1}2\right)\right) \cdot 2 + 1 = m.$$
Therefore 
$$\eta_0(T_w) = q^{\ell/2} \left(mR + \sum_{j=0}^{(\ell-3)/2} \frac{\ell}{\ell-j} \binom{\ell-j}{j} (q^{1/2}-q^{-1/})^{\ell-2j}\right) = $$
$$= m q^{(\ell+1)/2} - m q^{(\ell-1)/2} + \sum_{j=0}^{(\ell-3)/2} \frac{\ell}{\ell-j} \binom{\ell-j}{j} q^j (q-1)^{\ell-2j}.$$
 
The sign character is identical to the trivial character on the trivial subalgebra, so $\epsilon_0 = \eta_0$.

\medskip

For induction from the subalgebra $H_1$, note first that $X_1^I$, the set of minimal length coset representatives of the subgroup $\set{e,1}$, consists of $e$ and all $x$ whose reduced word representation ends with $2$, except for the longest element.

\begin{lemma} \label{i4}
 If $w \in I_2(m)$ has even length $\ell \geq 2$ and $x \in X_1^I$, then
 $$[\wT_{x1}] \wT_w \wT_x = \sum_{k} \binom{\ell-2m+2\ell(x)-k}{k} R^{\ell-2m+2\ell(x)+1-2k}.$$
 Now choose $w \in I_2(m)$ with odd length $\ell$. If $m$ is even, $w \sim 1$ and $\ell(x) \in \set{(\ell-1)/2,m-(\ell+1)/2}$, or $m$ is odd, $w \sim 1$ and $\ell(x) = (\ell-1)/2$, or $m$ is odd, $w \sim 2$ and $\ell(x) = m - (\ell+1)/2$, then $[\wT_{x1}] \wT_w \wT_x = 1$. Otherwise,
 $$[\wT_{x1}] \wT_w \wT_x = \sum_{k} \binom{\ell-2m+2\ell(x)-k}{k} R^{\ell-2m+2\ell(x)+1-2k}.\eqno \qed$$
\end{lemma}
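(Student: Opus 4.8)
The plan is to imitate the proof of Lemma \ref{i3} almost verbatim: a finite case analysis on the shapes of $w$ and $x$, with every case reduced to one application of Lemma \ref{i2} followed by one application of the binomial identity \eqref{i1}. The one genuinely new point is that we read off the coefficient of $\wT_{x1}$ rather than of $\wT_x$.

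First I would record the structure of the index set. Apart from $e$, the elements of $X_1^I$ are exactly the non-longest elements whose unique reduced word ends in the generator $2$; hence $\ell(x1)=\ell(x)+1$, and $x1$ is the element of length $\ell(x)+1$ whose reduced word ends in $1$. I would then split $w$ according to its length $\ell$ and the generator its reduced word begins with — so $w\in\set{(12)^a,(21)^a}$ when $\ell=2a$ and $w\in\set{1(21)^a,2(12)^a}$ when $\ell=2a+1$ — and split $x$ similarly, separating the parities of $m$. This produces the same bounded list of shapes as in the proof of Lemma \ref{i3}.

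For each shape I would rewrite $\wT_w\wT_x$ so that the block $\wT_{(12)^{m/2}}$ ($m$ even) or $\wT_{1(21)^{(m-1)/2}}$ ($m$ odd) appears, apply Lemma \ref{i2} to expand it into a sum of $\wT_v$'s with explicit binomial coefficients, and read off the coefficient of the element $x1$; since $x1$ lies one step past $x$ along the dihedral chain, its coefficient carries an extra factor of $R$ and a shifted binomial argument relative to the Lemma \ref{i3} computation, yielding the summand $\binom{\ell-2m+2\ell(x)-k}{k}R^{\ell-2m+2\ell(x)+1-2k}$. When $\wT_w\wT_x$ must be pushed through several intermediate steps (the $\wT_{(21)^a}\wT_{(12)^b}$-type expansions used in Lemma \ref{i3}), the resulting sum of products of binomials telescopes via \eqref{i1} exactly as there. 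The cases of odd $\ell$ with $\ell(x)$ in the distinguished range $(\ell+1)/2\le\ell(x)<m-(\ell-1)/2$, i.e.\ those where $\wT_w\wT_x$ does not yet wrap around $I_2(m)$, I would treat by direct multiplication: when the leading generator of $w$ and the structure of $x1$ are compatible in the manner encoded by $w\sim1$ (or $w\sim2$) together with one of the two distinguished values of $\ell(x)$, exactly one term $\wT_{x1}$ survives with coefficient $1$; in all other such cases the coefficient of $\wT_{x1}$ is $0$ while the displayed sum is empty, so the two agree.

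I expect the main obstacle to be purely organizational: keeping the numerous cases straight — leading generator of $w$, parity of $m$, conjugacy class of $w$, and the two boundary values of $\ell(x)$ — and matching each precisely to the stated formula, in particular pinning down the three special cases in which the answer is the bare constant $1$ and verifying that the $R$-exponent shift is consistent throughout. The algebraic heart of each case (one use of Lemma \ref{i2}, one use of \eqref{i1}) is routine, just as in Lemma \ref{i3}; the difficulty lies only in the bookkeeping of edge cases.
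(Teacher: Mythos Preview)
Your proposal is correct and is essentially the paper's own approach: the paper does not write out a separate proof of Lemma \ref{i4} but simply notes that it is almost identical to the proof of Lemma \ref{i3}, which is precisely the case-by-case reduction to Lemma \ref{i2} and the identity \eqref{i1} that you describe. One small point of caution: the exceptional values of $\ell(x)$ in the odd-$\ell$ part of Lemma \ref{i4} are the two isolated endpoints $(\ell-1)/2$ and $m-(\ell+1)/2$, not an interval as in Lemma \ref{i3}, so be careful not to conflate the two when organizing the edge cases.
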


The proof of this lemma is almost identical to the proof of Lemma \ref{i3}.

\medskip

Now we can finish the proof of the theorem.

\medskip

First suppose that $\ell(w)$ is even. If $w = e$, we clearly have $\wT_e \wT_x = \wT_x$ for every $x$ in the transversal, so $\eta(e) = \epsilon(e) = m$. For $\ell(w) \geq 2$, we have
$$\eta_1(\wT_w) = \sum_{x \in X_1^I} \left([\wT_{x}] \wT_w \wT_x + q^{1/2} [\wT_{x1}] \wT_w \wT_x \right) = $$
$$ = \sum_{i=0}^{m-1}  \left( \sum_{k}{\textstyle \binom{\ell-2m+2i-1-k}k} R^{\ell-2m+2i-2k}  + q^{1/2}\sum_k \textstyle{\binom{\ell-2m+2i-k}{k}}R^{\ell-2m+2i+1-2k}\right)$$
by Lemmas \ref{i3} and \ref{i4}. The coefficient at $R^{2j}$ for $j \geq 2$ is
$$\sum_{i=0}^{m-1} \binom{\ell-2m+2i-1-(\ell/2-m+i-j)}{\ell/2-m+i-j} = \sum_{i=0}^{m-1} \binom{\ell/2-m+i+j-1}{2j-1},$$
which is $\binom{\ell/2+j-1}{2j}$ by \eqref{i1}. Similarly, the coefficient of $R^{2j-1}$ for $j \geq 1$ is
$$\sum_{i=0}^{m-1} q^{1/2} \binom{\ell-2m+2i-(\ell/2-m+i+1-j)}{\ell/2-m+i+1-j} = q^{1/2} \sum_{i=0}^{m-1}  \binom{\ell/2-m+i+j-1}{2j-2},$$
which is equal to $q^{1/2} \binom{\ell/2+j-1}{2j-1}$ by \eqref{i1}. To finish the proof in this case, use $\eta_1(T_w) = q^{\ell/2} \eta_1(\wT_w)$. The proof for $\epsilon_1$, $\eta_2$ and $\epsilon_2$ is analogous.

\medskip

Take $\ell(w)$ odd, $m$ odd. Again, 
$$\eta_1(\wT_w) = \sum_{x \in X_1^I} \left([\wT_{x}] \wT_w \wT_x + q^{1/2} [\wT_{x1}] \wT_w \wT_x \right).$$
Note that there are $m-\ell$ elements of $X_1^I$ satisfying $(\ell+1)/2 \leq \ell(x) < m - (\ell-1)/2$, half of which start with the same generator as $w$. Furthermore, there is exactly one $x \in X_1^I$ with $\ell(x) < m - (\ell-1)/2$ and $[\wT_{x1}] \wT_w \wT_x = 1$. Therefore $\eta_1(\wT_w)$ is equal to 
$$\sum_{i=0}^{m-1}  \left( \sum_{k} {\textstyle \binom{\ell-2m+2i-1-k}{k}} R^{\ell-2m+2i-2k} + q^{1/2}\sum_{k} \textstyle{\binom{\ell-2m+2i-k}{k}} R^{\ell-2m+2i+1-2k}\right) + q^{1/2} + \frac{m-\ell}{2}R.$$
The coefficient at $R^{2j}$ for $j \geq 2$ is
$$\sum_{i=0}^{m-1} q^{1/2} \binom{\ell-2m+2i-((\ell+1)/2-m+i-j)}{(\ell+1)/2-m+i-j} = q^{1/2}\sum_{i=0}^{m-1}\binom{(\ell-1)/2-m+i+j}{2j-1},$$
which is $q^{1/2} \binom{(\ell-1)/2+j}{2j}$. Similarly, the coefficient at $R^{2j-1}$ for $j \geq 2$ is
$$\sum_{i=0}^{m-1}\binom{\ell-2m+2i-1-((\ell+1)/2-m+i-j)}{(\ell+1)/2-m+i-j} = \sum_{i=0}^{m-1}\binom{(\ell-1)/2-m+i+j-1}{2j-2},$$
which is equal to $\binom{(\ell-1)/2+j-1}{2j-1}$. The coefficient at $R^0$ is $q^{1/2}$, and the coefficient at $R^1$ is
$$\frac{m-\ell}{2} + \sum_{i=0}^{m-1}\binom{\ell-2m+2i-1-((\ell+1)/2-m+i-1)}{(\ell+1)/2-m+i-1} = \frac{m-\ell}{2} + \frac{\ell-1}{2} = \frac{m-1}2$$
This, together with $\eta_1(T_w) = q^{\ell/2} \eta_1(\wT_w)$ and analogous computations for $\epsilon_1$, $\eta_2$ and $\epsilon_2$, finishes the proof.

\medskip

Suppose that $\ell(w)$ is odd, $m$ is even, $w \sim 1$. Again, there are $m-\ell$ elements of $X_1^I$ satisfying $(\ell+1)/2 \leq \ell(x) < m - (\ell-1)/2$, half of which start with the same generator as $w$. Furthermore, there are exactly two elements $X_1^I$ satisfying $[\wT_{x1}] \wT_w \wT_x = 1$. A similar calculation to the one above shows that the coefficient of $R^{2j}$ in $\eta_1(\wT_w)$ for $j \geq 2$ is $q^{1/2} \binom{(\ell-1)/2+j}{2j}$, the coefficient at $R^{2j-1}$ for $j \geq 2$ is $\binom{(\ell-1)/2+j-1}{2j-1}$, the coefficient at $R^0$ is $2q^{1/2}$, and the coefficient at $R^1$ is $(m-1)/2$. This, together with $\eta_1(T_w) = q^{\ell/2} \eta_1(\wT_w)$ and analogous computations for $\epsilon_1$, $\eta_2$ and $\epsilon_2$, finishes the proof.

\medskip

Finally, choose $\ell(w)$ odd, $m$ even, $w \sim 2$. All the coefficients are the same as in the previous case, except the one at $R^0$, which is $0$.

\section{Concluding remarks} \label{remarks}

We gave a complete description of \emph{all} values of characters of representations induced from \emph{all} parabolic subalgebras of $H_m^I$. In this sense, our results in case I are complete (note, though, that there are not enough parabolic subalgebras to use results on characters induced from them to describe all irreducible characters of $H_m^I$).

\medskip

Theorems \ref{thm1} and \ref{thm5} give combinatorial descriptions of $\eta_\lambda(T_w)$ and $\epsilon_\lambda(T_w)$ only for parabolic elements $w$. This is not a serious flaw, since we can use Theorem \ref{intro7} to find the remaining values. Indeed, if $w(i) \leq i+1$ for all $i$, then $w$ is parabolic and we can use Theorem \ref{thm1} or \ref{thm5}. Otherwise, take the smallest $i$ such that $w(i) = j+1 > i+1$. Then $\ell(s_j w) < \ell(w)$ and $\ell(s_j w s_j) \leq \ell(w)$; by Theorem \ref{intro7}, $\chi(T_w)$ can be expressed in terms of $\chi(T_{s_jw})$ and $\chi(T_{s_jws_j})$, and we can find the values for every $w$ by induction on $\ell(w)$.

\medskip

On the other hand, our results for types B and D are not complete, since in $B_n$ and $D_n$, not every element is conjugate to a parabolic element. For example, any conjugate of $t s_1 t s_1 = (1)^- (2)^-$ contains at least two copies of $t$ in any reduced expression, and is therefore not parabolic. It would be very interesting to find an extension of Theorems \ref{thm2} and \ref{thm3} that would deal with such elements. The result will still be a sum over integer sequences described in these theorems, but we will \emph{not} have a weight of the form $q^a (q-1)^b$ for all sequences.

\medskip

It is, however, possible to extend the theorems in cases B and D to all signed compositions $\lambda$. The formulas get more complicated, and we only state the case B and do not give a proof. 

\medskip

Recall that every signed composition of $n$ has a corresponding subgroup of $\f S_{-n}$ which is naturally isomorphic to $\f S_{\lambda_1} \times \f S_{\lambda_2} \times \cdots \times \f S_{\lambda_p}$; call this subgroup the \emph{quasi-parabolic subgroup corresponding to $\lambda$}, and denote it by $\f S_\lambda^B$.

\begin{thm}
 Given a signed composition $\lambda = (\lambda_1,\ldots,\lambda_p) \vdash n$, denote by $\eta_\lambda$ (respectively, $\epsilon_\lambda$) the character of the representation of $H_n^B$ induced from the trivial (respectively, sign) representation of the quasi-parabolic subalgebra $H_\lambda$. For a parabolic element $w$ of $\f S_{-n}$ of type $K$, we have
 $$\eta_\lambda(\wT_{w}) = \sum_a q^{e_K(a) + g_K(a)} (q-1)^{d_K(a) + f_K(a)}$$
 and
 $$\epsilon_\lambda(\wT_{w}) = \sum_a (-1)^{e_K(a) + g_K(a)} (q-1)^{d_K(a) + f_K(a)},$$
 where the sums are over all integer sequences $a = a_1 a_2 \cdots a_n$ satisfying
 \begin{enumerate}
  \item $1 \leq |a_i| \leq p$ for $i = 1,\ldots, n$,
  \item $\#\set{i \colon |a_i| = k} = |\lambda_k|$ for $k = 1,\ldots,p$,
  \item if $\lambda_k > 0$, then $a_i \neq -i$ for all $i = 1,\ldots,n$,
  \item if $s_i \in K$, then $a_i \geq a_{i+1}$, for $i = 1,\ldots,n-1$,
  \item if $t \in K$, then either $a_1<0$, or $a_1>0$ and $\lambda_{a_1} < 0$.
 \end{enumerate}
 and where
 \begin{itemize}
  \item $d_K(a)$ is the number of elements in the set $\set{i \colon s_i \in K, a_i > a_{i+1}}$,
  \item $e_K(a)$ is the number of elements in the set $\set{i \colon s_i \in K, a_i = a_{i+1}}$,
  \item $f_K(a)$ is $1$ if $t \in K$ and $a_1 < 0$, and $0$ otherwise.
  \item $g_K(a)$ is $1 + 2 (|\lambda_1| + \ldots + |\lambda_{a_1-1}|)$ if $t \in K$ and $a_1>0$, and $0$ otherwise,
 \end{itemize}
\end{thm}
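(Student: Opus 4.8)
The plan is to run the proof of Theorem \ref{thm2} from Section \ref{bd} essentially line by line, replacing ``parabolic'' by ``quasi-parabolic'' and adapting its four ingredients to an arbitrary signed composition $\lambda=(\lambda_1,\dots,\lambda_p)$. Write $P_k=|\lambda_1|+\dots+|\lambda_{k-1}|+1$ for the first position of the $k$-th block. First I would set up the coset combinatorics: take $\f S_\lambda^B$ to be the quasi-parabolic subgroup and $X_\lambda^B$ the set of signed permutations that are increasing on each positive block and positive-and-increasing on each negative block. The characterization of $X_\lambda^B$, and the bijection $\Phi_\lambda^B$ from cosets $x\f S_\lambda^B$ to integer sequences $a$ satisfying (1)--(3), carry over verbatim from Section \ref{bd}; the only change is that the ``identify slot $k$ with slot $-k$'' step in the construction of $\Phi_\lambda^B$ is now carried out for \emph{every} negative block, not just the first, so (1)--(3) simply describe the domain of $\Phi_\lambda^B$. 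I would also isolate at the outset the numerology that drives the new feature: the sign-flip $(j)^-$ has length $2j-1$, so if $x\in X_\lambda^B$ carries the value $1$ into a negative block $a_1$ --- necessarily into its first position $P_{a_1}$, by the increasing condition --- then $x^{-1}tx=(P_{a_1})^-$ has length $1+2(|\lambda_1|+\dots+|\lambda_{a_1-1}|)=g_K(a)$.

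The crucial structural point, and the reason $g_K(a)$ is no longer bounded by $1$, is that when $\lambda_k<0$ for some $k\ge 2$ the group $\f S_\lambda^B$ is a reflection subgroup of $\f S_{-n}$ which is \emph{not} standard parabolic: its sign-flip generators $(P_k)^-$ do not lie in $S_n^B$, the spanning set $\{\wT_w:w\in\f S_\lambda^B\}$ is not closed under multiplication, and minimal coset representatives fail the length-additivity property $\ell(xu)=\ell(x)+\ell(u)$ for all $u\in\f S_\lambda^B$. So the first real task is to make precise the quasi-parabolic subalgebra $H_\lambda$, and to prove that $H_n^B$ is free over it with a basis compatible with $X_\lambda^B$, so that an analogue of Lemma \ref{thm6} holds. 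A workable definition uses, for each negative block $k$, a conjugated generator $\tau_k=\wT_{z_k}\wT_t\wT_{z_k}^{-1}$ (with $z_k$ a fixed minimal-length permutation bringing block $k$ to the front); the immediate computation $\tau_k^2=\wT_{z_k}\wT_t^2\wT_{z_k}^{-1}=\wT_{z_k}(R\wT_t+1)\wT_{z_k}^{-1}=R\tau_k+1$ (together with the $B$-type braid relations, checked similarly) shows $H_\lambda$ is a genuine subalgebra, abstractly the Hecke algebra of $\f S_{\lambda_1}\times\dots\times\f S_{\lambda_p}$. Pinning down the correct basis of the induced module --- and hence the exact weight with which the trivial character sees the surviving sign-flip at block $a_1$, which must be $q^{\ell((P_{a_1})^-)/2}=q^{g_K(a)/2}$ for the stated formula to come out --- is the delicate bookkeeping here.

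The remaining ingredients are as in Section \ref{bd}. The analogue of Lemma \ref{bd1} (hence of \eqref{bd2}, with $\sim_\lambda$ replacing $\sim_J$) is proved exactly as in types A and B; the only new observation is that for $s=t$ the hypothesis $x\not\sim_\lambda tx$ now forces the value $1$ to lie in a \emph{positive} block of $x$ (flipping a value inside a negative block does not change the coset), after which the type-B reasoning --- $t\notin\rw(v)$ means $v$ introduces no sign changes, so $vtx$ still has a negative entry wherever $tx$ does --- applies unchanged. One then runs the same case analysis: if some $s_i\in K$ has $a_i<a_{i+1}$, or $t\in K$ and $1$ sits in a positive block of $x$ (i.e.\ (5) fails), then $\wP_{x,\lambda}(\wT_w\wT_x)=0$; otherwise one proves by induction on $\#K$ that $\wP_{x,\lambda}(\wT_w\wT_x)=R^{d_K(a)+f_K(a)}\wT_{w_{x,\lambda}x}$, where $w_{x,\lambda}x\sim_\lambda x$ and $w_{x,\lambda}$ is the product of the surviving within-block simple reflections together with the sign-flip at block $a_1$ when $t\in K$ and $a_1>0$. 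The single inductive step not already present in Section \ref{bd} is $s=t$ with $a_1>1$: prepending $\wT_t$ flips the value $1$ inside the negative block $a_1$, so the projection stays in the coset of $x$, and the surviving generator is the length-$g_K(a)$ element $(P_{a_1})^-$ rather than a length-$1$ copy of $t$ (the case $a_1=1$ is the parabolic one already treated in Section \ref{bd}, where $g_K(a)=1$). Summing over $x$ via Lemma \ref{thm6} produces $\eta_\lambda(\wT_w)=\sum_a q^{e_K(a)+g_K(a)}(q-1)^{d_K(a)+f_K(a)}$, and the computation for $\epsilon_\lambda$ is identical with the trivial character replaced by the sign character.

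I expect the main obstacle to be the second step: getting the definition of $H_\lambda$ and the basis of the induced $H_n^B$-module exactly right when $\lambda$ has a negative block away from the front, and especially when it has several negative blocks --- in that case $\f S_\lambda^B$ is not conjugate to any parabolic subgroup at all, so one cannot reduce to the parabolic statement, and conversely the ``obvious'' reductions (collapsing cosets to their minimal representatives, or conjugating a parabolic subalgebra by $\wT_{z_k}$) tend to reproduce the old $g_K\in\{0,1\}$ rather than the new value. Everything downstream is a lengthier but routine rerun of the type-B argument; the one genuinely new phenomenon is the weight $q^{g_K(a)}$ of the displaced sign-flip, whose appearance is forced by the length formula $\ell((j)^-)=2j-1$ evaluated at $j=P_{a_1}$.
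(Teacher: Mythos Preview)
The paper does not prove this theorem: Section~\ref{remarks} says explicitly ``we only state the case B and do not give a proof.'' So there is nothing to compare your proposal against; you are filling a gap the author left open.

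Your outline is the natural extension of the Section~\ref{bd} argument and correctly isolates both the new phenomenon (the displaced sign-flip contributing $q^{g_K(a)/2}$ via $\ell((P_{a_1})^-)=2P_{a_1}-1$) and the real obstruction (defining $H_\lambda$ and a basis of the induced module so that an analogue of Lemma~\ref{thm6} holds). The coset bijection $\Phi_\lambda^B$, the analogue of Lemma~\ref{bd1}, the vanishing cases, and the inductive computation of $\wP_{x,\lambda}(\wT_w\wT_x)$ all go through essentially as you describe.

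The point I would push on is the one you yourself flag. You end the induction with $\wP_{x,\lambda}(\wT_w\wT_x)=R^{d_K(a)+f_K(a)}\wT_{w_{x,\lambda}x}$, where $w_{x,\lambda}x=xu$ for some $u\in\f S_\lambda^B$ containing the sign-flip $(P_{a_1})^-$. But $\ell(xu)=\ell(x)+1$ while $\ell(x)+\ell(u)=\ell(x)+g_K(a)$, so $\wT_{xu}\neq\wT_x\wT_u$, and the span of $\{\wT_u:u\in\f S_\lambda^B\}$ is not a subalgebra. Your conjugated generators $\tau_k=\wT_{z_k}\wT_t\wT_{z_k}^{-1}$ do give an honest subalgebra isomorphic to the Hecke algebra of $\f S_\lambda^B$, but its natural basis is not $\{\wT_u\}$, and the trace formula of Lemma~\ref{thm6} must be rewritten in that basis before you can read off the power of $q$. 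Your instinct that this mismatch of exactly $g_K(a)-1$ in the length is what produces $q^{g_K(a)}$ rather than $q^1$ is right, but the proposal stops short of actually carrying out that bookkeeping, and that computation --- not the downstream case analysis --- is the heart of the proof.
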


Furthermore, it would be interesting to extend the analysis of the second part of Section \ref{a} to types B and D. A quantum Murnaghan-Nakayama rule of type B and D (or something similar) would be needed for this purpose.

\medskip

A more ambitious project would find combinatorial descriptions for a Hecke algebra belonging to \emph{any (finite irreducible) Coxeter group}. The analogue of Lemma \ref{a1} should be true in general. However, it is far from clear what a common combinatorial description of Theorems \ref{thm1}, \ref{thm2}, \ref{thm3} and \ref{thm4} would be.

\medskip

In the preparation of this paper, an error was found in \cite[\S 8]{RamFrob}. The entry for $\lambda=321$ and $\mu=222$ should be $2q^3 - 6q^2 + 6q - 2$, not $q^3 - 5q^2 + 5q - 1$.

\section*{Acknowledgments}

This paper is a byproduct of a collaboration with Mark Skandera \cite{konska}. The author is grateful for helpful suggestions and comments. Special thanks also go to Arun Ram.

\end{document}